\documentclass[11pt,a4paper]{amsart}
\usepackage[T1]{fontenc}
\usepackage[utf8]{inputenc}
\usepackage[english]{babel}
\usepackage{lmodern}
\usepackage[margin=1in]{geometry}
\usepackage{amssymb,mathtools}
\usepackage{cite}
\usepackage{microtype}
\usepackage{enumitem}
\usepackage{tikz}
\usetikzlibrary{arrows.meta,calc}
\usepackage[hidelinks,unicode]{hyperref}
\usepackage{bookmark}

\newtheorem{theorem}{Theorem}[section]
\newtheorem{lemma}[theorem]{Lemma}
\newtheorem{proposition}[theorem]{Proposition}
\newtheorem{corollary}[theorem]{Corollary}
\theoremstyle{definition}

\newtheorem{example}[theorem]{Example}
\newtheorem{problem}[theorem]{Problem}
\theoremstyle{remark}
\newtheorem{remark}[theorem]{Remark}
\numberwithin{equation}{section}

\newcommand{\R}{\mathbb R}
\newcommand{\Comp}{\operatorname{Comp}}
\newcommand{\col}{\operatorname{col}}
\newcommand{\supp}{\operatorname{supp}}
\newcommand{\MMS}{\operatorname{MMS}}
\newcommand{\mms}{\operatorname{mms}}
\newcommand{\poly}{\operatorname{poly}}
\DeclareMathOperator*{\argmin}{arg\,min}
\newcommand{\CC}[1]{\mathcal C_{#1}}
\newcommand{\inn}[1]{E_Q(#1)}
\newcommand{\bd}[1]{\partial_Q #1}
\newcommand{\ind}[1]{\mathbf 1[#1]}
\newcommand{\doi}[1]{\href{https://doi.org/#1}{\texttt{doi:}\nolinkurl{#1}}}
\setlist[enumerate]{label=\textup{(\roman*)},leftmargin=2.1em,itemsep=2pt}
\newcommand{\parhead}[1]{\par\smallskip\noindent\textit{#1}\enspace}
\title[Rainbow partitions of trees]{A rainbow partition theorem for trees\texorpdfstring{\\}{ }and connected maximin share allocations of chores}

\author[M. Anholcer]{Marcin Anholcer}
\address{Institute of Informatics and Quantitative Economics, Pozna\'n University of Economics and Business, Pozna\'n, Poland}
\email{marcin.anholcer@ue.poznan.pl}

\author[M. Bartkowiak]{Maciej Bartkowiak}
\address{Doctoral School, Pozna\'n University of Economics and Business, Pozna\'n, Poland}
\email{maciej.bartkowiak@ue.poznan.pl}

\author[B. Bosek]{Bart\l omiej Bosek}
\address{Institute of Theoretical Computer Science, Faculty of Mathematics and Computer Science, Jagiellonian University, Krak\'ow, Poland}
\email{bartlomiej.bosek@uj.edu.pl}

\author[J. Grytczuk]{Jaros\l aw Grytczuk}
\address{Faculty of Mathematics and Information Science, Warsaw University of Technology, Warsaw, Poland}
\email{jaroslaw.grytczuk@pw.edu.pl}

\author[Z. Lonc]{Zbigniew Lonc}
\address{Faculty of Mathematics and Information Science, Warsaw University of Technology, Warsaw, Poland}
\email{zbigniew.lonc@pw.edu.pl}

\author[P. Rz\k{a}\.zewski]{Paweł Rz\k{a}\.zewski}
\address{Faculty of Mathematics and Information Science, Warsaw University of Technology, Warsaw, Poland}
\email{pawel.rzazewski@pw.edu.pl}

\thanks{This research was funded by the National Science Centre, Poland (grant number: 2023/51/B/HS4/00829). The last author was 
supported by the European Research Council (ERC) under the European Union’s Horizon 2020 research and innovation programme grant agreement number 948057.}
\subjclass[2020]{Primary 05C05, 91B32; Secondary 05C70, 05C15, 05D15, 68R10}
\keywords{Tree partition, rainbow partition, connected fair division, indivisible chores, maximin share, colourful KKM theorem, Sperner's lemma, cooperative colouring}
\date{}
\hypersetup{pdftitle={A rainbow partition theorem for trees and connected maximin share allocations of chores},pdfauthor={Marcin Anholcer, Maciej Bartkowiak, Bartłomiej Bosek, Jarosław Grytczuk},pdfsubject={Connected partitions of trees and fair allocation of indivisible chores},pdfkeywords={tree partition, rainbow partition, connected fair division, indivisible chores, maximin share, colourful KKM theorem}}

\begin{document}
\begin{abstract}
Xiao, Qiu, and Huang (AAMAS 2023) and independently Lonc (personal communication) asked whether indivisible chores located at the vertices of a tree can always be allocated to $n$ agents in connected bundles so that the cost of every agent is at most its connected maximin share; for goods, this is a theorem of Bouveret, Cechl\'arov\'a, Elkind, Igarashi, and Peters. We answer the question affirmatively, even for monotone costs. The answer follows from a combinatorial theorem: if $\mathcal P_1,\ldots,\mathcal P_k$ are partitions of the vertex set of a finite tree, each into at most $k$ connected parts, then the vertex set can be split into disjoint sets $B_1,\ldots,B_k$, some possibly empty, such that each nonempty $B_i$ is connected and contained in a part of $\mathcal P_i$. Equivalently, if each of $k$ colours occurs on at most $k-1$ edges of a tree, then the vertices can be partitioned into connected sets labelled by distinct colours, none containing an edge of its own colour; in particular, one can choose for every colour a component of the forest obtained by deleting that colour so that the chosen components cover the tree. The bound is already best possible for paths, and for additive costs, the theorem is equivalent to the fair-division statement. The proof reduces the problem to inward partitions of oriented trees, which we obtain from the colourful KKM theorem on a simplex of edge weights, using a leaf-elimination labelling that remains compatible when weights vanish. We also give an algorithm running in time $k^{O(k)}$ plus polynomial time.

\noindent \textbf{JEL Classification: C72, C78, D63.}

\noindent \textbf{MSC 2020 Classification: 91B32, 91A10, 05C05, 05C15.}
\end{abstract}
\maketitle

\section{Introduction}
\label{sec:intro}

Fair division of indivisible items under connectivity constraints models situations in which the items are arranged in a network, and every agent must receive a connected bundle: plots of land, offices along a corridor, or segments of a road network to be maintained by different contractors. In the model introduced by Bouveret, Cechl\'arov\'a, Elkind, Igarashi, and Peters~\cite{BouveretEtAl2017}, the items are the vertices of a graph, and every bundle must induce a connected subgraph; see~\cite{Suksompong2021} for a survey of this and other constraints in fair division. A central fairness benchmark is the maximin share (MMS) of Budish~\cite{Budish2011}, adapted to connectivity constraints in~\cite{BouveretEtAl2017}: the best guarantee that an agent can secure by partitioning the items into as many admissible (here, connected) bundles as there are agents and receiving the worst of them. For goods placed on a tree, Bouveret et~al.~\cite{BouveretEtAl2017} proved that an allocation into connected bundles giving every agent its maximin share always exists and can be computed in polynomial time. On cycles, such allocations need not exist~\cite{BouveretEtAl2017,LoncTruszczynski2020}.

For chores, that is, for items that impose costs, the situation on trees has been much less clear. If agent $i$ has cost function $\kappa_i$ and there are $n$ agents, the connected maximin share of agent $i$ is
\begin{equation}
 \MMS_i(T,n)=\min_{\mathcal P}\ \max_{P\in\mathcal P}\ \kappa_i(P),
 \label{eq:intro-mms}
\end{equation}
where $\mathcal P$ ranges over the partitions of the vertex set of the tree $T$ into at most $n$ connected parts. This is the smallest worst-bundle cost that the agent can guarantee by dividing the chores into feasible bundles itself. Xiao, Qiu, and Huang~\cite{XiaoEtAl2023,XiaoEtAl2023arXiv} observed that the last-diminisher argument behind the result for goods breaks down for chores: in its chore version, an agent enlarges the current bundle when it considers the bundle too light, and after such an enlargement, the rest of the tree may be disconnected~\cite[Section~3]{XiaoEtAl2023arXiv}. They introduced a group-satisfied method, in which several bundles are assigned to several agents at once, and proved that connected MMS allocations of chores exist, and can be computed in polynomial time, on trees of radius at most two (trees of depth at most three in their convention, in which a star has depth two) and on spiders, that is, on trees with exactly one vertex of degree at least three~\cite[Sections~5 and~6]{XiaoEtAl2023arXiv}. They wrote that ``it remains open whether MMS allocations of chores on trees always exist or not, which is a simple but annoying problem in chores allocation''~\cite[Abstract]{XiaoEtAl2023arXiv}, and in the concluding section of their paper they state their belief that such allocations always exist~\cite[Section~8]{XiaoEtAl2023arXiv}.

We confirm this belief. Connected MMS allocations of chores exist on every tree, for arbitrary monotone costs, and every agent can even be given a connected subset of one of its own benchmark parts (Theorem~\ref{thm:B}). This follows from a purely combinatorial theorem on partitions of trees, which is the main result of the paper.

\subsection{Main results}
\label{sec:results}

All graphs are finite. A set of vertices of a graph is \emph{connected} if it is nonempty and induces a connected subgraph; in particular, the empty set is not connected. A \emph{connected partition} of the vertex set is a partition into connected sets; as usual, the parts of a partition are nonempty. By contrast, an \emph{allocation} of a finite set $X$ to an index set $I$ is a family $(B_i)_{i\in I}$ of pairwise disjoint subsets of $X$, some of which may be empty, whose union is $X$. We write $[k]=\{1,\ldots,k\}$. For a graph $G$ and $F\subseteq E(G)$, we write $G-F$ for the graph obtained from $G$ by deleting the edges of $F$ but keeping all vertices, and $G[X]$ for the subgraph induced by $X\subseteq V(G)$; components of a graph are identified with their vertex sets.

\begin{theorem}[Rainbow partition theorem]
\label{thm:A}
Let $T$ be a finite tree, let $k\ge1$, and let $\mathcal P_1,\ldots,\mathcal P_k$ be connected partitions of $V(T)$ with $|\mathcal P_i|\le k$ for every $i\in[k]$. Then there is an allocation $(B_i)_{i\in[k]}$ of $V(T)$ such that every nonempty $B_i$ is connected and contained in a member of $\mathcal P_i$. If $|V(T)|\ge k$, the sets $B_i$ can be chosen nonempty.
\end{theorem}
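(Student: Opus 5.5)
We follow the route announced in the abstract. We reformulate the statement in terms of edge colours, reduce it to \emph{inward partitions} of oriented trees, and obtain those from the colourful KKM theorem applied to a simplex of edge weights.

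\parhead{Reformulation.} A connected partition $\mathcal P_i$ of $V(T)$ into at most $k$ parts is the same thing as a set $F_i\subseteq E(T)$ of at most $k-1$ edges; the parts are then the components of $T-F_i$. A connected set $B$ lies in a member of $\mathcal P_i$ exactly when $T[B]$ contains no edge of $F_i$.

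If $F_i$ has fewer than $k-1$ edges, we may add edges to it, since this only refines $\mathcal P_i$ and makes the conclusion harder. The same holds if $|E(T)|<k-1$, after first enlarging $T$ by attaching pendant paths and restricting afterwards: deleting leaves from a connected allocation keeps the parts connected. So we may assume $|F_i|=k-1$. An edge lying in several $F_i$ is treated as carrying several colours, and we allow a multigraph subdivision if that is convenient.

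\parhead{Oriented reduction.} We want disjoint connected $B_1,\dots,B_k$ covering $V(T)$ with $B_i$ avoiding $F_i$. Root $T$ and orient every edge. For each colour $i$ and each weight vector $x$ in the simplex $\Delta$ indexed by the edges of $T$, we define a labelling in which the edges carrying weight are the candidate cut edges. The idea is that $x$ prescribes how the cuts are distributed, and each agent $i$ points to a set of edges where it is willing to be cut.

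The key object is an \emph{inward partition}: we choose $k-1$ cut edges, and to each resulting component we assign a colour, all distinct. The component assigned colour $i$ must contain no edge of $F_i$. Equivalently, in orientation language, the components are oriented toward the cut edges they are incident to.

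\parhead{Leaf-elimination labelling.} Given $x$, repeatedly take a leaf edge of the weighted tree and contract or eliminate it according to a fixed rule. This produces, for each colour $i$, a closed set $C_i^e\subseteq\Delta$ for each edge $e$: namely the set of $x$ at which colour $i$ "accepts" $e$ as a boundary. We must check three things.
\begin{enumerate}
\item The sets $C_i^e$ are closed.
\item They cover $\Delta$ for each fixed $i$.
\item They satisfy the KKM boundary condition: on the face $x_e=0$, colour $i$ never labels $e$.
\end{enumerate}
Once these hold, the colourful KKM theorem of Gale gives a point $x^*$ and a bijection between colours and the $k$ "slots", meaning the $k-1$ edges plus the root component. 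The components determined by $x^*$ then receive distinct colours, each compatible with its own $F_i$. We build the allocation from this: the component labelled $i$ is contained in one part of $\mathcal P_i$.

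\parhead{Nonempty parts.} When $|V(T)|\ge k$ and some $B_i$ are empty, we repair locally. Take a nonempty $B_j$ with at least two vertices and split off a leaf of $T[B_j]$ into the empty $B_i$. A singleton is always connected and lies in some part of $\mathcal P_i$, and $B_j$ minus a leaf stays connected. Repeat until every $B_i$ is nonempty; this terminates by counting.

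\parhead{Main obstacle.} The main obstacle is designing the labelling so that it remains compatible as weights vanish. When $x_e\to0$, the leaf-elimination order changes, and the labels must degrade continuously, so that the closedness and boundary conditions both hold. We would first try a lexicographic rule: process edges in a fixed leaf order, and let a colour claim the current edge iff its cumulative weight threshold is met. We would then verify the boundary condition by induction on the number of edges.
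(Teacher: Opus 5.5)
Your reformulation and your repair for nonempty pieces are correct and agree with the paper (Proposition~\ref{prop:equivalence}, the saturation step of Lemma~\ref{lem:normalization}, Proposition~\ref{prop:nonempty}). The heart of the proof, however, is missing, and the KKM step as you set it up cannot work.

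\emph{The dimensions do not match.} Gale's theorem requires exactly as many KKM covers as the simplex has vertices. Your simplex is indexed by $E(T)$, which after saturation has $k(k-1)$ edges, but you have only $k$ colours. The ``$k$ slots'' ($k-1$ edges plus a root component) are not the vertex set of any simplex you defined. Even on a simplex with $k-1$ vertices, only $k-1$ colours can take part, and one index has to stay free (Remark~\ref{rem:C-sharp}(a)).

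\emph{Nothing forces the component of colour $i$ to avoid $F_i$.} The labelling is never defined, as you concede in your last paragraph. A single orientation obtained by rooting $T$ carries no information about the sets $F_i$. Your ``inward partition'' is just a restatement of the goal.

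\emph{Your conditions (i)--(iii) are not where the difficulty lies.} Take any labelling with $\lambda_i(x)\in\supp(x)$ and let $C^e_i$ be the closure of $\lambda_i^{-1}(e)$. Then closedness, covering and the KKM cover condition hold automatically, with no continuity and without your stronger condition (iii). The real problem is what can be extracted at the colourful point $x^*$, which lies only in the closures. Weights vanish there, components merge, and one must show that a merged component $Z$ receives at most $|Z|$ colours. Nor do you say how one colourful point would produce all $k$ pieces. Even in the paper's setting the weights at that point must in general vanish on some edges (Remark~\ref{rem:C-sharp}(b)), so the partition it determines is coarser than the final one.

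The paper fills exactly these gaps, in four steps.
\begin{enumerate}
\item It distinguishes a colour $0$ and contracts the components of $T-E_0$ to a quotient tree $Q$ on $k$ vertices, whose $k-1$ edges are the colour-$0$ edges.
\item For each colour $c\ne0$ it orients every edge of $Q$ towards the side on which colour $c$ has slack. Then every $\omega_c$-inward connected set $Z$ contains at most $|Z|-1$ edges of colour $c$ (Corollaries~\ref{cor:split} and~\ref{cor:integer}).
\item KKM is applied on $\Delta_{E(Q)}$ to these $k-1$ orientations, with colour $0$ as the free index. The reversal cost $\phi_w(\omega,\cdot)$ is minimised on whole components of $Q-\supp(w)$, and these are $\omega$-inward (Lemma~\ref{lem:min}). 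Every orientation gets the label $\Lambda(P,\supp(w))$, which depends only on its minimising part $P$ and on the support, through a leaf-elimination order. The compatibility lemma (Lemma~\ref{lem:compatibility}) confines all labels falling into a merged component $Z$ to a slot set of size $|Z|$. This gives injectivity in the limit (Theorem~\ref{thm:weights}).
\item The output is only a splitting of $Q$ into $r\ge2$ connected parts with colour groups of matching sizes. The rainbow partition is then completed by strong induction on $k$ inside each part, a step with no counterpart in your plan.
\end{enumerate}
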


We call such an allocation a \emph{rainbow partition subordinate to} $(\mathcal P_i)_{i\in[k]}$, and the sets $B_i$ its \emph{pieces}. Despite the name, some pieces may be empty; the nonempty pieces form a connected partition of $V(T)$. Each partition contributes one piece, and the piece of index $i$ must lie inside a part of its own partition $\mathcal P_i$, but need not respect the other partitions. Figure~\ref{fig:spider} shows an example. Theorem~\ref{thm:A} is not a statement about set systems alone: if the tree is replaced by a four-cycle, it fails already for $k=2$ (Section~\ref{sec:why}).

\begin{figure}[t]
\centering
\begin{tikzpicture}[scale=0.78,
  vtx/.style={circle,fill=black,inner sep=1.1pt},
  lab/.style={inner sep=0.6pt,font=\scriptsize},
  gone/.style={densely dashed,gray!80}]
\newcommand{\spidercoords}{%
  \coordinate (x) at (0,0);
  \coordinate (y1) at (90:1.15);  \coordinate (z1) at (90:2.3);
  \coordinate (y2) at (210:1.15); \coordinate (z2) at (210:2.3);
  \coordinate (y3) at (330:1.15); \coordinate (z3) at (330:2.3);}
\newcommand{\spidervertices}{%
  \foreach \v in {x,y1,z1,y2,z2,y3,z3} \node[vtx] at (\v) {};
  \node[font=\scriptsize,above right=0pt] at (x) {$x$};
  \node[font=\scriptsize,right=1pt] at (y1) {$y_1$}; \node[font=\scriptsize,right=1pt] at (z1) {$z_1$};
  \node[font=\scriptsize,below=1pt] at (y2) {$y_2$}; \node[font=\scriptsize,below=1pt] at (z2) {$z_2$};
  \node[font=\scriptsize,below=1pt] at (y3) {$y_3$}; \node[font=\scriptsize,below=1pt] at (z3) {$z_3$};}
\begin{scope}
\spidercoords
\draw[line width=13pt,line cap=round,red!22] (x)--(z2);
\draw[line width=13pt,line cap=round,blue!22] (y3)--(z3);
\draw[line width=13pt,line cap=round,green!30] (y1)--(z1);
\draw (x)--(y1) node[lab,midway,left=1pt]{$a$};
\draw (y1)--(z1) node[lab,midway,left=1pt]{$b$};
\draw (x)--(y2) node[lab,midway,above=1pt]{$b$};
\draw (y2)--(z2) node[lab,midway,above=1pt]{$c$};
\draw (x)--(y3) node[lab,midway,above=1pt]{$c$};
\draw (y3)--(z3) node[lab,midway,above=1pt]{$a$};
\spidervertices
\node at (0,-2.45) {\small (a)};
\end{scope}
\begin{scope}[xshift=5.0cm]
\spidercoords
\draw[gone] (x)--(y1); \draw (y1)--(z1); \draw (x)--(y2); \draw (y2)--(z2); \draw (x)--(y3); \draw[gone] (y3)--(z3);
\spidervertices
\node at (0,-2.45) {\small (b) $\mathcal P_a$};
\end{scope}
\begin{scope}[xshift=10.0cm]
\spidercoords
\draw (x)--(y1); \draw[gone] (y1)--(z1); \draw[gone] (x)--(y2); \draw (y2)--(z2); \draw (x)--(y3); \draw (y3)--(z3);
\spidervertices
\node at (0,-2.45) {\small (c) $\mathcal P_b$};
\end{scope}
\begin{scope}[xshift=15.0cm]
\spidercoords
\draw (x)--(y1); \draw (y1)--(z1); \draw (x)--(y2); \draw[gone] (y2)--(z2); \draw[gone] (x)--(y3); \draw (y3)--(z3);
\spidervertices
\node at (0,-2.45) {\small (d) $\mathcal P_c$};
\end{scope}
\end{tikzpicture}
\caption{(a) A tree $T$ whose edges carry colours $a,b,c$, each colour on two edges, together with the rainbow partition $B_a=\{x,y_2,z_2\}$, $B_b=\{y_3,z_3\}$, $B_c=\{y_1,z_1\}$ (shaded). (b)--(d) The connected partitions $\mathcal P_a,\mathcal P_b,\mathcal P_c$ formed by the components of $T-E_a$, $T-E_b$, $T-E_c$, where $E_\gamma$ denotes the set of edges of colour $\gamma$; the deleted edges are dashed. For every colour $\gamma\in\{a,b,c\}$, the set $B_\gamma$ is contained in a part of $\mathcal P_\gamma$, and $T[B_\gamma]$ contains no edge of colour $\gamma$.}
\label{fig:spider}
\end{figure}

A connected partition of a tree into $q$ parts is determined by the $q-1$ edges joining different parts, and conversely, deleting a set of edges from a tree yields the connected partition of its vertex set into the components of the resulting forest. This gives an equivalent formulation in terms of edge colourings in which an edge may carry several colours (Proposition~\ref{prop:equivalence}).

\begin{theorem}[Edge-colour form]
\label{thm:Aprime}
Let $T$ be a finite tree, let $k\ge1$, and let $E_1,\ldots,E_k\subseteq E(T)$, not necessarily disjoint, satisfy $|E_i|\le k-1$ for every $i\in[k]$. Then there is an allocation $(B_i)_{i\in[k]}$ of $V(T)$ such that every nonempty $B_i$ is connected and the induced subgraph $T[B_i]$ contains no edge of $E_i$. In particular, one can choose a component $K_i$ of $T-E_i$ for every $i\in[k]$ so that $K_1\cup\cdots\cup K_k=V(T)$.
\end{theorem}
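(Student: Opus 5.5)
We derive Theorem~\ref{thm:Aprime} from Theorem~\ref{thm:A}, which we may assume. The plan is to translate edge sets into connected partitions and back, and then obtain the covering statement from the rainbow partition.

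First, for each $i\in[k]$, let $\mathcal P_i$ be the set of components of $T-E_i$. Since $T$ is a tree, deleting $|E_i|$ edges yields a forest with exactly $|E_i|+1\le k$ components, each connected. So $\mathcal P_i$ is a connected partition of $V(T)$ with $|\mathcal P_i|\le k$. Theorem~\ref{thm:A} then gives an allocation $(B_i)_{i\in[k]}$ in which every nonempty $B_i$ is connected and lies inside some component $K$ of $T-E_i$.

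Next we check that $T[B_i]$ has no edge of $E_i$. Suppose $uv\in E_i$ with $u,v\in B_i\subseteq K$. Because $K$ is connected in $T-E_i$, there is a $u$--$v$ path in $T$ avoiding the edge $uv$. Together with $uv$ this path forms a cycle, which is impossible in a tree. So no such edge exists, and this is the only place where acyclicity is used beyond counting components.

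Finally, for the covering statement, let $K_i$ be the component of $T-E_i$ containing $B_i$ when $B_i\neq\emptyset$, and an arbitrary component otherwise. Then $\bigcup_i K_i\supseteq\bigcup_i B_i=V(T)$, so $K_1\cup\cdots\cup K_k=V(T)$.

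There is no real obstacle: all the substance lies in Theorem~\ref{thm:A}. The only points needing care are that the component count is $|E_i|+1$ even when the edge sets $E_i$ overlap, and the tree argument above showing that containment in a component of $T-E_i$ excludes edges of $E_i$.
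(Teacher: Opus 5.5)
Each step you write is correct, but the argument is circular. In this paper Theorem~\ref{thm:A} has no proof of its own: it is deduced \emph{from} Theorem~\ref{thm:Aprime} via Proposition~\ref{prop:equivalence}. That proposition shows the two statements are equivalent for every tree and every $k$. The reverse translation is just as easy as yours: let $E_i$ be the set of edges joining distinct members of $\mathcal P_i$, so that $|E_i|=|\mathcal P_i|-1\le k-1$. What you have written is essentially the easy direction of that proposition together with its covering remark. Your closing sentence, that all the substance lies in Theorem~\ref{thm:A}, locates the gap exactly: that substance is never supplied. Nothing in your proposal explains why the individual bounds $|E_i|\le k-1$ force a solution to exist. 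Note that a bound on $\sum_i|E_i|$ alone does not, by Proposition~\ref{prop:average}.

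The missing existence argument is the heart of the paper, and it proceeds by strong induction on $k$.
\begin{enumerate}
\item Normalise to a saturated instance (Lemma~\ref{lem:normalization}).
\item Fix one colour $0$ and contract the $k$ components of $T-E_0$, obtaining a quotient tree $Q$ on $k$ vertices.
\item Let $e_c(H)$ count the edges of colour $c\ne 0$ inside each contracted component $H$.
\item Find a connected partition $\{Z_1,\ldots,Z_r\}$ of $V(Q)$ with $r\ge2$, and a grouping of the colours into sets $G_j$ with $|G_j|=|Z_j|$, such that $e_c(Z_j)\le|Z_j|-1$ for all $c\in G_j\setminus\{0\}$.
\item Colour $0$ then automatically has exactly $|Z_j|-1$ edges in the preimage of $Z_j$. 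So each preimage carries an admissible instance with fewer than $k$ colours, and induction applies.
\end{enumerate}
Producing the splitting in step (iv) (Corollary~\ref{cor:integer}) is the genuinely hard part. It comes from the inward-partition Theorem~\ref{thm:C}, which rests on the common-weights Theorem~\ref{thm:weights}. That theorem is proved by applying Gale's colourful KKM theorem to a simplex of edge weights on $Q$, using the leaf-elimination labelling of Lemma~\ref{lem:compatibility}. Any valid proof needs an argument of comparable strength at this point. Translating between edge sets and partitions cannot substitute for it.
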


Thus, if each of $k$ colours occurs on at most $k-1$ edges of a tree, the vertices can be partitioned into connected pieces labelled by distinct colours so that no piece contains an edge of its own colour.

For the fair-division consequence, let the vertices of a tree $T$ be chores, and let $n$ agents have \emph{monotone costs} $\kappa_i\colon2^{V(T)}\to\R_{\ge0}$, that is, $\kappa_i(\varnothing)=0$ and $\kappa_i(X)\le\kappa_i(Y)$ whenever $X\subseteq Y$. Additive costs $\kappa_i(X)=\sum_{v\in X}\kappa_i(v)$, where $\kappa_i(v)=\kappa_i(\{v\})$, are the main example. Let $\Pi_n(T)$ be the set of connected partitions of $V(T)$ with at most $n$ parts, so that the minimum in~\eqref{eq:intro-mms} is taken over $\Pi_n(T)$. A \emph{connected allocation} is an allocation $(B_i)_{i\in[n]}$ of $V(T)$ in which every nonempty bundle $B_i$ is connected; it is an \emph{MMS allocation} if $\kappa_i(B_i)\le\MMS_i(T,n)$ for every $i\in[n]$.

\begin{theorem}[Connected MMS allocations of chores on trees]
\label{thm:B}
Let $T$ be a finite tree and let $n\ge1$.
\begin{enumerate}[label=\textup{(\alph*)}]
\item For all monotone costs $\kappa_1,\ldots,\kappa_n$ there is a connected MMS allocation. More strongly, if $\mathcal P_i\in\Pi_n(T)$ attains $\MMS_i(T,n)$ for every $i$, then there is a connected allocation in which every nonempty $B_i$ is contained in a member of $\mathcal P_i$. If $|V(T)|\ge n$, all bundles can be chosen nonempty.
\item Conversely, if connected MMS allocations exist for all additive costs $\kappa_1,\ldots,\kappa_n$ on $V(T)$, then the conclusion of Theorem~\ref{thm:A} holds for $T$ and $k=n$.
\end{enumerate}
\end{theorem}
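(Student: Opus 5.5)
Part (a) follows from Theorem~\ref{thm:A} with $k=n$. For every agent $i$ choose $\mathcal P_i\in\Pi_n(T)$ attaining the minimum in~\eqref{eq:intro-mms}; such a partition exists because $\Pi_n(T)$ is finite and nonempty (the trivial partition $\{V(T)\}$ belongs to it). Each $\mathcal P_i$ is a connected partition with $|\mathcal P_i|\le n$. Theorem~\ref{thm:A} then yields an allocation $(B_i)_{i\in[n]}$ in which every nonempty $B_i$ is connected and lies in some $P\in\mathcal P_i$, with all $B_i$ nonempty when $|V(T)|\ge n$. Monotonicity gives $\kappa_i(B_i)\le\kappa_i(P)\le\max_{Q\in\mathcal P_i}\kappa_i(Q)=\MMS_i(T,n)$ when $B_i$ is nonempty. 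When $B_i=\varnothing$ we have $\kappa_i(B_i)=0\le\MMS_i(T,n)$, since costs are nonnegative. This proves the stronger statement in (a), and with it the existence of a connected MMS allocation.

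For (b), let $\mathcal P_1,\dots,\mathcal P_n$ be connected partitions with $|\mathcal P_i|\le n$. The task is to build additive costs that force any MMS allocation to be subordinate to them. By Proposition~\ref{prop:equivalence}, or directly, $\mathcal P_i$ corresponds to a set $E_i$ of at most $n-1$ cut edges. Weights on vertices cannot see edges directly, so I would pick the costs so that $\mathcal P_i$ is essentially the unique minimiser and every connected set of cost at most $\MMS_i$ lies inside a part of $\mathcal P_i$.

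The construction I would try first pads $\mathcal P_i$ with singletons so that it has exactly $n$ parts; this works when $|V(T)|\ge n$, and otherwise $n$ agents can share the vertices trivially. Note that splitting a part does not break the subordination property, since sets inside a refined part lie inside the original part. Then assign costs so that every part of $\mathcal P_i$ has the same total cost $1$. Any connected set not contained in a single part of $\mathcal P_i$ must then cost more than $1$. The natural idea is to put a large weight on both endpoints of each cut edge, but this interacts with the equal-part-sum requirement.

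A cleaner route is to use a vertex of each part as a heavy representative. Choose costs such that every part $P$ has total cost $1$, and such that any connected set crossing a cut edge contains most of the weight of two parts. Concretely, for each cut edge $uv$ of $E_i$ put weight near $1/\deg$ on its endpoints and $\varepsilon$ on all other vertices, with the weights normalised so that each part sums to $1$. A part meeting several cut edges splits its weight among the corresponding endpoints, and a connected set crossing edge $uv$ would then contain $u$ and $v$ but not necessarily enough weight.

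This weighting is where I expect the difficulty. The fix I would attempt is a perturbation argument. The conclusion of (b) only asks for some allocation subordinate to the $\mathcal P_i$. So take costs $\kappa_i^{(m)}$ that are $1$ per part, spread within each part, and apply the hypothesis for each $m$. Because there are finitely many allocations, some allocation is MMS for infinitely many $m$. Passing to the limit, each nonempty $B_i$ then has limiting cost at most the limiting MMS.

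The remaining step is to show that a connected set crossing a boundary of $\mathcal P_i$ has strictly larger limiting cost. If that fails, the alternative is to first contract each part of each $\mathcal P_i$ where possible, and then argue directly with the Proposition~\ref{prop:equivalence} formulation. Settling this cost design, so that the MMS is exactly $1$ and is attained only by sets inside parts, is the main obstacle.
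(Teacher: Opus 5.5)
Your part (a) is correct and is the paper's argument. Part (b) has a genuine gap. The content of (b) is the construction of additive costs, and you never produce them.

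The claim that equal part sums force every connected set leaving a part to cost more than $1$ does not follow. A set such as $\{u,v\}$ for a cut edge $uv$ crosses the boundary while containing only a small share of either part. Your endpoint weighting fails for the reason you identify: a part with $d$ boundary edges can give each endpoint only about $1/d$ of its budget. The limiting argument does not remove the obstacle either. Fixing one allocation that is MMS for infinitely many $m$ says nothing about subordination unless the costs already make crossing sets strictly more expensive than the benchmark, which is the original problem. Passing to limits can only weaken strict inequalities to non-strict ones.

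The missing idea (the paper's Lemma~\ref{lem:costs}) is to break the symmetry by rooting. Root the quotient tree of the refined partition $\mathcal P_i'$, which has exactly $n$ parts, at a part $R$. Then every part $X\ne R$ has exactly one upward cut edge $u_Xv_X$ with $v_X\in X$. Put almost all of $X$'s budget on $v_X$ and a small surcharge on the parent-side endpoint $u_X$. Make the surcharge proportional to the number $m_X$ of parts in the subtree at $X$, so that the accounting telescopes.

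Concretely, with $\eta=1/(4n)$, add $1-\eta(2m_X-1)$ to $\kappa_i(v_X)$ and $2\eta m_X$ to $\kappa_i(u_X)$. Because the children $Y$ of $X$ satisfy $\sum_Y m_Y=m_X-1$, this gives:
\begin{itemize}
\item every part $X\ne R$ costs exactly $1-\eta$;
\item the root costs $2\eta(n-1)<1-\eta$;
\item every cut edge satisfies $\kappa_i(u_X)+\kappa_i(v_X)\ge1+\eta$.
\end{itemize}
Hence $\MMS_i(T,n)\le1-\eta$, as witnessed by $\mathcal P_i'$. A connected set meeting two parts contains both endpoints of some cut edge and so costs at least $1+\eta$. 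Therefore every nonempty bundle of an MMS allocation lies in a part of $\mathcal P_i'$, hence in a part of $\mathcal P_i$.

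Finally, the conclusion of Theorem~\ref{thm:A} also includes the nonempty-pieces clause. You need Proposition~\ref{prop:nonempty} for it, and your proposal does not address it.
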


Part~(b) shows that, for additive costs, the fair-division statement on a given tree is equivalent to Theorem~\ref{thm:A} on that tree; Theorem~\ref{thm:A} is therefore exactly the combinatorial content of the question of Xiao, Qiu, and Huang. The benchmark~\eqref{eq:intro-mms} coincides with theirs whenever $|V(T)|\ge n$ (Remark~\ref{rem:xqh}). Since it is defined by connected partitions, it is at least the unconstrained maximin share of chores, and Theorem~\ref{thm:B} says nothing about the unconstrained problem, in which exact MMS allocations of chores need not exist~\cite{AzizEtAl2017}.

The main tool in the proof of Theorem~\ref{thm:Aprime} is a statement about orientations. An \emph{orientation} $\omega$ of a tree $Q$ chooses one of the two directions of every edge; if $\omega$ directs an edge from $u$ to $v$, then $u$ is its \emph{tail}, $v$ is its \emph{head}, and the edge points towards $v$ and away from $u$. For $Z\subseteq V(Q)$, let $\bd Z$ be the set of edges with exactly one endpoint in $Z$. A connected set $Z$ is \emph{$\omega$-inward} if $\omega$ directs every edge of $\bd Z$ towards its endpoint in $Z$.

\begin{theorem}[Inward partitions of oriented trees]
\label{thm:C}
Let $Q$ be a tree with $s\ge2$ vertices, and let $\omega_1,\ldots,\omega_{s-1}$ be orientations of $Q$, not necessarily distinct. Then there are a connected partition $\{Z_1,\ldots,Z_r\}$ of $V(Q)$ with $r\ge2$ and a partition $\{0,1,\ldots,s-1\}=G_1\cup\cdots\cup G_r$ into pairwise disjoint sets such that $|G_j|=|Z_j|$ for every $j\in[r]$, and $Z_j$ is $\omega_c$-inward for every $j\in[r]$ and every $c\in G_j\setminus\{0\}$.
\end{theorem}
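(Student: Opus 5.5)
The plan is to follow the strategy announced in the abstract: apply the colourful KKM theorem on the simplex of edge weights of $Q$. Let $m=s-1=|E(Q)|$ and let $\Delta=\{x\in\R_{\ge0}^{E(Q)}:\sum_e x_e=1\}$, an $(m-1)$-dimensional simplex whose vertices are indexed by the $s-1$ edges. The colourful KKM theorem needs $m$ covers of $\Delta$, one per agent. We take these to be the orientations $\omega_1,\dots,\omega_{s-1}$, and we treat the extra index $0$ as a free ``dummy'' agent that imposes no inwardness constraint. For each $c\in[s-1]$ and each edge $e$, we define a closed set $C^c_e\subseteq\Delta$ so that the following holds. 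A point $x$ lies in $C^c_e$ when, in the leaf-elimination procedure run on the weighted tree $(Q,x)$ with orientation $\omega_c$, the edge $e$ is the one selected.

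The labelling we aim for works as follows. Edges of weight $0$ are contracted, so that $x$ determines a connected partition of $V(Q)$ into the components of the forest formed by the zero-weight edges. Repeatedly remove a leaf part, that is, a part incident to exactly one positive edge, with the positive edge towards it oriented by $\omega_c$ into the part. Agent $c$ then labels $x$ by the edge through which the elimination first gets ``stuck'', or by a canonically chosen positive edge pointing into an inward part. The key point is to arrange this so that $C^c_e\subseteq\{x: x_e>0\}$ fails only on the boundary in a compatible way. Concretely, the KKM covering condition requires that every face $\{x:x_e=0 \text{ for } e\notin S\}$ be covered by $\bigcup_{e\in S}C^c_e$. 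Choosing the label among positive-weight edges gives exactly this, and closedness is obtained by taking closures of the regions, or by a standard limiting argument.

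The colourful KKM theorem then produces a point $x^*$ and a bijection $\pi:[s-1]\to E(Q)$ with $x^*\in C^c_{\pi(c)}$ for all $c$. Let $Z_1,\dots,Z_r$ be the components of $Q$ after deleting the edges of positive weight at $x^*$. Since $x^*\ne0$, at least one edge is cut, so $r\ge2$. We assign each agent $c$ to a part as follows. The edge $\pi(c)$ certifies, via the leaf-elimination semantics, a part $Z_j$ that is $\omega_c$-inward, and we put $c$ in $G_j$. Counting is then done by a Hall-type argument along the elimination order. A tree on $r$ parts has $r-1$ positive edges, while the zero edges inside $Z_j$ number $|Z_j|-1$. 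Each part must therefore receive $|Z_j|-1$ agents from its internal edges plus the agents matched to cut edges, and the cut edges are distributed one per non-root part in the rooted structure. Adding agent $0$ to the one part that receives no cut edge gives $|G_j|=|Z_j|$ for every $j$.

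The main obstacle is designing the leaf-elimination labelling so that three things hold together: the labels remain compatible when weights vanish (the KKM boundary condition), labels given by internal, zero-weight edges still certify inwardness of the part containing them, and the resulting part sizes match exactly $|Z_j|$. My first attempt will be a fixed global rule. Orient the contracted quotient tree by eliminating leaves in a canonical order determined by $x$, for instance by the smallest weight with ties broken by edge index. Each agent $c$ is then labelled by the first eliminated part that is $\omega_c$-inward, with the edge used to reach it serving as the label. If exact counting fails, I will fall back on a degree or Sperner-type parity argument on a triangulation of $\Delta$ to produce the matching directly.
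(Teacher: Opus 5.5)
Your framework is the same as the paper's: Gale's colourful KKM theorem on the simplex $\Delta_D$ of edge weights, parts taken as the components of the zero-weight edges at the KKM point $x^*$, and index $0$ as a free slot. But the labelling, which you yourself call the main obstacle, is never defined, and the argument needs two properties that the proposal does not supply.

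\textbf{A certificate that survives the limit.} Since $x^*$ only lies in the closure of a label region, everything you know about agent $c$ comes from nearby points $x$ with support $J\supseteq F=\supp(x^*)$. The $J$-part $P$ certified there merges into an $F$-part $Z\supseteq P$, and inwardness is not inherited by $Z$. Take the path $a\,b\,c\,d$ with $a\to b$, $c\to b$, $c\to d$. The part $\{b\}$ is inward at full support. If $x_{bc}\to0$ while $x_{ab},x_{cd}$ stay positive, the merged part $\{b,c\}$ has the outgoing edge $c\to d$. So taking closures of regions defined by inwardness certifies nothing at $x^*$.

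The paper instead certifies agent $c$ by a minimiser of the reversal cost $\phi_w(\omega_c,y)$, the total weight of the edges pointing away from $y$. The condition $y\in M_w(\omega_c)$ is a finite system of non-strict linear inequalities in $w$, hence closed. At $w^*$, every $\supp(w^*)$-part containing a minimiser is automatically $\omega_c$-inward (Lemma~\ref{lem:min}). In the example above, $b$ is a minimiser near $x^*$ only if $x_{cd}\le x_{bc}$, which rules out the bad limit.

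\textbf{The capacity count.} Your ``Hall-type argument along the elimination order'' asserts what has to be proved. You need that the labels realised near $x^*$ by agents whose certified parts lie in $Z$ fall into a fixed set of $|Z|$ edges. This fails once labels depend on more than the part and the support, as they do under your smallest-weight-first elimination.

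Take a star with centre $z$ and all $s-1\ge2$ orientations directing every edge towards $z$. On every face, the only inward part is the centre part, which your rule reaches last, through the heaviest edge of $\supp(x)$. So every agent labels $x$ by that edge. The identical covers then meet only at the barycentre, which has full support, and all $s-1$ agents need the singleton $\{z\}$.

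The paper avoids this as follows. It fixes a leaf-elimination order of the edges once and for all, and labels a part $P$ of $Q-J$ by the first edge on its way to $\max J$. It then proves (Lemmas~\ref{lem:largest} and~\ref{lem:compatibility}) that for $F\subseteq J$ and $P\subseteq Z\in\mathcal C_F$ this label lies in $E_Q(Z)\cup\{b_F(Z)\}$, a set of exactly $|Z|$ edges. Combined with the bijection from Gale's theorem, this gives only the inequality $|A_Z|\le|Z|$, and index $0$ absorbs the single unit of slack. Your exact count, with one cut edge per non-root part, is also not what happens: $\max F$ is the designated edge of both of its end parts.

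Without these two ingredients the proposal does not prove the theorem, and the fallback to an unspecified parity argument does not fill the gap.
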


The trivial partition $\{V(Q)\}$ satisfies the inward condition vacuously, so the content of Theorem~\ref{thm:C} lies in the requirement $r\ge2$. The index $0$ carries no requirement, and it cannot be replaced by an $s$-th orientation (Remark~\ref{rem:C-sharp}). Theorem~\ref{thm:C} yields a balanced splitting of trees (Corollary~\ref{cor:split}): if $\mu_1,\ldots,\mu_{s-1}$ are real functions on the vertices of $Q$, each with total sum zero, then the partitions can be chosen so that $\sum_{v\in Z_j}\mu_c(v)\ge0$ for every $j$ and every $c\in G_j\setminus\{0\}$.

We also show that Theorem~\ref{thm:A} is sharp. Already on a path, one partition with $k+1$ parts and $k-1$ partitions with $k$ parts may admit no choice of one part from each partition covering all vertices (Proposition~\ref{prop:sharp}). For paths, Theorem~\ref{thm:Aprime} is equivalent to the elementary fact that a word in which each of $k$ letters occurs at most $k-1$ times misses some ordering of the letters as a subsequence (Proposition~\ref{prop:paths}). Known short words containing all permutations show that the individual bounds $|E_i|\le k-1$ cannot be replaced by a bound on their sum (Proposition~\ref{prop:average}). On cycles, partitions into at most $k-1$ parts suffice, and this is best possible for every $k\ge2$ (Proposition~\ref{prop:cycles}). In Section~\ref{sec:chores} we prove the packing counterpart for goods (Proposition~\ref{prop:goods}). Moreover, a connected graph satisfies the conclusion of Theorem~\ref{thm:A} for all $k$, or its packing counterpart, if and only if it is a triangular cactus, that is, if it contains no cycle of length at least four (Theorem~\ref{thm:cacti}). Finally, a rainbow partition can be computed in time $k^{O(k)}$ plus polynomial time, and a connected MMS allocation of additive chores on a tree in time $n^{O(n)}$ plus polynomial time (Theorem~\ref{thm:algorithm} and Corollary~\ref{cor:mms-algorithm}).

\subsection{Why trees, and why chores are harder than goods}
\label{sec:why}

\parhead{Packing versus covering.}
Suppose that every agent fixes a connected partition $\mathcal P_i$ attaining its maximin share, with at most $n$ parts for chores and exactly $n$ parts for goods (Section~\ref{sec:chores}). For goods with monotone valuations, every connected bundle containing a part of $\mathcal P_i$ is acceptable to agent $i$; for chores with monotone costs, every connected subset of a part is acceptable. Goods, therefore, lead to a \emph{packing} problem, in which pairwise disjoint connected supersets of parts must be chosen, one for each agent, whereas chores lead to a \emph{covering} problem, in which connected subsets of parts must exhaust all vertices. On trees, the packing problem has a short greedy solution: a deepest vertex whose subtree contains a part of some partition can be cut off together with its subtree, and every other partition loses at most one part (Proposition~\ref{prop:goods}, a combinatorial form of the argument in~\cite{BouveretEtAl2017}). The covering problem does not reduce in this way. Cutting off an acceptable bundle may disconnect the remaining tree~\cite[Section~3]{XiaoEtAl2023arXiv}, and the restrictions of the other partitions to the resulting components may then have too many parts for the remaining agents. Theorem~\ref{thm:A} shows that, nevertheless, the numerical condition $|\mathcal P_i|\le k$ suffices.

\parhead{The host tree.}
Let the vertices $1,2,4,3$ form a four-cycle in this cyclic order, and let
\[
 \mathcal P_1=\bigl\{\{1,2\},\{3,4\}\bigr\},\qquad
 \mathcal P_2=\bigl\{\{1,3\},\{2,4\}\bigr\};
\]
see Figure~\ref{fig:small}(a). Both are partitions of the cycle into two connected parts, and every part of $\mathcal P_1$ meets every part of $\mathcal P_2$ in exactly one vertex. Hence, no choice of one part from each partition covers the four vertices, and no two disjoint parts can be chosen either. The example defeats both the covering and the packing versions. For cycles, connected MMS allocations of goods need not exist already on an eight-cycle~\cite{BouveretEtAl2017}. For chores on cycles, Xiao, Qiu, and Huang~\cite[Section~7]{XiaoEtAl2023arXiv} studied approximate connected MMS allocations. For the combinatorial statement, Proposition~\ref{prop:cycles} shows that on cycles exactly one part fewer is needed: $k-1$ parts suffice, and $k$ parts do not suffice for any $k\ge2$.

\parhead{Paths.}
On a path, the components of $T-E_i$ are intervals, and Theorem~\ref{thm:Aprime} becomes a statement about words: the colours of the edges, read from left to right, form a word in which a greedy allocation succeeds exactly when some ordering of the colours is not a subsequence (Proposition~\ref{prop:paths} and Corollary~\ref{cor:paths}). A word in which every letter occurs at most $k-1$ times cannot contain all $k!$ orderings; this follows from a variant of the elementary greedy lower-bound argument of Kleitman and Kwiatkowski~\cite{KleitmanKwiatkowski1976} (see~\cite[Section~4]{EngenVatter2021}), and we include the short proof in Corollary~\ref{cor:paths}. For paths, the theorem therefore has a short proof. The difficulty lies in the branching of trees.

\parhead{Covers versus partitions.}
A choice of components covering the vertices does not, in general, yield a partition of the vertices into connected subsets of the chosen components. In the star of Figure~\ref{fig:small}(b), the components $K_1=\{a,x,b\}$ of $T-E_1$ and $K_2=\{c,x,d\}$ of $T-E_2$ cover all vertices, but there are no disjoint connected sets $B_1\subseteq K_1$ and $B_2\subseteq K_2$ covering the vertices: the centre $x$ must belong to one of them, and the other then contains two leaves but not the centre. The partition statement is needed for fair division because bundles must be disjoint (the edge sets in this example have two edges each, so it lies outside the range of Theorem~\ref{thm:Aprime}; Section~\ref{sec:covers} gives an example within this range).

\begin{figure}[t]
\centering
\begin{tikzpicture}[scale=1.25,vtx/.style={circle,fill=black,inner sep=1.3pt}]
\begin{scope}
\coordinate (1) at (0,1); \coordinate (2) at (1,1); \coordinate (4) at (1,0); \coordinate (3) at (0,0);
\draw[line width=12pt,line cap=round,red!35,opacity=0.6] (1)--(2);
\draw[line width=12pt,line cap=round,red!35,opacity=0.6] (3)--(4);
\draw[line width=12pt,line cap=round,blue!35,opacity=0.6] (1)--(3);
\draw[line width=12pt,line cap=round,blue!35,opacity=0.6] (2)--(4);
\draw (1)--(2)--(4)--(3)--cycle;
\foreach \v in {1,2,3,4} \node[vtx] at (\v) {};
\node[above left,font=\scriptsize] at (1) {$1$}; \node[above right,font=\scriptsize] at (2) {$2$};
\node[below right,font=\scriptsize] at (4) {$4$}; \node[below left,font=\scriptsize] at (3) {$3$};
\node at (0.5,-0.55) {\small (a)};
\end{scope}
\begin{scope}[xshift=4.2cm,yshift=0.5cm]
\coordinate (x) at (0,0);
\coordinate (a) at (-1.1,0.55); \coordinate (b) at (-1.1,-0.55);
\coordinate (c) at (1.1,0.55); \coordinate (d) at (1.1,-0.55);
\draw[line width=12pt,line cap=round,line join=round,red!35,opacity=0.6] (a)--(x)--(b);
\draw[line width=12pt,line cap=round,line join=round,blue!35,opacity=0.6] (c)--(x)--(d);
\draw (x)--(a) node[midway,above,font=\scriptsize]{$2$};
\draw (x)--(b) node[midway,below,font=\scriptsize]{$2$};
\draw (x)--(c) node[midway,above,font=\scriptsize]{$1$};
\draw (x)--(d) node[midway,below,font=\scriptsize]{$1$};
\foreach \v in {x,a,b,c,d} \node[vtx] at (\v) {};
\node[left,font=\scriptsize] at (a) {$a$}; \node[left,font=\scriptsize] at (b) {$b$}; \node[right,font=\scriptsize] at (c) {$c$}; \node[right,font=\scriptsize] at (d) {$d$};
\node[above=2pt,font=\scriptsize] at (x) {$x$};
\node at (0,-1.05) {\small (b)};
\end{scope}
\end{tikzpicture}
\caption{(a) Two partitions of the four-cycle $1\,2\,4\,3$ into two connected parts: $\mathcal P_1$ (horizontal) and $\mathcal P_2$ (vertical). No part of $\mathcal P_1$ together with a part of $\mathcal P_2$ covers the cycle. (b) A star with $E_1=\{xc,xd\}$ and $E_2=\{xa,xb\}$ (edge labels). The components $K_1=\{a,x,b\}$ of $T-E_1$ and $K_2=\{c,x,d\}$ of $T-E_2$ (shaded) cover the star, but the star has no partition into connected sets $B_1\subseteq K_1$ and $B_2\subseteq K_2$.}
\label{fig:small}
\end{figure}

\parhead{Topology.}
Topological methods have been very successful for connected fair division on paths, from envy-free cake cutting to its discrete versions (Section~\ref{sec:related}). For cakes shaped like trees, however, the continuous statements fail: for a cake shaped like the letter Y, an envy-free connected division need not exist even for two agents, as pointed out by Stromquist (see~\cite{IgarashiZwicker2024}); for KKM- and Sperner-type theorems on trees see~\cite{NiedermaierEtAl2014}. We therefore do not apply topology to divisions of the tree itself. Instead, it is applied to a simplex of weights on the edges of an auxiliary tree.

\subsection{Proof strategy}
\label{sec:strategy}

After standard normalisations (contracting uncoloured edges, subdividing edges with several colours, and adding pendant edges), we may assume in Theorem~\ref{thm:Aprime} that every edge has exactly one colour and every colour occurs on exactly $k-1$ edges. We then distinguish one colour, say $0$, and contract the components of the forest $T-E_0$. The quotient $Q$ is a tree on $k$ vertices whose edges are the edges of colour $0$. Every other colour $c$ has all its $k-1$ edges inside the contracted components, and we record their numbers $e_c(H)$ at the vertices $H$ of $Q$. It suffices to split $Q$ into $r\ge2$ connected parts $Z_j$ and the colours into groups $G_j$ with $|G_j|=|Z_j|$, so that every colour $c\in G_j\setminus\{0\}$ has at most $|Z_j|-1$ edges inside the preimage of $Z_j$. Colour $0$ automatically has exactly $|Z_j|-1$ edges there, and induction on the number of colours applies to every part; see Figure~\ref{fig:reduction}.

To find such a splitting, orient every edge of $Q$, for each colour $c\ne0$, towards the side on which colour $c$ has slack, that is, fewer edges than that side has vertices; exactly one side has slack, since the two sides together have $k$ vertices and carry $k-1$ edges of colour $c$. If a connected set is inward for this orientation, then the budget condition for $c$ holds on it (see the proofs of Corollaries~\ref{cor:split} and~\ref{cor:integer}). This is how Theorem~\ref{thm:C} enters.

To prove Theorem~\ref{thm:C}, we attach nonnegative weights $w$ to the edges of $Q$. For an orientation $\omega$ and a vertex $y$, let $\phi_w(\omega,y)$ be the total weight of the edges pointing away from $y$, that is, the weight that must be reversed to direct every edge towards $y$. The minimizers of $\phi_w(\omega,\cdot)$ form a union of components of $Q-\supp(w)$, and each such component is $\omega$-inward (Lemma~\ref{lem:min}). Hence, it suffices to find common weights $w^*\ne0$ at which the $s-1$ orientations have minimisers at pairwise distinct vertices (Theorem~\ref{thm:weights}); the components of $Q-\supp(w^*)$ are then the parts required in Theorem~\ref{thm:C}.

We find $w^*$ by means of Gale's colourful KKM theorem on the simplex of edge weights. Every orientation labels every weight vector $w$ by an edge of $\supp(w)$ on the boundary of a component of $Q-\supp(w)$ consisting of minimisers of $\phi_w(\omega,\cdot)$, and the theorem provides a point $w^*$ near which distinct orientations realise distinct labels. The difficulty is that weights may vanish at the limit point $w^*$. Components then merge, and a merged component $Z$ must accommodate every orientation whose label is an edge inside $Z$ or on its boundary. If every orientation could choose its boundary label freely, there could be $|Z|-1+|\bd Z|$ such orientations, for instance $s-1$ orientations whose minimisers lie at the centre of a star, while only $|Z|$ vertices are available. We resolve this with a labelling that does not depend on the orientation and is determined by a leaf-elimination order of the edges: a component is labelled by the first edge on its way towards the largest edge of the current support. The key property (Lemma~\ref{lem:compatibility}) is that, for every coarsening, the labels of all finer components inside a merged component $Z$ lie in a fixed set of exactly $|Z|$ edges, namely the internal edges of $Z$ and one designated boundary edge (Figure~\ref{fig:labels}). This local capacity count converts the colourful point into the assignment required in Theorem~\ref{thm:weights}.

\subsection{Related work}
\label{sec:related}

\parhead{Connected fair division.}
The graph model of Bouveret et~al.~\cite{BouveretEtAl2017} has been studied from many angles. For goods on cycles, Lonc and Truszczyński~\cite{LoncTruszczynski2020} studied existence and approximation of MMS allocations; Greco and Scarcello~\cite{GrecoScarcello2020} studied the complexity of computing connected MMS allocations on general graphs, and Bla\v{z}ej et~al.~\cite{BlazejEtAl2025} constant-factor approximations on several graph classes. Bei, Igarashi, Lu, and Suksompong~\cite{BeiEtAl2022} compared connected and unconstrained maximin shares through the price of connectivity. Envy-freeness up to one item with connected bundles was studied by Bil\`o et~al.~\cite{BiloEtAl2022}, and on paths it was established for any number of agents by Igarashi~\cite{Igarashi2023}. Pareto optimality with connected bundles was studied by Igarashi and Peters~\cite{IgarashiPeters2019}, and the parameterised complexity of connected fair division by Deligkas et~al.~\cite{DeligkasEtAl2021}. For items on a line, contiguous allocations of goods and of chores were studied in~\cite{Suksompong2019} and~\cite{HohneVanStee2021}. For divisible resources placed on graphs, Bei et~al.~\cite{BeiEtAl2025} studied proportional divisions of graphical cakes, and Elkind, Segal-Halevi, and Suksompong~\cite{ElkindEtAl2021} proved that MMS allocations of graphical cakes exist when the graph is a forest. Igarashi and Zwicker~\cite{IgarashiZwicker2024} studied envy-free divisions of cakes formed by intervals glued together (tangles), which also shows how branching affects topological existence results.

\parhead{Chores.}
Bouveret, Cechl\'arov\'a, and Lesca~\cite{BouveretEtAl2019} initiated the study of chores on graphs, for proportionality, envy-freeness, and equitability, and showed that chore division differs from the division of goods also in this setting. Xiao, Qiu, and Huang~\cite{XiaoEtAl2023,XiaoEtAl2023arXiv} studied connected MMS chore allocations. Besides the results on trees mentioned above, they obtained connected $7/6$-approximate MMS allocations of chores for three agents on a cycle by a linear-programming method, and they noted that, for additive costs, the benchmarks can be computed in polynomial time on trees and on cycles~\cite[Lemma~2.1]{XiaoEtAl2023arXiv}. Sun and Li~\cite{SunLi2026} proved that connected allocations of chores on a path that are envy-free up to one item always exist. Without connectivity constraints, MMS allocations of chores need not exist~\cite{AzizEtAl2017}; approximation guarantees were improved in a sequence of papers, including~\cite{AzizEtAl2017,GargEtAl2025,HuangLu2021,HuangSegalHalevi2023}, and lower bounds were obtained by Feige, Sapir, and Tauber~\cite{FeigeEtAl2021}. For goods without connectivity constraints, MMS allocations also need not exist~\cite{KurokawaEtAl2018}. We refer to the surveys~\cite{AmanatidisEtAl2023,GuoEtAl2023}. Our positive result rests on the compatibility of all benchmark partitions with one common tree. It does not involve any comparison of the numerical costs of bundles.

\parhead{Topological methods.}
Sperner's lemma~\cite{Sperner1928} and the KKM theorem~\cite{KKM1929} underlie many existence results in fair division. Stromquist~\cite{Stromquist1980} and Woodall~\cite{Woodall1980} proved that envy-free divisions of an interval into connected pieces exist. Su~\cite{Su1999} popularised the Sperner-lemma approach to rental harmony and cake cutting, with vertices of a triangulation owned by different agents, an idea that he credits to Simmons. The colourful KKM theorem of Gale~\cite{Gale1984} and the permutation-based Sperner lemma of Bapat~\cite{Bapat1989} formalise the idea that distinct agents can receive distinct labels; see Asada et~al.~\cite{AsadaEtAl2018} for generalisations and further applications to fair division. Meunier and Su~\cite{MeunierSu2019} proved further multilabelled versions of Sperner's lemma and of Fan's lemma, with applications to fair division. Asada et~al.~\cite{AsadaEtAl2018} showed that an envy-free rent division can be found from the preferences of only $n-1$ of the $n$ roommates, and Frick, Houston-Edwards, and Meunier~\cite{FrickEtAl2019} gave an elementary proof of this secretive-roommate theorem. Discrete connected divisions on paths were obtained by related arguments in~\cite{BiloEtAl2022,Igarashi2023,SunLi2026}. In our proof, the simplex parametrizes weights on the edges of an auxiliary tree rather than divisions of the items, and the labels are discontinuous functions of the weights. The tree-specific ingredient is a combinatorial compatibility property of the labels under coarsening (Lemma~\ref{lem:compatibility}).

\parhead{Cooperative colourings.}
A cooperative colouring of graphs $\Gamma_1,\ldots,\Gamma_k$ on a common vertex set is a choice of an independent set in each $\Gamma_i$ such that these sets cover all vertices~\cite{AharoniEtAl2015}. Given connected partitions $\mathcal P_i$, let $\Gamma_i$ be the complete multipartite graph whose parts are the members of $\mathcal P_i$. Its independent sets are the subsets of single members of $\mathcal P_i$, and its chromatic number is $|\mathcal P_i|$. Theorem~\ref{thm:A} thus asserts that graphs $\Gamma_1,\ldots,\Gamma_k$ on $V(T)$ admit a cooperative colouring, even one by disjoint sets whose nonempty members are connected in $T$, whenever each $\Gamma_i$ has a proper $k$-colouring whose colour classes are connected in a common tree $T$. Most known sufficient conditions for cooperative colourings bound the maximum degree~\cite{AharoniEtAl2015,AharoniEtAl2020,Bradshaw2023}, in the spirit of Haxell's theorem on independent transversals~\cite{Haxell2001}; the graphs $\Gamma_i$ above are dense. Bartnicki, Czerwi\'nski, Grytczuk, and Miechowicz~\cite{BartnickiEtAl2023} proved that $k$ matroids on a common ground set admit a cooperative colouring whenever each of them is $k$-colourable. For graphs, $k$-colourability alone does not suffice: the partitions of the four-cycle in Figure~\ref{fig:small}(a) correspond to two copies of $K_{2,2}$. Theorem~\ref{thm:A} identifies a natural non-matroidal class for which $k$-colourability does suffice.

\parhead{Monochromatic partitions.}
Colour every edge $e$ of $T$ by the set of those $i\in[k]$ with $e\notin E_i$; this colouring is complementary to the one in Theorem~\ref{thm:Aprime}. Then the edges of colour $i$ form the spanning forest $T-E_i$, which has at most $k$ components, and Theorem~\ref{thm:Aprime} states that $V(T)$ can be partitioned into at most $k$ monochromatic trees of pairwise distinct colours; single vertices count as trees of every colour. Partitions and covers of edge-coloured graphs by monochromatic trees are a classical topic~\cite{ErdosEtAl1991,Gyarfas2016}. Haxell and Kohayakawa~\cite{HaxellKohayakawa1996} showed that for large $N$ every $r$-edge-colouring of $K_N$ admits a partition into at most $r$ monochromatic trees of distinct colours. Bal and DeBiasio~\cite{BalDeBiasio2017} studied this partition property and its covering analogue for other host graphs, and Buci\'c, Kor\'andi, and Sudakov~\cite{BucicEtAl2021} related the covering version to transversal covers of auxiliary hypergraphs. Those results concern complete, random, or dense host graphs. In our setting, the host graph is a tree, and the hypothesis bounds the number of components of each colour class.

\parhead{Universal words and tree partitioning.}
The path case of our results is connected with words containing all permutations of an alphabet as subsequences~\cite{Newey1973,Adleman1974,KoutasHu1975,KleitmanKwiatkowski1976,Radomirovic2012}; see the survey of Engen and Vatter~\cite{EngenVatter2021}. Finally, for additive costs and $|V(T)|\ge n$, computing a benchmark $\MMS_i(T,n)$ is the classical min--max tree partitioning problem (Lemma~\ref{lem:exactly}), solved in polynomial time by Becker, Schach, and Perl~\cite{BeckerEtAl1982} and in linear time by Frederickson~\cite{Frederickson1991}; see also~\cite{KunduMisra1977,PerlSchach1981} for related tree partitioning problems.

\subsection{Organization}
Section~\ref{sec:prelim} collects some facts about trees, the equivalence of Theorem~\ref{thm:A} with the partition statement of Theorem~\ref{thm:Aprime}, the normalisation, and the passage to nonempty pieces. Section~\ref{sec:reduction} derives Theorems~\ref{thm:A} and~\ref{thm:Aprime} from Theorem~\ref{thm:C}. Section~\ref{sec:orientation} proves Theorem~\ref{thm:C}. Section~\ref{sec:sharpness} discusses sharpness, paths, and cycles, Section~\ref{sec:chores} the fair-division consequences, Section~\ref{sec:beyond} graphs other than trees, and Section~\ref{sec:algorithm} algorithms. Section~\ref{sec:open} lists open problems.

\section{Preliminaries}
\label{sec:prelim}

\subsection{Tree facts}
For a graph $G$, a set $X\subseteq V(G)$, and a set $F\subseteq E(G)$, we write $G[X]$ for the induced subgraph, $G-X=G[V(G)\setminus X]$, and $G-F$ for the graph obtained by deleting the edges of $F$ but keeping all vertices. A component is identified with its vertex set, and $\Comp(G)$ denotes the set of components of $G$. A tree is a nonempty connected acyclic graph.

Let $Q=(W,D)$ be a tree. For $F\subseteq D$ we write
\[
 \CC F=\Comp(Q-F),
 \qquad Z_F(y)=\text{the member of $\CC F$ containing $y$}\quad(y\in W),
\]
and call the members of $\CC F$ the \emph{$F$-parts}. For $Z\subseteq W$, let $\inn Z$ be the set of edges with both endpoints in $Z$, and $\bd Z$ the set of edges with exactly one endpoint in $Z$. For a connected partition $\mathcal P$ of $W$, the \emph{quotient} $Q/\mathcal P$ is the graph with vertex set $\mathcal P$ in which distinct $P,P'\in\mathcal P$ are adjacent if some edge of $Q$ joins $P$ and $P'$, and the \emph{quotient map} $\pi_{\mathcal P}\colon W\to\mathcal P$ sends a vertex to the part containing it.

\begin{lemma}
\label{lem:tree}
Let $Q=(W,D)$ be a tree.
\begin{enumerate}
\item A connected set contains the path of $Q$ between any two of its vertices. The intersection of two connected sets is connected or empty.
\item Let $Z\subseteq W$ be connected and let $H\in\Comp(Q-Z)$. Exactly one edge $f_H=qh$ of $Q$, with $q\in Z$ and $h\in H$, joins $Z$ and $H$, and $H$ is the component of $Q-f_H$ not containing $q$. The map $H\mapsto f_H$ is a bijection from $\Comp(Q-Z)$ onto $\bd Z$. For $z\in Z$ and $t\in H$, the path from $z$ to $t$ consists of the path from $z$ to $q$, which lies in $Z$, the edge $f_H$, and the path from $h$ to $t$, which lies in $H$. In particular, $q$ is the unique vertex of $Z$ closest to $t$, and $f_H$ is the first edge of the path from $q$ to $t$.
\item If $\mathcal P$ is a connected partition of $W$, then $Q/\mathcal P$ is a tree, exactly $|\mathcal P|-1$ edges of $Q$ join distinct parts and they correspond bijectively to the edges of $Q/\mathcal P$, and the preimage under $\pi_{\mathcal P}$ of a connected set of vertices of $Q/\mathcal P$ is connected in $Q$.
\item For $F\subseteq D$, the set $\CC F$ has exactly $|F|+1$ members, and each of them is connected in $Q$. Every edge of $F$ joins two distinct $F$-parts, and every edge outside $F$ has both endpoints in one $F$-part. Consequently $\bd Z\subseteq F$ and $\inn Z\cap F=\varnothing$ for every $Z\in\CC F$, and $Q/\CC F$ is a tree whose edges correspond bijectively to $F$. If $F\subseteq J\subseteq D$, then every $J$-part is contained in an $F$-part.
\end{enumerate}
\end{lemma}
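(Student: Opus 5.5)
The four parts are standard facts about trees, and I will prove them in order, each resting on the previous ones. The only tools needed are uniqueness of paths in a tree and the count $|E|=|V|-1$ for trees, together with its forest version: a forest with $m$ vertices and $c$ components has $m-c$ edges.

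For (i), let $Z$ be connected. For $u,v\in Z$, the subgraph $Q[Z]$ contains a $u$--$v$ path. This is also a path in $Q$, and by uniqueness of paths in a tree it is the $Q$-path from $u$ to $v$. If $Z,Z'$ are connected and $u,v\in Z\cap Z'$, then the $u$--$v$ path lies in both sets, hence in the intersection. So a nonempty intersection induces a connected subgraph.

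For (ii), let $H\in\Comp(Q-Z)$ with $Z$ connected.
\begin{enumerate}
\item \emph{Existence of a joining edge.} Because $Q$ is connected, some edge joins $H$ to $W\setminus H$. Its other endpoint cannot lie in $W\setminus(Z\cup H)$, since $H$ is a component of $Q-Z$. So it lies in $Z$.
\item \emph{Uniqueness.} Suppose two edges $qh$ and $q'h'$ join $Z$ and $H$. Combining a $q$--$q'$ path in $Z$, an $h'$--$h$ path in $H$, and the two edges gives a cycle, which is impossible.
\item \emph{Description of $H$.} Deleting $f_H=qh$ splits $Q$ into two components. The set $H$ is connected and avoids $f_H$, so it lies in the component containing $h$. $Z$ lies in the other one. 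Every vertex outside $Z\cup H$ lies in some other component $H'$ of $Q-Z$, which attaches to $Z$ through its own edge $f_{H'}$, so it stays on $q$'s side. Hence $H$ is exactly the component of $Q-f_H$ not containing $q$.
\item \emph{Bijection.} The map $H\mapsto f_H$ is injective, because $f_H$ determines $H$ as the side of $f_H$ away from $Z$. It is surjective onto $\bd Z$, because every boundary edge has its non-$Z$ endpoint in some component $H$, and by uniqueness that edge is $f_H$.
\item \emph{Path decomposition.} Concatenating the $z$--$q$ path in $Z$, the edge $f_H$, and the $h$--$t$ path in $H$ gives a walk that visits disjoint vertex sets in turn, so it is a path. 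By uniqueness it is the $z$--$t$ path. Taking $z$ arbitrary in $Z$ shows that $q$ is the vertex of $Z$ closest to $t$, and that $f_H$ is the first edge of the path from $q$ to $t$.
\end{enumerate}

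For (iii), let $\mathcal P$ be a connected partition of $W$. The quotient $Q/\mathcal P$ is connected because $Q$ is. Let $m$ be the number of edges of $Q$ joining distinct parts. Each part $P$ induces a tree, with $|P|-1$ edges, so
\[
 m=(|W|-1)-\sum_{P\in\mathcal P}(|P|-1)=|\mathcal P|-1 .
\]
Two parts cannot be joined by two edges of $Q$, since that would create a cycle, as in (ii). So the joining edges correspond bijectively to the edges of $Q/\mathcal P$. Then $Q/\mathcal P$ is a connected graph with $|\mathcal P|-1$ edges on $|\mathcal P|$ vertices, so it is a tree. For the preimage claim, let $S$ be a connected set of vertices of $Q/\mathcal P$. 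The preimage $\pi_{\mathcal P}^{-1}(S)$ is a union of connected parts. For each edge of $Q/\mathcal P[S]$, the corresponding edge of $Q$ links the two parts it joins. Hence the preimage is connected in $Q$.

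For (iv), let $F\subseteq D$. The graph $Q-F$ is a forest with $|W|$ vertices and $|W|-1-|F|$ edges, so it has $|F|+1$ components. Each component is connected in $Q-F$, hence connected in $Q$.
\begin{enumerate}
\item An edge outside $F$ survives in $Q-F$, so its endpoints lie in one component.
\item An edge $e\in F$ cannot have both endpoints in one component $Z$. Otherwise the path in $Z$ together with $e$ would form a cycle.
\end{enumerate}
These two facts give $\bd Z\subseteq F$ and $\inn Z\cap F=\varnothing$ for every $Z\in\CC F$. Applying (iii) to the partition $\CC F$ shows that $Q/\CC F$ is a tree whose edges correspond to the joining edges, and these are exactly the edges of $F$. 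Finally, suppose $F\subseteq J\subseteq D$. Then $Q-J$ is a subgraph of $Q-F$, so every $J$-part is connected in $Q-F$. It therefore lies in a single $F$-part.

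The main obstacle is only bookkeeping in (ii): showing that $H$ is exactly one side of $f_H$, and not merely contained in it. The argument is to observe that every other component of $Q-Z$ attaches to $Z$ through its own boundary edge, so it stays on $q$'s side of $f_H$.
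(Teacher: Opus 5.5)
Your proof is correct and follows the paper's argument essentially step by step: uniqueness of tree paths for (i), the cycle argument for the unique joining edge in (ii), the count of $|W|-|\mathcal P|$ edges inside parts for (iii), and the forest component count for (iv). The only cosmetic difference is in (ii), where you show that $H$ is a whole side of $f_H$ by placing $W\setminus H$ on $q$'s side through the edges $f_{H'}$; the paper instead notes that $f_H$ is the only edge leaving $H$, so $H$ is already a component of $Q-f_H$.
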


\begin{proof}
(i) If $X$ is connected and $x,y\in X$, then $Q[X]$ contains an $x$--$y$ path, which is the unique $x$--$y$ path of $Q$. Conversely, a nonempty set containing the path between any two of its vertices is connected. The intersection of two sets with this property has it as well.

(ii) Since $Q$ is connected and $H\ne W$, some edge has exactly one endpoint in $H$. Its other endpoint lies in no other component of $Q-Z$, hence in $Z$. Suppose that $h_1z_1\ne h_2z_2$ are two edges with $h_1,h_2\in H$ and $z_1,z_2\in Z$. The path from $h_2$ to $h_1$ in $Q[H]$, the edge $h_1z_1$, the path from $z_1$ to $z_2$ in $Q[Z]$, and the edge $z_2h_2$ form a cycle, which is impossible. Hence, $f_H$ is the only edge with exactly one endpoint in $H$. As $H$ is connected in $Q-f_H$ and no edge of $Q-f_H$ leaves $H$, it is a component of $Q-f_H$, and it does not contain $q$. Every edge of $\bd Z$ has its outer endpoint in exactly one component of $Q-Z$, which gives the bijection. Finally, the walk described in the statement has pairwise distinct vertices because $Z\cap H=\varnothing$, so it is the path from $z$ to $t$. Its length is $\operatorname{dist}(z,q)+1+\operatorname{dist}(h,t)$, which is minimal exactly for $z=q$.

(iii) The quotient is connected because $Q$ is. Two distinct edges of $Q$ joining the same two parts $P\ne P'$ would form a cycle together with paths inside $P$ and $P'$. If $\mathcal P$ has $r$ parts, then each $Q[P]$ is a tree, so exactly $\sum_{P\in\mathcal P}(|P|-1)=|W|-r$ edges of $Q$ lie inside parts, and the remaining $r-1$ edges give $r-1$ distinct edges of the quotient. A connected graph with $r$ vertices and $r-1$ edges is a tree. If $S$ is a connected set of vertices of $Q/\mathcal P$ and $u,v\in\pi_{\mathcal P}^{-1}(S)$, then a path from $\pi_{\mathcal P}(u)$ to $\pi_{\mathcal P}(v)$ in $(Q/\mathcal P)[S]$ lifts to a walk from $u$ to $v$ in $Q[\pi_{\mathcal P}^{-1}(S)]$: replace every quotient edge by the corresponding edge of $Q$ and join consecutive endpoints by paths inside the parts.

(iv) Deleting an edge of a forest increases the number of components by exactly one: its endpoints are joined by no other path, and every other vertex remains joined to one of them. Hence $|\CC F|=|F|+1$. Each $F$-part is connected in $Q-F$, hence in $Q$. An edge of $F$ with both endpoints in one $F$-part would form a cycle together with a path in that part, and an edge outside $F$ is an edge of $Q-F$. This gives $\bd Z\subseteq F$ and $\inn Z\cap F=\varnothing$, and the statement about $Q/\CC F$ follows from~(iii). If $F\subseteq J$, then $Q-J$ is a subgraph of $Q-F$, so every $J$-part is connected in $Q-F$ and lies in one $F$-part.
\end{proof}

\subsection{The edge-colour form}
In the proofs it is convenient to index colours by an arbitrary finite set $C$ with $|C|=k$. An \emph{instance} $(T,(E_c)_{c\in C})$ consists of a tree $T$ and edge sets $E_c\subseteq E(T)$; it is \emph{admissible} if $|E_c|\le k-1$ for all $c\in C$. A \emph{solution} is an allocation $(B_c)_{c\in C}$ of $V(T)$ such that every nonempty $B_c$ is connected and
\begin{equation}
 E(T[B_c])\cap E_c=\varnothing\qquad(c\in C).
 \label{eq:avoid}
\end{equation}
The partition statement of Theorem~\ref{thm:Aprime} asserts that every admissible instance has a solution.

\begin{proposition}
\label{prop:equivalence}
For every tree $T$ and every $k\ge1$, the first statement of Theorem~\ref{thm:A} for $T$ and $k$ is equivalent to the partition statement of Theorem~\ref{thm:Aprime} for $T$ and $k$. The partition statement of Theorem~\ref{thm:Aprime} implies its covering statement.
\end{proposition}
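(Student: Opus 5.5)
The plan is to use the dictionary between connected partitions of a tree and edge sets given by Lemma~\ref{lem:tree}(iv), and to show that it converts solutions of one problem into solutions of the other. For a connected partition $\mathcal P$ of $V(T)$, let $F(\mathcal P)\subseteq E(T)$ be the set of edges joining distinct parts. By Lemma~\ref{lem:tree}(iii) we have $|F(\mathcal P)|=|\mathcal P|-1$. Moreover $\mathcal P=\CC{F(\mathcal P)}$: each part is connected, and no edge of $T-F(\mathcal P)$ leaves a part. Conversely, for $E\subseteq E(T)$ the partition $\CC E$ is connected with $|E|+1$ parts. So $|\mathcal P_i|\le k$ corresponds to $|E_i|\le k-1$ under $E_i=F(\mathcal P_i)$ and $\mathcal P_i=\CC{E_i}$.

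The key step is the following local fact. Let $E\subseteq E(T)$ and let $B\subseteq V(T)$ be connected. Then $B$ is contained in a member of $\CC E$ if and only if $E(T[B])\cap E=\varnothing$.
\begin{itemize}
\item[$(\Leftarrow)$] If $T[B]$ avoids $E$, then $T[B]$ is a connected subgraph of $T-E$, so $B$ lies in one component of $T-E$.
\item[$(\Rightarrow)$] If $B\subseteq Z$ with $Z\in\CC E$, then $E(T[B])\subseteq \inn Z$, and $\inn Z\cap E=\varnothing$ by Lemma~\ref{lem:tree}(iv).
\end{itemize}

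With this fact, both implications of the equivalence follow by translating instances. Assume the first statement of Theorem~\ref{thm:A}, and let $E_1,\dots,E_k$ satisfy $|E_i|\le k-1$. Put $\mathcal P_i=\CC{E_i}$; these are connected partitions with $|\mathcal P_i|\le k$. Theorem~\ref{thm:A} gives an allocation whose nonempty pieces are connected and lie in members of $\mathcal P_i$. By the fact above, $T[B_i]$ then avoids $E_i$, which is~\eqref{eq:avoid}.

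Conversely, assume the partition statement of Theorem~\ref{thm:Aprime}, and let $\mathcal P_1,\dots,\mathcal P_k$ be connected partitions with $|\mathcal P_i|\le k$. Put $E_i=F(\mathcal P_i)$, so that $|E_i|\le k-1$. A solution to this instance has nonempty pieces $B_i$ that are connected and satisfy $E(T[B_i])\cap E_i=\varnothing$. By the fact, each such $B_i$ lies in a member of $\CC{E_i}=\mathcal P_i$.

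For the covering statement, take a solution $(B_i)$ of the partition statement. For each nonempty $B_i$, the fact gives a component $K_i$ of $T-E_i$ with $B_i\subseteq K_i$. For each empty $B_i$, choose $K_i$ to be an arbitrary component of $T-E_i$; one exists because $V(T)\ne\varnothing$. Then $\bigcup_i K_i\supseteq\bigcup_i B_i=V(T)$.

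No step here is a real obstacle. The only point needing care is the identity $\mathcal P=\CC{F(\mathcal P)}$, which shows that the two translations are mutually inverse. Even this is not strictly necessary: for the converse direction it suffices that each member of $\CC{F(\mathcal P)}$ lies in a member of $\mathcal P$, and this holds because every part of $\mathcal P$ is connected in $T-F(\mathcal P)$ and no edge of $T-F(\mathcal P)$ leaves it.
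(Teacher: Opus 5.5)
Your proposal is correct and follows the paper's proof essentially step by step. You use the same translation $\mathcal P_i\leftrightarrow E_i$ via Lemma~\ref{lem:tree}(iii),(iv) and the same identity $\Comp(T-E_i)=\mathcal P_i$. You also use the same two-sided fact that a connected set lies in a member of $\Comp(T-E_i)$ exactly when it induces no edge of $E_i$, and the same choice of components $K_i$ for the covering statement.
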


\begin{proof}
Given edge sets $E_1,\ldots,E_k$ with $|E_i|\le k-1$, the partitions $\mathcal P_i=\Comp(T-E_i)$ are connected and have at most $k$ members by Lemma~\ref{lem:tree}(iv). A connected subset of a member of $\mathcal P_i$ contains no edge of $E_i$, since $E_i$ contains no edge with both endpoints in one member. Hence a rainbow partition subordinate to $(\mathcal P_i)$ is a solution.

Conversely, given connected partitions $\mathcal P_i$ with $|\mathcal P_i|\le k$, let $E_i$ be the set of edges of $T$ joining distinct members of $\mathcal P_i$. By Lemma~\ref{lem:tree}(iii), $|E_i|=|\mathcal P_i|-1\le k-1$. Every member of $\mathcal P_i$ is connected in $T-E_i$, because its internal edges avoid $E_i$, and no edge of $T-E_i$ leaves it; thus $\Comp(T-E_i)=\mathcal P_i$. A connected set with no edge of $E_i$ is connected in $T-E_i$ and therefore lies in one member of $\mathcal P_i$. Hence a solution is a rainbow partition subordinate to $(\mathcal P_i)$.

For the covering statement, let $K_i$ be the component of $T-E_i$ containing $B_i$ if $B_i\ne\varnothing$; such a component exists because $B_i$ is connected in $T-E_i$ by~\eqref{eq:avoid}. If $B_i=\varnothing$, let $K_i$ be any component of $T-E_i$. Then $V(T)=\bigcup_iB_i\subseteq\bigcup_iK_i$.
\end{proof}

\subsection{Normalization}
For an instance $(T,(E_c)_{c\in C})$, let $\col(e)=\{c\in C:e\in E_c\}$. An edge $e$ is \emph{uncoloured} if $\col(e)=\varnothing$ and \emph{singly coloured} if $|\col(e)|=1$. An instance is \emph{saturated} if every edge is singly coloured and $|E_c|=k-1$ for every $c\in C$. A saturated instance has
\[
 |E(T)|=k(k-1)\quad\text{and}\quad |V(T)|=k(k-1)+1.
\]

\begin{lemma}[Normalization]
\label{lem:normalization}
Let $k\ge1$, let $C$ be a set of $k$ colours, and let $(T,(E_c)_{c\in C})$ be an admissible instance.
\begin{enumerate}
\item There is an admissible instance $(\widehat T,(\widehat E_c)_{c\in C})$, computable in polynomial time, in which every edge is singly coloured and $|\widehat E_c|=|E_c|$ for every $c\in C$, such that from every solution of it a solution of $(T,(E_c)_{c\in C})$ can be computed in polynomial time.
\item There is a saturated instance $(T^+,(E^+_c)_{c\in C})$ containing $\widehat T$ as an induced subtree such that every solution of it restricts to a solution of $(\widehat T,(\widehat E_c)_{c\in C})$. In particular, every solution of $(T^+,(E^+_c)_{c\in C})$ yields a solution of $(T,(E_c)_{c\in C})$.
\end{enumerate}
\end{lemma}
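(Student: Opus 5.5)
For~(i) I will use two local operations and check that each preserves admissibility and lifts solutions back.

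\emph{Contracting uncoloured edges.} If $e=uv$ has $\col(e)=\varnothing$, contract it to a vertex $x_e$. The colour sets are unchanged, so admissibility holds. To lift a solution $(B'_c)$ of the contracted instance, replace $x_e$ by $\{u,v\}$ in the unique piece containing it. Such a piece stays connected, since $T[\{u,v\}]$ is connected and every edge at $x_e$ lifts to an edge at $u$ or at $v$. It gains only the edge $e$, which lies in no $E_c$, so~\eqref{eq:avoid} still holds.

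\emph{Subdividing multiply coloured edges.} If $e=uv$ has $\col(e)=\{c_1,\ldots,c_m\}$ with $m\ge2$, replace it by a path $u=p_0,p_1,\ldots,p_m=v$ and give the edge $p_{j-1}p_j$ the single colour $c_j$. Each $|E_c|$ is unchanged, so the instance stays admissible. Lifting a solution needs care, because the new inner vertices $p_1,\ldots,p_{m-1}$ must be removed. Take a solution $(B'_c)$ and set $B_c=B'_c\cap V(T)$. Each nonempty $B_c$ is connected: $B'_c$ is a subpath-closed subtree, and the inner vertices have degree $2$ on the path. The one thing that can go wrong is this. Some $B'_c$ contains both $u$ and $v$, hence the whole path, hence the edge of colour $c_j$ for every $j$. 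Then $c\in\col(e)$ forces a violation, because the edge of $c$'s own colour lies inside $B'_c$. Otherwise no piece contains both $u$ and $v$, and $T[B_c]$ avoids $e$. Every other edge of $T[B_c]$ keeps its colours, so~\eqref{eq:avoid} holds.

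Iterating the two operations gives~(i). Each subdivision is polynomial in size, since there are at most $k(k-1)$ coloured edge incidences.

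For~(ii) I will pad the singly coloured instance. Each colour $c$ needs $k-1-|\widehat E_c|$ more edges. I attach them as new pendant leaves, each joined by an edge of colour $c$ to an arbitrary fixed vertex $r$ of $\widehat T$. The result $T^+$ is saturated and contains $\widehat T$ as an induced subtree. Given a solution $(B^+_c)$, set $B_c=B^+_c\cap V(\widehat T)$. Since $\widehat T$ is an induced subtree and the new vertices are leaves, removing leaves from a connected set leaves it connected or empty. Also $T[B_c]$ is an induced subgraph of $T^+[B^+_c]$ that uses only edges of $\widehat T$, so~\eqref{eq:avoid} is inherited. The final claim follows by composing with~(i).

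The main obstacle is the subdivision lifting in~(i): checking that deleting the inner path vertices never disconnects a piece and never lets a piece acquire the original multi-coloured edge.
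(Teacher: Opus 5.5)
Your proof is correct and follows the paper's route: contract uncoloured edges, subdivide multiply coloured edges into singly coloured paths and lift by intersecting with the old vertices, then saturate with pendant edges and restrict; the only difference is that you apply the first two operations one edge at a time rather than all at once via the quotient by the components of the uncoloured subgraph. One slip in the subdivision lift: the case split should be on whether $c\in\col(e)$ (then $B'_c$ cannot contain both $u$ and $v$), not ``otherwise no piece contains both $u$ and $v$'', since a piece of a colour $c\notin\col(e)$ may contain $u$, $v$ and hence $e$, which is harmless because $e\notin E_c$.
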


\begin{proof}
We apply three transformations. Each preserves the colour set $C$ and admissibility, and we show how solutions are transferred back.

\emph{Contraction.} Let $\mathcal U$ be the set of components of the spanning subgraph of $T$ formed by the uncoloured edges. This is a connected partition of $V(T)$. A coloured edge cannot have both endpoints in one member of $\mathcal U$, since together with an uncoloured path between them it would form a cycle. Thus the edges joining distinct members of $\mathcal U$ are exactly the coloured edges, and by Lemma~\ref{lem:tree}(iii) the quotient $T'=T/\mathcal U$ is a tree whose edges correspond bijectively to the coloured edges of $T$. Give every edge of $T'$ the colour set of the corresponding edge of $T$; then $|E'_c|=|E_c|$. Let $(B'_c)$ be a solution for $T'$ and put $B_c=\pi_{\mathcal U}^{-1}(B'_c)$. These sets form an allocation of $V(T)$, and nonempty ones are connected by Lemma~\ref{lem:tree}(iii). If an edge $uv$ of $T[B_c]$ belonged to $E_c$, it would be coloured, so $\pi_{\mathcal U}(u)\ne\pi_{\mathcal U}(v)$ would be joined by an edge of $E'_c$ inside $B'_c$, contrary to~\eqref{eq:avoid}.

\emph{Subdivision.} Now every edge is coloured. Replace each edge $uv$ with $\col(uv)=\{c_1,\ldots,c_t\}$, where $t\ge1$, by a path $u=x_0,x_1,\ldots,x_t=v$ through $t-1$ new vertices, and give the edge $x_{j-1}x_j$ the single colour $c_j$. Different edges receive disjoint sets of new vertices. The resulting graph $\widehat T$ is connected and has one edge fewer than the number of vertices, so it is a tree, and every colour occurs on as many edges as before. Let $(\widehat B_c)$ be a solution for $\widehat T$ and put $B_c=\widehat B_c\cap V(T)$. These sets form an allocation of $V(T)$. Every new vertex has degree $2$, and its neighbours are its predecessor and successor on the replacement path that contains it. Hence, a path of $\widehat T$ between old vertices that enters the interior of the replacement path of an old edge $uv$ traverses the whole replacement path from $u$ to $v$ or from $v$ to $u$. An edge of $\widehat T$ with two old endpoints is an old edge. Consequently, the old vertices on the path of $\widehat T$ between two old vertices $x,y$, taken in order, form the $x$--$y$ path of $T$. If $x,y\in B_c$, the path of $\widehat T$ between them lies in $\widehat B_c$ by Lemma~\ref{lem:tree}(i), so the $x$--$y$ path of $T$ lies in $B_c$; thus nonempty sets $B_c$ are connected. If an old edge $uv$ with $c\in\col(uv)$ had both endpoints in $B_c$, then its entire replacement path, which contains an edge of colour $c$, would lie in $\widehat B_c$, contrary to~\eqref{eq:avoid}.

\emph{Saturation.} For every colour $c$ with $|\widehat E_c|<k-1$, attach $k-1-|\widehat E_c|$ new pendant edges of colour $c$ to arbitrary vertices, each with a new leaf. The result $T^+$ is a tree containing $\widehat T$ as an induced subtree, and the new instance is saturated. For a solution $(B^+_c)$ of the new instance, the sets $B^+_c\cap V(\widehat T)$ form an allocation of $V(\widehat T)$. They are connected or empty by Lemma~\ref{lem:tree}(i), and they satisfy~\eqref{eq:avoid} because $\widehat T$ is an induced subgraph of $T^+$.

The first two steps, whose transfers compose, prove~(i); these steps and the transfers are clearly polynomial. The third step proves~(ii).
\end{proof}

\subsection{Nonempty pieces}

\begin{proposition}
\label{prop:nonempty}
Let $k\ge1$ and let $T$ be a tree with $|V(T)|\ge k$. If a rainbow partition subordinate to connected partitions $\mathcal P_1,\ldots,\mathcal P_k$ of $V(T)$ exists, then one with all pieces nonempty exists. The same holds for solutions of an instance of the edge-colour form.
\end{proposition}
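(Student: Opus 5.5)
Start from a rainbow partition $(B_i)_{i\in[k]}$ with as many nonempty pieces as possible, and suppose some piece $B_j$ is empty. The nonempty pieces form a connected partition of $V(T)$ into fewer than $k$ sets. Since $|V(T)|\ge k$, some nonempty piece $B_i$ has at least two vertices. The plan is to split such a piece and hand part of it to $j$, which increases the number of nonempty pieces and contradicts maximality.

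First pick a vertex $v$ to give to $j$. A single vertex $\{v\}$ is connected and lies in some member of $\mathcal P_j$, so it is acceptable as the piece of index $j$. In the edge-colour form, the condition~\eqref{eq:avoid} holds trivially for a singleton. The remaining set $B_i\setminus\{v\}$ must stay connected, so take $v$ to be a leaf of the tree $T[B_i]$. Such a leaf exists because $|B_i|\ge2$, and removing it leaves a connected nonempty set. This set is still contained in the same member of $\mathcal P_i$ and still contains no edge of $E_i$, since it is a subset of $B_i$ and induced subgraphs of subsets only lose edges.

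Setting $B_j'=\{v\}$ and $B_i'=B_i\setminus\{v\}$, and leaving all other pieces unchanged, gives a new allocation. Every nonempty piece is connected and subordinate, and there is one more nonempty piece. Iterating (or applying maximality) yields a rainbow partition with all pieces nonempty. The same argument applies verbatim to solutions of the edge-colour form, using the equivalence in Proposition~\ref{prop:equivalence} or directly checking~\eqref{eq:avoid}.

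There is no real obstacle. The only point needing care is existence of a piece with at least two vertices, which follows by pigeonhole from $|V(T)|\ge k$ and the existence of an empty piece: fewer than $k$ nonempty pieces cover at least $k$ vertices.
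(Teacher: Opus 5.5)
Your proposal is correct and is essentially the paper's own proof: as long as some piece is empty, a piece with at least two vertices exists by pigeonhole, and you move a leaf $v$ of $T[B_i]$ into the empty index as the singleton $\{v\}$, which raises the number of nonempty pieces. The verification for the edge-colour form is the same as the paper's: $\{v\}$ induces no edge, and $B_i\setminus\{v\}\subseteq B_i$.
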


\begin{proof}
Suppose that $B_i=\varnothing$. At most $k-1$ nonempty pieces cover at least $k$ vertices, so some piece $B_j$ has at least two vertices. The tree $T[B_j]$ has a leaf $v$. Replace $B_j$ by $B_j\setminus\{v\}$, which is connected and contained in the member of $\mathcal P_j$ containing $B_j$, and replace $B_i$ by $\{v\}$, which is contained in the member of $\mathcal P_i$ containing $v$. For the edge-colour form, note that $\{v\}$ induces no edge and that $B_j\setminus\{v\}\subseteq B_j$. The number of nonempty pieces increases, and the iteration completes the proof.
\end{proof}

\section{From inward partitions to rainbow partitions}
\label{sec:reduction}

In this section, we derive Theorems~\ref{thm:A} and~\ref{thm:Aprime} from Theorem~\ref{thm:C}, which is proved in Section~\ref{sec:orientation}.

\subsection{Balanced splittings}
For a function $\mu\colon W\to\R$ and $X\subseteq W$, write $\mu(X)=\sum_{v\in X}\mu(v)$.

\begin{corollary}[Balanced splitting]
\label{cor:split}
Let $Q=(W,D)$ be a tree with $s\ge2$ vertices, and let $\mu_1,\ldots,\mu_{s-1}\colon W\to\R$ satisfy $\mu_c(W)=0$ for every $c$. Then there are a connected partition $\{Z_1,\ldots,Z_r\}$ of $W$ with $r\ge2$ and a partition $\{0,1,\ldots,s-1\}=G_1\cup\cdots\cup G_r$ into pairwise disjoint sets such that $|G_j|=|Z_j|$ for every $j$ and
\[
 \mu_c(Z_j)\ge0\qquad(j\in[r],\ c\in G_j\setminus\{0\}).
\]
\end{corollary}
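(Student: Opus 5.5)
We derive Corollary~\ref{cor:split} directly from Theorem~\ref{thm:C}: for each $c\in[s-1]$ we build an orientation $\omega_c$ of $Q$ such that every $\omega_c$-inward connected set $Z$ satisfies $\mu_c(Z)\ge0$, and then apply Theorem~\ref{thm:C} to $\omega_1,\ldots,\omega_{s-1}$.

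\emph{Construction of $\omega_c$.} Let $e\in D$. By Lemma~\ref{lem:tree}(iv), the forest $Q-e$ has exactly two components $A_e$ and $A'_e=W\setminus A_e$. Since $\mu_c(W)=0$, we have $\mu_c(A_e)=-\mu_c(A'_e)$, so at least one side has nonnegative $\mu_c$-mass. Let $\omega_c$ direct $e$ towards the endpoint lying in a side of nonnegative mass. If both sides have mass zero, either choice works; fix one.

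\emph{Inward sets have nonnegative mass.} Let $Z$ be connected and $\omega_c$-inward. By Lemma~\ref{lem:tree}(ii), $W\setminus Z$ is the disjoint union of the components $H\in\Comp(Q-Z)$, and they correspond bijectively to the edges $f_H\in\bd Z$. Moreover, $H$ is exactly the side of $Q-f_H$ not containing $Z$. Inwardness means $\omega_c$ directs $f_H$ towards its endpoint in $Z$, that is, towards the side $W\setminus H$. By construction this side has nonnegative mass, so $\mu_c(H)=-\mu_c(W\setminus H)\le0$. Summing over $H$ gives
\[
 \mu_c(Z)=\mu_c(W)-\sum_{H}\mu_c(H)=-\sum_H\mu_c(H)\ge0.
\]
If $Z=W$, then $\bd Z=\varnothing$ and the claim is trivial.

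\emph{Conclusion.} Theorem~\ref{thm:C} applied to $\omega_1,\ldots,\omega_{s-1}$ gives a connected partition $\{Z_1,\ldots,Z_r\}$ with $r\ge2$ and groups $G_j$ with $|G_j|=|Z_j|$, such that $Z_j$ is $\omega_c$-inward for every $c\in G_j\setminus\{0\}$. The previous step then gives $\mu_c(Z_j)\ge0$. There is no real obstacle here. The only point needing care is that the sign for $H$ must come from the side opposite to $Z$, which follows from the bijection in Lemma~\ref{lem:tree}(ii) together with the description of $H$ as the component of $Q-f_H$ not containing $q$.
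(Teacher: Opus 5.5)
Your proposal is correct and follows essentially the same argument as the paper. You orient each edge towards a side of nonnegative $\mu_c$-mass, use the bijection $H\mapsto f_H$ of Lemma~\ref{lem:tree}(ii) to get $\mu_c(H)\le0$ for every component of $Q-Z$ and hence $\mu_c(Z)\ge0$ for inward $Z$, and then apply Theorem~\ref{thm:C}.
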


\begin{proof}
For $c\in[s-1]$, define an orientation $\omega_c$ as follows. If $f\in D$ has sides $A$ and $B$, that is, $\Comp(Q-f)=\{A,B\}$, where $f$ has one endpoint in each side by Lemma~\ref{lem:tree}(iv), then $\mu_c(A)+\mu_c(B)=0$, so at least one side has nonnegative $\mu_c$-mass. Direct $f$ towards its endpoint in such a side, choosing either side if both masses vanish. Thus, the head of every edge lies on a side of nonnegative mass.

Let $Z$ be an $\omega_c$-inward set, and let $H\in\Comp(Q-Z)$. By Lemma~\ref{lem:tree}(ii), $H$ is the side of $f_H$ not containing its endpoint $q\in Z$, and the other side is $W\setminus H$. Since $f_H$ is directed towards $q$, we get $\mu_c(W\setminus H)\ge0$, hence $\mu_c(H)\le0$. The components of $Q-Z$ partition $W\setminus Z$, so
\[
 \mu_c(Z)=\mu_c(W)-\sum_{H\in\Comp(Q-Z)}\mu_c(H)\ge0 .
\]
Applying Theorem~\ref{thm:C} to $\omega_1,\ldots,\omega_{s-1}$ gives the required partitions.
\end{proof}

\begin{corollary}[Integer splitting]
\label{cor:integer}
Let $Q=(W,D)$ be a tree with $s\ge2$ vertices, and let $e_1,\ldots,e_{s-1}\colon W\to\mathbb Z_{\ge0}$ satisfy $e_c(W)=s-1$ for every $c$. Then there are partitions as in Corollary~\ref{cor:split} such that
\[
 e_c(Z_j)\le|Z_j|-1\qquad(j\in[r],\ c\in G_j\setminus\{0\}).
\]
\end{corollary}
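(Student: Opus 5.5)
Corollary~\ref{cor:split} applies with the choice $\mu_c(v)=1-e_c(v)$ for $c\in[s-1]$ and $v\in W$. Since $|W|=s$ and $e_c(W)=s-1$, we get $\mu_c(W)=s-(s-1)=1$ rather than $0$. So the centring is slightly off, and we must either adjust the weights or repeat the orientation argument directly.

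The cleanest route is to rerun the orientation argument from the proof of Corollary~\ref{cor:split} with an integer threshold, and then invoke Theorem~\ref{thm:C}. For an edge $f$ with sides $A,B$, we have $e_c(A)+e_c(B)=s-1=|A|+|B|-1$. Because all quantities are integers, exactly one of the inequalities $e_c(A)\le|A|-1$ and $e_c(B)\le|B|-1$ holds. Indeed, if both failed we would have $e_c(A)\ge|A|$ and $e_c(B)\ge|B|$, giving a sum of at least $s$. If both held, the sum would be at most $s-2$. Let $\omega_c$ direct $f$ towards its endpoint on the side $S$ with $e_c(S)\le|S|-1$, the side with slack.

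Now let $Z$ be $\omega_c$-inward, and take $H\in\Comp(Q-Z)$. By Lemma~\ref{lem:tree}(ii), $H$ is the side of $f_H$ away from $Z$. Since $f_H$ points into $Z$, the side $W\setminus H$ has slack, so $H$ does not. By the dichotomy this means $e_c(H)\ge|H|$. Summing over the components, which partition $W\setminus Z$, gives
\[
 e_c(Z)=(s-1)-\sum_{H}e_c(H)\le (s-1)-(s-|Z|)=|Z|-1 .
\]
Applying Theorem~\ref{thm:C} to $\omega_1,\ldots,\omega_{s-1}$ then yields partitions with $|G_j|=|Z_j|$, $r\ge2$, and $Z_j$ $\omega_c$-inward for all $c\in G_j\setminus\{0\}$. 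The inequality above gives the required bound for each such $Z_j$.

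Alternatively, this can be phrased via Corollary~\ref{cor:split} with the perturbed weights $\mu_c(v)=\tfrac1s-e_c(v)+\tfrac{s-1}{s}\cdot\ldots$. However, the direct argument avoids fiddling with fractional thresholds. The only delicate point is the integrality dichotomy, which guarantees that the orientation is well defined and that the complement side has no slack. Once this is checked, the rest is a routine summation.
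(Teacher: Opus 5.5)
Your proof is correct. It takes the route you set aside, in a different packaging: you inline the orientation argument with an integer threshold, whereas the paper uses Corollary~\ref{cor:split} as a black box.

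The paper applies Corollary~\ref{cor:split} to $\mu_c(v)=1-e_c(v)-1/s$, which has total mass $s-(s-1)-1=0$. It then rounds: if $\mu_c(Z)\ge0$, the integer $|Z|-e_c(Z)$ is at least $|Z|/s>0$, hence at least $1$.

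The two arguments produce exactly the same orientations. For a side $A$ of an edge, $0<|A|/s<1$, so $\mu_c(A)\ge0$ holds precisely when $e_c(A)\le|A|-1$. In particular, no ties arise and the tie-breaking clause of Corollary~\ref{cor:split} is never used, which matches your observation that exactly one side has slack.

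The paper's version is shorter and reuses the real-valued statement, which also serves the remark on probability distributions. Yours is self-contained, avoids fractions, and is precisely the slack orientation described informally in the proof-strategy section.

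The alternative you began in your last paragraph should read $\mu_c(v)=1-e_c(v)-1/s$, i.e.\ $\frac{s-1}{s}-e_c(v)$. The fractional bookkeeping you wanted to avoid is only the single integrality step above.
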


\begin{proof}
Apply Corollary~\ref{cor:split} to $\mu_c(v)=1-e_c(v)-1/s$, for which $\mu_c(W)=s-(s-1)-1=0$. If $\mu_c(Z)\ge0$, then the integer $|Z|-e_c(Z)$ is at least $|Z|/s>0$, hence at least $1$.
\end{proof}

\begin{remark}
If $\nu_1,\ldots,\nu_{s-1}$ are probability distributions on $W$, then Corollary~\ref{cor:split} with \mbox{$\mu_c(v)=1/s-\nu_c(v)$} yields a connected partition with $r\ge2$ parts and a grouping of the indices with $|G_j|=|Z_j|$ in which $\nu_c(Z_j)\le|Z_j|/s$ for all $c\in G_j\setminus\{0\}$; with $\mu_c(v)=\nu_c(v)-1/s$ one obtains $\nu_c(Z_j)\ge|Z_j|/s$ instead. In words, the vertices can be split into $r\ge2$ connected regions and the indices into groups of the same sizes, so that every index $c\ne0$ in a group of size $m$ gives the region of its group measure at most (or, in the second version, at least) $m/s$, the proportional share of the whole group.
\end{remark}

\subsection{Proof of the rainbow partition theorem}

\begin{proof}[Proof of Theorem~\ref{thm:Aprime}]
We prove, by strong induction on $k$, that for every tree $T$, every set $C$ of $k$ colours, and all edge sets $E_c\subseteq E(T)$ with $|E_c|\le k-1$, the instance $(T,(E_c)_{c\in C})$ has a solution. For $k=1$, the only edge set is empty, and $B_c=V(T)$ is a solution.

Let $k\ge2$ and assume the statement for all smaller numbers of colours. By Lemma~\ref{lem:normalization}, we may assume that the instance is saturated. Fix a colour and denote it by $0$. Let $\mathcal H=\Comp(T-E_0)$, and let $\pi\colon V(T)\to\mathcal H$ be the quotient map. Since $|E_0|=k-1$, there are exactly $k$ members of $\mathcal H$, and by Lemma~\ref{lem:tree}(iii),(iv) the quotient $Q=T/\mathcal H$ is a tree on $k$ vertices whose edges correspond bijectively to $E_0$.

Every edge has exactly one colour. Hence, an edge of colour $c\ne0$ does not belong to $E_0$ and has both endpoints in one member of $\mathcal H$. For $c\in C\setminus\{0\}$ and $H\in\mathcal H$ put
\begin{equation}
 e_c(H)=|E_c\cap E(T[H])|,
 \qquad\text{so that}\qquad
 \sum_{H\in\mathcal H}e_c(H)=|E_c|=k-1.
 \label{eq:counts}
\end{equation}
Apply Corollary~\ref{cor:integer} to $Q$, with $s=k$, the colours of $C\setminus\{0\}$ in the role of $[s-1]$, and the colour $0$ in the role of the free index. We obtain a connected partition $\{Z_1,\ldots,Z_r\}$ of $V(Q)=\mathcal H$ with $r\ge2$ and a partition $C=G_1\cup\cdots\cup G_r$ with $|G_j|=|Z_j|$ and $e_c(Z_j)\le|Z_j|-1$ for all $c\in G_j\setminus\{0\}$. Put
\[
 U_j=\pi^{-1}(Z_j)=\bigcup_{H\in Z_j}H .
\]
By Lemma~\ref{lem:tree}(iii), the sets $U_1,\ldots,U_r$ are connected and form a partition of $V(T)$, and $T_j=T[U_j]$ is a tree.

Consider the instance on $T_j$ with colour set $G_j$ and edge sets $E_c\cap E(T_j)$, $c\in G_j$; colours outside $G_j$ are ignored. Let $c\in G_j\setminus\{0\}$. Every edge of colour $c$ in $T_j$ has both endpoints in a member $H$ of $\mathcal H$, and $H\subseteq U_j$ because $U_j$ is a union of members of $\mathcal H$; thus $H\in Z_j$. Conversely, every edge of $T[H]$ with $H\in Z_j$ is an edge of $T_j$. Therefore
\[
 |E_c\cap E(T_j)|=e_c(Z_j)\le|Z_j|-1=|G_j|-1.
\]
If $0\in G_j$, then the edges of colour $0$ in $T_j$ join distinct members of $\mathcal H$ lying in $Z_j$, and they correspond bijectively to the edges of the tree $Q[Z_j]$. Hence
\[
 |E_0\cap E(T_j)|=|Z_j|-1=|G_j|-1.
\]
Thus, every local instance is admissible with respect to its own colour set. Since $r\ge2$ and all parts are nonempty, $1\le|G_j|\le k-1$, and the induction hypothesis gives a solution $(B_c)_{c\in G_j}$ of the instance on $T_j$.

Every colour belongs to exactly one group, so the sets $B_c$, $c\in C$, form an allocation of $V(T)$. Every nonempty $B_c$ is connected in $T_j$, hence in $T$, and since $T_j$ is an induced subgraph of $T$, we have $T[B_c]=T_j[B_c]$, which gives~\eqref{eq:avoid}. This completes the induction. The covering statement follows from Proposition~\ref{prop:equivalence}.
\end{proof}

\begin{proof}[Proof of Theorem~\ref{thm:A}]
The first statement follows from Theorem~\ref{thm:Aprime} and Proposition~\ref{prop:equivalence}, and the statement about nonempty pieces from Proposition~\ref{prop:nonempty}.
\end{proof}

\begin{example}
\label{ex:reduction}
Consider the instance of Figure~\ref{fig:spider}, which is saturated for $k=3$. Take $a$ as the colour $0$. The components of $T-E_a$ are $H_1=\{y_1,z_1\}$, $H_2=\{x,y_2,z_2,y_3\}$, and $H_3=\{z_3\}$, and the quotient $Q$ is the path $H_1H_2H_3$. The counts~\eqref{eq:counts} are $(e_b(H_1),e_b(H_2),e_b(H_3))=(1,1,0)$ and $(e_c(H_1),e_c(H_2),e_c(H_3))=(0,2,0)$. The orientations constructed in the proofs of Corollaries~\ref{cor:split} and~\ref{cor:integer} are $H_1\to H_2\to H_3$ for $b$ and $H_1\leftarrow H_2\to H_3$ for $c$. The partition $\bigl\{\{H_1\},\{H_2,H_3\}\bigr\}$ of $V(Q)$ with groups $\{c\}$ and $\{a,b\}$ satisfies Theorem~\ref{thm:C}: the set $\{H_1\}$ is inward for the orientation of $c$, and $\{H_2,H_3\}$ is inward for the orientation of $b$. The local instances are $T[\{y_1,z_1\}]$ with colour $c$, which contains no edge of colour $c$, and $T[\{x,y_2,z_2,y_3,z_3\}]$ with colours $a$ and $b$, each occurring once. Choosing the solutions $B_c=\{y_1,z_1\}$ and $B_a=\{x,y_2,z_2\}$, $B_b=\{y_3,z_3\}$ of the local instances gives the rainbow partition of Figure~\ref{fig:spider}(a); see Figure~\ref{fig:reduction}.
\end{example}

\begin{figure}[t]
\centering
\begin{tikzpicture}[scale=0.66,
  vtx/.style={circle,fill=black,inner sep=1.1pt},
  lab/.style={inner sep=0.6pt,font=\scriptsize},
  qn/.style={circle,draw,minimum size=17pt,inner sep=0pt,font=\scriptsize}]
\begin{scope}
\coordinate (x) at (0,0);
\coordinate (y1) at (90:1.15);  \coordinate (z1) at (90:2.3);
\coordinate (y2) at (210:1.15); \coordinate (z2) at (210:2.3);
\coordinate (y3) at (330:1.15); \coordinate (z3) at (330:2.3);
\draw[line width=15pt,line cap=round,gray!25] (y1)--(z1);
\draw[line width=15pt,line cap=round,line join=round,gray!25] (z2)--(x)--(y3);
\fill[gray!25] (z3) circle (7.5pt);
\draw[very thick] (x)--(y1) node[lab,midway,left=1pt]{$a$};
\draw (y1)--(z1) node[lab,midway,left=1pt]{$b$};
\draw (x)--(y2) node[lab,midway,above=1pt]{$b$};
\draw (y2)--(z2) node[lab,midway,above=1pt]{$c$};
\draw (x)--(y3) node[lab,midway,above=1pt]{$c$};
\draw[very thick] (y3)--(z3) node[lab,midway,above=1pt]{$a$};
\foreach \v in {x,y1,z1,y2,z2,y3,z3} \node[vtx] at (\v) {};
\node[font=\scriptsize] at ($(z1)+(0.75,0)$) {$H_1$};
\node[font=\scriptsize] at ($(z2)+(0,-0.6)$) {$H_2$};
\node[font=\scriptsize] at ($(z3)+(0,-0.6)$) {$H_3$};
\node at (0,-3.0) {\small (a)};
\end{scope}
\begin{scope}[xshift=5.2cm,yshift=0.9cm]
\node[font=\scriptsize,anchor=east] at (-0.4,0) {$b$:};
\node[qn] (b1) at (0.5,0) {$1$}; \node[qn] (b2) at (2.3,0) {$1$}; \node[qn] (b3) at (4.1,0) {$0$};
\draw[-{Stealth[length=5pt]},thick] (b1)--(b2); \draw[-{Stealth[length=5pt]},thick] (b2)--(b3);
\node[font=\scriptsize,anchor=east] at (-0.4,-1.6) {$c$:};
\node[qn] (c1) at (0.5,-1.6) {$0$}; \node[qn] (c2) at (2.3,-1.6) {$2$}; \node[qn] (c3) at (4.1,-1.6) {$0$};
\draw[-{Stealth[length=5pt]},thick] (c2)--(c1); \draw[-{Stealth[length=5pt]},thick] (c2)--(c3);
\node[font=\scriptsize] at (0.5,0.62) {$H_1$}; \node[font=\scriptsize] at (2.3,0.62) {$H_2$}; \node[font=\scriptsize] at (4.1,0.62) {$H_3$};
\draw[densely dashed] (1.4,0.75)--(1.4,-2.3);
\node[font=\scriptsize] at (0.5,-2.45) {$\{c\}$}; \node[font=\scriptsize] at (3.2,-2.45) {$\{a,b\}$};
\node at (2.3,-3.9) {\small (b)};
\end{scope}
\begin{scope}[xshift=13.8cm]
\coordinate (x) at (0,0);
\coordinate (y1) at (90:1.15);  \coordinate (z1) at (90:2.3);
\coordinate (y2) at (210:1.15); \coordinate (z2) at (210:2.3);
\coordinate (y3) at (330:1.15); \coordinate (z3) at (330:2.3);
\draw[line width=13pt,line cap=round,red!22] (x)--(z2);
\draw[line width=13pt,line cap=round,blue!22] (y3)--(z3);
\draw[line width=13pt,line cap=round,green!30] (y1)--(z1);
\draw[densely dashed,gray!80] (x)--(y1);
\draw (y1)--(z1) node[lab,midway,left=1pt]{$b$};
\draw (x)--(y2) node[lab,midway,above=1pt]{$b$};
\draw (y2)--(z2) node[lab,midway,above=1pt]{$c$};
\draw (x)--(y3) node[lab,midway,above=1pt]{$c$};
\draw (y3)--(z3) node[lab,midway,above=1pt]{$a$};
\foreach \v in {x,y1,z1,y2,z2,y3,z3} \node[vtx] at (\v) {};
\node[font=\scriptsize] at ($(z1)+(0.75,0)$) {$B_c$};
\node[font=\scriptsize] at ($(z2)+(0,-0.6)$) {$B_a$};
\node[font=\scriptsize] at ($(z3)+(0,-0.6)$) {$B_b$};
\node at (0,-3.0) {\small (c)};
\end{scope}
\end{tikzpicture}
\caption{The reduction of Example~\ref{ex:reduction}. (a) The edges of colour $a$ (thick) and the components $H_1,H_2,H_3$ of $T-E_a$ (shaded). (b) The quotient path $H_1H_2H_3$ with the counts $e_b$ and $e_c$ written in the nodes, the orientations for $b$ and $c$, and the splitting $\{H_1\}\mid\{H_2,H_3\}$ with groups $\{c\}$ and $\{a,b\}$. (c) The local instances on $\{y_1,z_1\}$ and on $\{x,y_2,z_2,y_3,z_3\}$ and their solutions. Vertices are placed and named as in Figure~\ref{fig:spider}; in~(c) the dashed edge $xy_1$ is the edge of colour $a$ joining the two local instances.}
\label{fig:reduction}
\end{figure}

\section{Inward partitions of oriented trees}
\label{sec:orientation}

Throughout this section, $Q=(W,D)$ is a tree with $s\ge2$ vertices.

\subsection{Weighted reversal costs}
Let $\omega$ be an orientation of $Q$. For $f\in D$, let $H_\omega(f)$ be the component of $Q-f$ containing the tail of $f$. For $w=(w_f)_{f\in D}\in\R_{\ge0}^D$ and $y\in W$, define
\begin{equation}
 \phi_w(\omega,y)=\sum_{f\in D}w_f\,\ind{y\in H_\omega(f)},
 \qquad
 M_w(\omega)=\argmin_{y\in W}\phi_w(\omega,y),
 \label{eq:cost}
\end{equation}
where $\ind{\cdot}$ is the indicator function. An edge $f$ points away from $y$ exactly when $y$ lies on the tail side of $f$. Thus $\phi_w(\omega,y)$ is the total weight of the edges that must be reversed to direct every edge of $Q$ towards $y$. The set $M_w(\omega)$ is nonempty. Let $\supp(w)=\{f\in D:w_f>0\}$, and let
\[
 \Delta_D=\Bigl\{w\in\R_{\ge0}^D:\sum_{f\in D}w_f=1\Bigr\}
\]
be the simplex of normalised weights.

\begin{lemma}
\label{lem:min}
Let $\omega$ be an orientation of $Q$ and let $w\in\R_{\ge0}^D$.
\begin{enumerate}
\item If $f\in D$ is directed from $u$ to $v$, then $\phi_w(\omega,v)=\phi_w(\omega,u)-w_f$.
\item Let $J=\supp(w)$ and $P\in\CC J$. If $P\cap M_w(\omega)\ne\varnothing$, then $P\subseteq M_w(\omega)$ and $P$ is $\omega$-inward.
\item For every $y\in W$, the set $\{w\in\Delta_D:y\in M_w(\omega)\}$ is closed.
\end{enumerate}
\end{lemma}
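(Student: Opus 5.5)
The three parts are elementary and can be proved in order. Each one uses only the definition~\eqref{eq:cost} of $\phi_w$ and the structure of $\CC J$ from Lemma~\ref{lem:tree}(iv).

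\emph{Part (i).} Let $f$ be directed from $u$ to $v$. We compare the indicator terms for $y=u$ and $y=v$, edge by edge.
\begin{itemize}
\item For an edge $g\ne f$, the vertices $u$ and $v$ lie in the same component of $Q-g$, since the edge $f$ joins them there. So $\ind{u\in H_\omega(g)}=\ind{v\in H_\omega(g)}$.
\item For the edge $f$ itself, $H_\omega(f)$ is the side containing the tail $u$. So the indicator is $1$ at $u$ and $0$ at $v$.
\end{itemize}
Subtracting gives $\phi_w(\omega,v)=\phi_w(\omega,u)-w_f$.

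\emph{Part (ii).} Let $J=\supp(w)$ and $P\in\CC J$. By Lemma~\ref{lem:tree}(iv) every edge inside $P$ lies outside $J$, so it has weight $0$. By (i), $\phi_w(\omega,\cdot)$ is then constant along $Q[P]$, and $Q[P]$ is connected. Hence if one vertex of $P$ is a minimiser, then all of $P$ lies in $M_w(\omega)$.

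For inwardness, take $f\in\bd P$. By Lemma~\ref{lem:tree}(iv) we have $f\in J$, so $w_f>0$. Suppose $f$ were directed from its endpoint $u\in P$ to its outer endpoint $v$. Then (i) gives $\phi_w(\omega,v)=\phi_w(\omega,u)-w_f<\phi_w(\omega,u)$. This contradicts $u\in M_w(\omega)$. So every boundary edge points into $P$, and $P$ is connected as a member of $\CC J$. Hence $P$ is $\omega$-inward.

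\emph{Part (iii).} For a fixed $\omega$ and a fixed $y$, the map $w\mapsto\phi_w(\omega,y)$ is linear in $w$. The set in question is
\[
 \{w\in\Delta_D:\ \phi_w(\omega,y)\le\phi_w(\omega,x)\text{ for all }x\in W\}.
\]
This is a finite intersection of closed half-spaces with the closed simplex, so it is closed.

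None of the steps is a real obstacle. The only point needing care is in (ii): one must use that boundary edges carry strictly positive weight, which is exactly the condition $\bd P\subseteq J=\supp(w)$ from Lemma~\ref{lem:tree}(iv).
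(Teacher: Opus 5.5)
Your proof is correct and follows the paper's argument essentially step by step. Part (i) compares indicator terms edge by edge, part (ii) uses constancy of $\phi_w(\omega,\cdot)$ on $P$ across zero-weight internal edges and a strict decrease across a positive-weight boundary edge directed outward, and part (iii) writes the condition as a finite system of non-strict linear inequalities in $w$.
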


\begin{proof}
(i) The vertices $u$ and $v$ lie on the same side of every edge $g\ne f$, and $u\in H_\omega(f)$ while $v\notin H_\omega(f)$.

(ii) The internal edges of $P$ lie outside $J$ by Lemma~\ref{lem:tree}(iv), so they have weight zero, and by~(i) and the connectivity of $P$, the function $\phi_w(\omega,\cdot)$ is constant on $P$. Since $P$ meets $M_w(\omega)$, we get $P\subseteq M_w(\omega)$. Every edge of $\bd P$ lies in $J$ and has positive weight. If such an edge were directed from $u\in P$ to $v\notin P$, then~(i) would give $\phi_w(\omega,v)<\phi_w(\omega,u)$, contradicting $u\in M_w(\omega)$.

(iii) The condition $y\in M_w(\omega)$ means that $\phi_w(\omega,y)\le\phi_w(\omega,u)$ for all $u\in W$, a finite system of non-strict linear inequalities in $w$.
\end{proof}

\subsection{The common-weight theorem}

\begin{theorem}[Common weights]
\label{thm:weights}
Let $\omega_1,\ldots,\omega_{s-1}$ be orientations of $Q$, not necessarily distinct. There exist $w^*\in\Delta_D$ and an injective map $\beta\colon[s-1]\to W$ such that
\[
 \beta(c)\in M_{w^*}(\omega_c)\qquad(c\in[s-1]).
\]
\end{theorem}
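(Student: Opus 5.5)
The plan is to apply Gale's colourful KKM theorem on $\Delta_D$. Since $|D|=s-1$, the simplex $\Delta_D$ has exactly $s-1$ vertices, one per edge, and there are exactly $s-1$ orientations, which is the number of covers the colourful theorem needs.

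\emph{Labelling.} Fix once and for all a \emph{leaf-elimination order} $f_1<f_2<\cdots<f_{s-1}$ of $D$. I would take it to mean that each $f_i$ is a pendant edge of the subtree spanned by $f_i,\ldots,f_{s-1}$, equivalently that the edges $f_i,\dots,f_{s-1}$ always span a subtree. For $w\in\Delta_D$ put $J=\supp(w)\neq\varnothing$ and let $g=\max J$. For a $J$-part $P$, define $\lambda_J(P)$ as the first edge of the path in $Q/\CC J$ from $P$ towards $g$; if $P$ is an endpoint side of $g$, this edge is $g$ itself. By Lemma~\ref{lem:tree}(ii),(iv) this label lies in $J\cap\bd P$.

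For an orientation $\omega_c$, choose a $J$-part $P\subseteq M_w(\omega_c)$, which exists by Lemma~\ref{lem:min}(ii), say the one with the smallest label. Set $\ell_c(w)=\lambda_J(P)$, and let $C^c_f$ be the closure of $\{w:\ell_c(w)=f\}$. Because $\ell_c(w)\in\supp(w)$, every face $\operatorname{conv}\{e_f:f\in F\}$ is covered by $\bigcup_{f\in F}C^c_f$, so each family $(C^c_f)_f$ is a closed KKM cover. Gale's theorem then yields $w^*\in\Delta_D$ and a bijection $\pi\colon[s-1]\to D$ with $w^*\in C^c_{\pi(c)}$ for every $c$.

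\emph{Passing to the limit.} Fix $c$. Since there are finitely many supports and finitely many parts, we can pick a sequence $w^{(m)}\to w^*$ along which $\ell_c(w^{(m)})=\pi(c)$, the support $J_c$ is constant, and the chosen part $P_c$ is constant. For $m$ large, $J_c\supseteq J^*:=\supp(w^*)$. Because $P_c\subseteq M_{w^{(m)}}(\omega_c)$ for all $m$, Lemma~\ref{lem:min}(iii) gives $P_c\subseteq M_{w^*}(\omega_c)$. By Lemma~\ref{lem:tree}(iv), $P_c$ lies in a unique $J^*$-part $Z_c$. Lemma~\ref{lem:min}(ii) then gives $Z_c\subseteq M_{w^*}(\omega_c)$, so any vertex of $Z_c$ is an admissible value for $\beta(c)$.

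\emph{Counting.} It remains to show that for each $J^*$-part $Z$, at most $|Z|$ indices $c$ have $Z_c=Z$; distinct vertices of $Z$ can then be assigned to them, which makes $\beta$ injective. Since $\pi$ is injective, it suffices to prove the compatibility statement: there is a set $L(Z)$ of exactly $|Z|$ edges such that for every $J\supseteq J^*$ and every $J$-part $P\subseteq Z$ we have $\lambda_J(P)\in L(Z)$. The candidate is $L(Z)=\inn Z\cup\{f_Z\}$, where $f_Z$ is the first edge of $\bd Z$ on the way from $Z$ to $\max J^*$, or the edge $\max J^*$ itself if $Z$ is incident to it. This set has $(|Z|-1)+1$ edges. (If $J^*=D$, every part is a single vertex and the statement is immediate.)

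\emph{Main obstacle.} The obstacle is this compatibility lemma, because $g=\max J$ can exceed $\max J^*$ and lie anywhere in the tree. If $g\in\inn Z$, every path from $P\subseteq Z$ towards $g$ starts inside $Z$, so the label is internal. If $g\notin Z$, the path from $P$ to $g$ either starts with an internal edge of $Z$, or leaves $Z$ at once through the boundary edge on the way to $g$. The danger is that this edge differs from $f_Z$.

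This is exactly where the leaf-elimination order must be used. I would try to show that the edges of $J\setminus J^*$ exceeding $\max J^*$ lie on the same side of $Z$ as $\max J^*$. The intended reason is that the larger edges always span a subtree containing $\max J^*$, hence the route from $Z$ to $g$ exits through $f_Z$. If this fails for arbitrary leaf-elimination orders, I would instead modify the label rule, for instance routing towards the largest edge of $J^*$-type support, until the lemma holds. With the lemma in hand, the counting above finishes the proof.
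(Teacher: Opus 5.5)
Your proposal follows the paper's proof step for step. Your label is the paper's $\Lambda(P,J)$; the paper gives the same quotient-tree description. Your $f_Z$ coincides with the designated boundary edge $b_{J^*}(Z)$, so $L(Z)$ is exactly the slot set $S_{J^*}(Z)$. The Gale, limit and counting steps are the same. Choosing the part with the smallest label rather than the part containing the least minimiser makes no difference, since no continuity is used. What is missing is a proof of the compatibility claim. This is the paper's Lemma~\ref{lem:compatibility}, the one tree-specific step and the heart of the argument. You state it correctly but leave it as something you \emph{would try to show}, with a fallback. Your intended reason is the right one. The claim holds for every leaf-elimination order, so no change of the label rule is needed.

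Here is how to close it. Let $f_t=\max J^*=\ell_tp_t$, where $\ell_t$ is the leaf of the subtree $Q_t$ spanned by $f_t,\ldots,f_{s-1}$. Let $Z^*$ and $Z^\circ$ be the $J^*$-parts containing $p_t$ and $\ell_t$, and let $g=\max J$.

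First, if $g\ne f_t$, then $g>f_t$ is an edge of the subtree $Q_{t+1}$ spanned by $f_{t+1},\ldots,f_{s-1}$. This subtree contains $p_t$ and no edge of $J^*$, so $V(Q_{t+1})\subseteq Z^*$. Hence $g\in\inn{Z^*}$ and $V(Q_t)\subseteq Z^*\cup Z^\circ$; this is Lemma~\ref{lem:largest}.

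Now let $P\subseteq Z$ be a $J$-part.
\begin{itemize}
\item If $g\in\inn Z$, the label is internal, because $Z$ is connected.
\item If $g\in\bd Z$, then $g\in J^*$ forces $g=f_t$, so $f_Z=g$. The label is then $g$ itself or the first edge of a path inside $Z$.
\item If $g$ has no endpoint in $Z$, the label is internal or is the edge through which the path from $Z$ towards $g$ leaves $Z$. For $g=f_t$ that edge is $f_Z$ by definition. For $g>f_t$ we have $Z\ne Z^*$. If $Z=Z^\circ$, then $f_Z=f_t$, and the path from $Z$ to $g\in\inn{Z^*}$ leaves $Z$ through $f_t$. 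Otherwise $Z$ is disjoint from the connected set $V(Q_t)$, which contains the endpoints of both $g$ and $f_t$, so both are reached through the same boundary edge $f_Z$.
\end{itemize}

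This is exactly where the leaf-elimination order is used. Remark~\ref{rem:labels}(b) shows that with an arbitrary order the claim is false.
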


\begin{proof}[Proof of Theorem~\ref{thm:C} from Theorem~\ref{thm:weights}]
Let $w^*$ and $\beta$ be as in Theorem~\ref{thm:weights}. The set $F=\supp(w^*)$ is nonempty, so the $F$-parts $Z_1,\ldots,Z_r$ form a connected partition of $W$ with $r=|F|+1\ge2$ (Lemma~\ref{lem:tree}(iv)). Since $|W|=s$, exactly one vertex is not in the image of $\beta$; extend $\beta$ to a bijection $\gamma\colon\{0,1,\ldots,s-1\}\to W$ by sending $0$ to this vertex, and put $G_j=\gamma^{-1}(Z_j)$. Then $|G_j|=|Z_j|$. If $c\in G_j\setminus\{0\}$, then $\gamma(c)=\beta(c)\in Z_j\cap M_{w^*}(\omega_c)$, and Lemma~\ref{lem:min}(ii) shows that $Z_j$ is $\omega_c$-inward.
\end{proof}

\begin{remark}
\label{rem:C-sharp}
(a) \emph{The bound $s-1$ and the free index.} If Theorem~\ref{thm:C} were stated for $s$ orientations $\omega_1=\cdots=\omega_s=\omega$, with $\{0,\ldots,s-1\}$ replaced by $[s]$, then every part $Z_j$ would receive at least one index and would have to be $\omega$-inward. This is impossible for $r\ge2$: an edge of $Q$ joining two parts is directed away from one of them. Similarly, Theorem~\ref{thm:weights} fails for $s$ orientations equal to $\omega$, since an injection $[s]\to M_w(\omega)$ forces $M_w(\omega)=W$, whereas $u\notin M_w(\omega)$ whenever an edge of positive weight is directed away from $u$, by Lemma~\ref{lem:min}(i).

(b) \emph{Vanishing weights are necessary.} If $s\ge3$ and every $\omega_c$ directs all edges towards a common root, then for weights that are positive on all edges, the root is the only minimiser for every orientation. Thus, the weights $w^*$ in Theorem~\ref{thm:weights} must, in general, vanish on some edges, and the parts obtained in the proof of Theorem~\ref{thm:C} are the components of the spanning subgraph of $Q$ formed by the edges of weight zero.

(c) \emph{Rental harmony.} Theorem~\ref{thm:weights} has the flavour of a rental-harmony theorem~\cite{AsadaEtAl2018,FrickEtAl2019,Su1999}: the vertices of $Q$ are rooms, the orientations are $s-1$ tenants, a weight vector plays the role of a price vector, and each tenant prefers the rooms minimising $\phi_w$. However, the parameter space $\Delta_D$ has dimension $s-2$ rather than $s-1$, and the orientations determine the preferences. We do not know how to derive Theorem~\ref{thm:weights} directly from rental-harmony theorems.
\end{remark}

\subsection{A labelling compatible with coarsening}
\label{sec:labels}
The constructions in this subsection depend only on the tree $Q$, not on orientations.

\parhead{Leaf-elimination order.}
Put $Q_1=Q$. For $t=1,\ldots,s-1$, the graph $Q_t$ is a tree with $s-t+1\ge2$ vertices; choose a leaf $\ell_t$ of $Q_t$, let $f_t=\ell_tp_t$ be the edge of $Q_t$ at $\ell_t$, and put $Q_{t+1}=Q_t-\ell_t$. Then $f_1,\ldots,f_{s-1}$ is an enumeration of $D$,
\begin{equation}
 E(Q_t)=\{f_t,f_{t+1},\ldots,f_{s-1}\},
 \qquad\text{and}\qquad
 p_t\in V(Q_{t+1}).
 \label{eq:leaf-order}
\end{equation}
We order $D$ by $f_1<f_2<\cdots<f_{s-1}$. For nonempty $J\subseteq D$, $\max J$ always refers to this order.

For nonempty $F\subseteq D$ with $\max F=f_t$, let
\[
 Z^*(F)=Z_F(p_t)
 \qquad\text{and}\qquad
 Z^\circ(F)=Z_F(\ell_t).
\]
These two $F$-parts are distinct and joined by $f_t$, by Lemma~\ref{lem:tree}(iv).

\begin{lemma}[Largest edges]
\label{lem:largest}
If $\varnothing\ne F\subseteq J\subseteq D$, then $\max J=\max F$ or $\max J\in\inn{Z^*(F)}$.
\end{lemma}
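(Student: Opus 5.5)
Let $f_t=\max F$ and $f_u=\max J$. Since $F\subseteq J$, we have $f_u\ge f_t$, i.e.\ $u\ge t$. If $u=t$ we are in the first alternative. So I assume $u>t$ and aim to show that both endpoints of $f_u$ lie in $Z^*(F)=Z_F(p_t)$. Equivalently, the path in $Q$ from $p_t$ to each endpoint of $f_u$ should avoid every edge of $F$. The whole argument uses only the leaf-elimination structure~\eqref{eq:leaf-order} and Lemma~\ref{lem:tree}(iv).

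The key observation is about $Q_{t+1}$. By~\eqref{eq:leaf-order}, $E(Q_{t+1})=\{f_{t+1},\ldots,f_{s-1}\}$, and $Q_{t+1}$ is a subtree of $Q$ containing $p_t$. Since $u\ge t+1$, the edge $f_u$ is an edge of $Q_{t+1}$, so both of its endpoints are vertices of $Q_{t+1}$. Take the path in $Q$ from $p_t$ to either endpoint of $f_u$. Because $Q_{t+1}$ is a connected subtree of $Q$, this is the unique path, and it lies inside $Q_{t+1}$ by Lemma~\ref{lem:tree}(i). Hence every edge on it has index at least $t+1$.

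Every edge of $F$ has index at most $t$, because $\max F=f_t$. So this path uses no edge of $F$ and lies in one component of $Q-F$, namely $Z_F(p_t)=Z^*(F)$. Thus both endpoints of $f_u$ lie in $Z^*(F)$, that is, $f_u\in\inn{Z^*(F)}$.

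There is no real obstacle here. The only point needing care is the correct use of~\eqref{eq:leaf-order}: the inclusion $p_t\in V(Q_{t+1})$, and the fact that $Q_{t+1}$ consists exactly of the edges with index greater than $t$. Note also that the hypothesis $J\subseteq D$ matters only through $f_u\in D$; beyond that, the argument uses only $F\subseteq J$ and $u>t$.
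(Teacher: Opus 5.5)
Your proof is correct and is essentially the paper's own argument: both assume $u>t$, observe that $Q_{t+1}$ contains $p_t$ and has only edges of index greater than $t$, hence none in $F$, so $V(Q_{t+1})$ lies in $Z_F(p_t)=Z^*(F)$ and contains both endpoints of $f_u$. Your explicit path-based justification just spells out, via Lemma~\ref{lem:tree}(i), why $V(Q_{t+1})$ lies in a single $F$-part.
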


\begin{proof}
Let $f_t=\max F$ and $f_u=\max J$. Then $u\ge t$ because $F\subseteq J$; assume $u>t$. The tree $Q_{t+1}$ contains $p_t$, and its edges $f_{t+1},\ldots,f_{s-1}$ do not belong to $F$. Hence $V(Q_{t+1})$ is contained in one $F$-part, namely $Z^*(F)$. Since $f_u\in E(Q_{t+1})$ by~\eqref{eq:leaf-order}, both endpoints of $f_u$ lie in $Z^*(F)$.
\end{proof}

\parhead{Designated boundary edges and slot sets.}
Let $\varnothing\ne F\subseteq D$ and $Z\in\CC F$. Define an edge $b_F(Z)$ as follows. If $Z=Z^*(F)$, put $b_F(Z)=\max F$. If $Z\ne Z^*(F)$, then $Z^*(F)$ is connected and disjoint from $Z$, so it lies in a component $H$ of $Q-Z$, and we put $b_F(Z)=f_H$, the edge joining $Z$ to $H$ (Lemma~\ref{lem:tree}(ii)); informally, $b_F(Z)$ is the first edge on the way from $Z$ towards $Z^*(F)$. In both cases $b_F(Z)\in\bd Z$. Moreover, $b_F(Z^\circ(F))=\max F$: the component of $Q-Z^\circ(F)$ containing $Z^*(F)$ contains $p_t$, and $f_t=\ell_tp_t$ joins it to $Z^\circ(F)$. The \emph{slot set} of $Z$ is
\begin{equation}
 S_F(Z)=\inn Z\cup\{b_F(Z)\}.
 \label{eq:slots}
\end{equation}
Since $Q[Z]$ is a tree and $b_F(Z)$ is a boundary edge,
\begin{equation}
 |S_F(Z)|=(|Z|-1)+1=|Z| .
 \label{eq:capacity}
\end{equation}
Slot sets of different $F$-parts need not be disjoint; only~\eqref{eq:capacity} will be used.

\parhead{The label of a part.}
Let $\varnothing\ne J\subseteq D$ and $P\in\CC J$, and write $f^*=\max J=ab$. Since $P$ is connected in $Q-J$, which is a subgraph of $Q-f^*$, the set $P$ lies in one component of $Q-f^*$; we name the endpoints of $f^*$ so that $a$ lies in this component. Define
\[
 \Lambda(P,J)=
 \begin{cases}
  f^* & \text{if } a\in P,\\
  f_H & \text{if } a\notin P,\ \text{where $H$ is the component of $Q-P$ containing $a$.}
 \end{cases}
\]
In the second case, $\Lambda(P,J)$ is the first edge of the path from $P$ to $a$ (Lemma~\ref{lem:tree}(ii)). In both cases
\begin{equation}
 \Lambda(P,J)\in\bd P\subseteq J .
 \label{eq:label-support}
\end{equation}
In terms of the quotient tree $Q/\CC J$, whose edges are identified with $J$, the label of the node $P$ is the first edge on the path from $P$ towards the edge $\max J$, and it is $\max J$ itself if $P$ is incident to $\max J$.

\begin{lemma}[Compatibility under coarsening]
\label{lem:compatibility}
Let $\varnothing\ne F\subseteq J\subseteq D$, let $P\in\CC J$, and let $Z\in\CC F$ contain $P$. Then
\[
 \Lambda(P,J)\in S_F(Z).
\]
\end{lemma}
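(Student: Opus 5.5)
The plan is a direct case analysis. Write $\lambda=\Lambda(P,J)$, $f^*=\max J=ab$ with $a$ on the side of $f^*$ containing $P$, and $f_t=\max F=\ell_tp_t$.

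\emph{Reduction to a single claim.} By \eqref{eq:label-support}, $\lambda\in\bd P$, so $\lambda$ has an endpoint in $P\subseteq Z$. Either both endpoints of $\lambda$ lie in $Z$, so that $\lambda\in\inn Z\subseteq S_F(Z)$ and we are done, or $\lambda\in\bd Z$. So it suffices to show that in the second case $\lambda=b_F(Z)$.

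\emph{A path description of $\lambda$.} In every case, $\lambda$ is either $f^*$ itself (when $a\in P$) or the first edge of the path from $P$ to $a$. In the latter case, by Lemma~\ref{lem:tree}(ii) applied to $P$, the path from any vertex of $P$ to $a$ begins inside $P$ and then leaves through $\lambda$. If $\lambda\in\bd Z$, then Lemma~\ref{lem:tree}(ii) applied to $Z$ shows that $\lambda=f_{H'}$, where $H'$ is the component of $Q-Z$ containing $a$. Lemma~\ref{lem:largest} then gives two cases.

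\emph{Case 1: $f^*=f_t$.} Then $f^*\in F$, so $f^*$ is not internal to any $F$-part, and $Z$ lies on the $a$-side of $f^*$.
\begin{enumerate}
\item If $\lambda=f^*$, then $a\in Z$, so $Z$ is one of $Z^*(F),Z^\circ(F)$. Both have $b_F=\max F=f^*$ (shown before \eqref{eq:slots}).
\item Otherwise $a\notin Z$. Since $Z$ lies on the $a$-side of $f^*$, also $b\notin Z$, so $a$ and $b$ lie in the same component $H'$ of $Q-Z$. One of $a,b$ is $p_t$, hence $Z\neq Z^*(F)$. As $Z^*(F)$ is connected, disjoint from $Z$, and contains $p_t\in H'$, we get $Z^*(F)\subseteq H'$. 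Therefore $\lambda=f_{H'}=b_F(Z)$.
\end{enumerate}

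\emph{Case 2: $f^*\ne f_t$ and $f^*\in\inn{Z^*(F)}$.} Then $a\in Z^*(F)$.
\begin{enumerate}
\item If $Z=Z^*(F)$, both $P$ and $a$ lie in the connected set $Z$. By Lemma~\ref{lem:tree}(i) the path from $P$ to $a$ lies in $Z$, and so does $f^*$. Hence $\lambda\in\inn Z$ in either subcase.
\item If $Z\ne Z^*(F)$, then $a\notin Z$, so $\lambda\ne f^*$, since $\lambda=f^*$ would force $a\in P\subseteq Z$. Thus $\lambda=f_{H'}$ with $H'\ni a$. Since $Z^*(F)$ is connected, disjoint from $Z$, and contains $a$, we have $Z^*(F)\subseteq H'$, so $\lambda=b_F(Z)$.
\end{enumerate}

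\emph{Expected obstacle.} The only delicate point is Case~1(ii): we must check that $b$ is also outside $Z$, so that $a$ and $b$, and hence $p_t$, lie in the same component $H'$. This follows because $f^*\in F$ separates $F$-parts and $Z$ lies on $a$'s side. Everything else is a bookkeeping application of Lemma~\ref{lem:tree}(i),(ii),(iv) and Lemma~\ref{lem:largest}.
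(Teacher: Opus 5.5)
Your proof is correct and follows essentially the paper's argument. It uses the same ingredients: Lemma~\ref{lem:tree}(i),(ii),(iv), Lemma~\ref{lem:largest}, and $b_F(Z^\circ(F))=b_F(Z^*(F))=\max F$. You organise it by the dichotomy of Lemma~\ref{lem:largest} rather than by whether $a\in P$, and you should state explicitly that $\lambda\in\bd Z$ forces $a\notin Z$ before naming $H'$; otherwise the path from $P$ to $a$ stays in $Z$. In the case $f^*=f_t$, $a\notin Z$, your observation that $Z$ lies on the $a$-side of $f_t$, so that both endpoints of $f_t$ (one of them $p_t$) lie in $H'$, is a slightly tidier replacement for the paper's separate treatment of $a=p_t$ and $a=\ell_t$.
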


\begin{proof}
Write $f_t=\max F$, $Z^*=Z^*(F)$, $Z^\circ=Z^\circ(F)$, $f^*=\max J=ab$ with $a$ as in the definition of the label, and $\lambda=\Lambda(P,J)$.

\emph{Case 1: $a\in P$.} Then $\lambda=f^*$ and $a\in Z$. If $b\in Z$, then $\lambda\in\inn Z$. Otherwise $\lambda\in\bd Z\subseteq F$ by Lemma~\ref{lem:tree}(iv), and since $\lambda$ is the largest edge of $J\supseteq F$, we get $\lambda=f_t$. Then $Z$ contains an endpoint of $f_t$, so $Z\in\{Z^*,Z^\circ\}$, and in both cases $b_F(Z)=f_t=\lambda$.

\emph{Case 2: $a\notin P$.} Let $q$ be the vertex of $P$ closest to $a$. Then $\lambda$ is the first edge of the path from $q$ to $a$. If $a\in Z$, then this path lies in $Z$ by Lemma~\ref{lem:tree}(i), and $\lambda\in\inn Z$. Suppose that $a\notin Z$, and let $H'$ be the component of $Q-Z$ containing $a$. By Lemma~\ref{lem:tree}(ii), the path from $q$ to $a$ runs inside $Z$ until it leaves $Z$ through $f_{H'}$; hence $\lambda\in\inn Z$ or $\lambda=f_{H'}$. It therefore suffices to show that $Z\ne Z^*$ and $Z^*\subseteq H'$, since then $f_{H'}=b_F(Z)$.

By Lemma~\ref{lem:largest}, either $f^*\in\inn{Z^*}$ or $f^*=f_t$. If $f^*\in\inn{Z^*}$, or if $f^*=f_t$ and $a=p_t$, then $a\in Z^*$. As $a\notin Z$, we have $Z\ne Z^*$, and the connected set $Z^*$, being disjoint from $Z$, lies in the component $H'$ of $Q-Z$ containing $a$. It remains to consider $f^*=f_t$ and $a=\ell_t$. Then $a\in Z^\circ$, so $Z\ne Z^\circ$. By the choice of $a$, the set $P$ lies in the component of $Q-f_t$ containing $\ell_t$. Since $f_t\in F$, the $F$-part $Z$ is connected in $Q-f_t$, and as $P\subseteq Z$, it lies in the same component, whereas $Z^*\ni p_t$ lies in the other. Hence $Z\ne Z^*$. Finally, $Z^\circ\cup Z^*$ is connected (its two parts are joined by $f_t$), disjoint from $Z$, and contains $a$. It therefore lies in $H'$, and so $Z^*\subseteq H'$.
\end{proof}

\begin{figure}[t]
\centering
\begin{tikzpicture}[scale=1.05,
  vtx/.style={circle,fill=black,inner sep=1.3pt},
  idx/.style={font=\scriptsize,inner sep=1pt},
  lbl/.style={-{Stealth[length=5pt]},very thick,red!75!black}]
\newcommand{\labtree}{%
  \coordinate (a) at (0,0); \coordinate (b) at (1.3,0); \coordinate (c) at (2.6,0);
  \coordinate (d) at (2.6,-1.2); \coordinate (e) at (3.9,0);
  \coordinate (f) at (5.2,0.7); \coordinate (g) at (5.2,-0.7);}
\newcommand{\labnames}{%
  \node[above=2pt,font=\scriptsize] at (a) {$v_1$}; \node[above=2pt,font=\scriptsize] at (b) {$v_2$}; \node[above=2pt,font=\scriptsize] at (c) {$v_3$};
  \node[left=2pt,font=\scriptsize] at (d) {$v_4$}; \node[above=2pt,font=\scriptsize] at (e) {$v_5$};
  \node[right=2pt,font=\scriptsize] at (f) {$v_6$}; \node[right=2pt,font=\scriptsize] at (g) {$v_7$};}
\begin{scope}
\labtree
\draw (a)--(b) node[idx,midway,below]{$f_1$};
\draw (b)--(c) node[idx,midway,below]{$f_2$};
\draw (c)--(d) node[idx,midway,right]{$f_3$};
\draw (e)--(f) node[idx,midway,above left]{$f_4$};
\draw (e)--(g) node[idx,midway,below left]{$f_5$};
\draw (c)--(e) node[idx,midway,below]{$f_6$};
\draw[lbl] (a)--($(a)!0.5!(b)$);
\draw[lbl] (b)--($(b)!0.5!(c)$);
\draw[lbl] (c)--($(c)!0.4!(e)$);
\draw[lbl] (d)--($(d)!0.5!(c)$);
\draw[lbl] (e)--($(e)!0.4!(c)$);
\draw[lbl] (f)--($(f)!0.5!(e)$);
\draw[lbl] (g)--($(g)!0.5!(e)$);
\foreach \v in {a,b,c,d,e,f,g} \node[vtx] at (\v) {};
\labnames
\node at (2.6,-1.9) {\small (a) $J=D$};
\end{scope}
\begin{scope}[xshift=7.2cm]
\labtree
\draw[line width=14pt,line cap=round,gray!25] (a)--(b);
\draw[line width=14pt,line cap=round,gray!25] (c)--(d);
\draw[line width=14pt,line cap=round,line join=round,gray!25] (f)--(e)--(g);
\draw[line width=2.2pt,blue!70!black] (a)--(b);
\draw[line width=2.2pt,blue!70!black] (c)--(d);
\draw[line width=2.2pt,blue!70!black] (e)--(f);
\draw[line width=2.2pt,blue!70!black] (e)--(g);
\draw[line width=2.2pt,orange!90!black,densely dashed] (b)--(c);
\draw[line width=2.2pt,orange!90!black,densely dashed] (c)--(e);
\foreach \v in {a,b,c,d,e,f,g} \node[vtx] at (\v) {};
\labnames
\node[font=\scriptsize] at (0.65,-0.55) {$\{v_1,v_2\}$};
\node[font=\scriptsize] at (3.25,-1.0) {$Z^\circ(F)$};
\node[font=\scriptsize] at (4.55,-1.05) {$Z^*(F)$};
\node at (2.6,-1.9) {\small (b) $F=\{f_2,f_6\}$};
\end{scope}
\end{tikzpicture}
\caption{The labelling of Section~\ref{sec:labels} for the leaf-elimination order $f_1=v_1v_2$, $f_2=v_2v_3$, $f_3=v_3v_4$, $f_4=v_5v_6$, $f_5=v_5v_7$, $f_6=v_3v_5$, with $\ell_6=v_3$ and $p_6=v_5$. (a) For $J=D$, every part is a single vertex, and the arrow at a vertex indicates its label, the first edge towards $\max D=f_6$. (b) For $F=\{f_2,f_6\}$ the $F$-parts are $\{v_1,v_2\}$, $Z^\circ(F)=\{v_3,v_4\}$, and $Z^*(F)=\{v_5,v_6,v_7\}$; internal edges are drawn solid and the designated boundary edges $b_F(\{v_1,v_2\})=f_2$ and $b_F(Z^\circ(F))=b_F(Z^*(F))=f_6$ dashed. The slot sets are $\{f_1,f_2\}$, $\{f_3,f_6\}$, and $\{f_4,f_5,f_6\}$, and each contains the labels in~(a) of the vertices of its part, as asserted by Lemma~\ref{lem:compatibility}.}
\label{fig:labels}
\end{figure}

\begin{remark}
\label{rem:labels}
(a) Since $b_F(Z)\in F$ and $\inn Z\cap F=\varnothing$, Lemma~\ref{lem:compatibility} says that a label outside $F$ is an internal edge of $Z$, and a label in $F$ equals $b_F(Z)$. In the proof below, the labels outside $F$ correspond to weights that vanish in the limit.

(b) \emph{The leaf-elimination order is needed.} Let $Q$ be the path $v_1v_2v_3v_4$, and use the same definitions with the order $v_2v_3<v_1v_2<v_3v_4$, which is not a leaf-elimination order. Let $F=\{v_1v_2,v_2v_3\}$, so that $Z=\{v_2\}$ is an $F$-part, and $\{v_2\}$ is also a $D$-part. The label rule gives $\Lambda(\{v_2\},F)=v_1v_2$, because $\max F=v_1v_2$ is incident to $v_2$, and $\Lambda(\{v_2\},D)=v_2v_3$, because $\max D=v_3v_4$ and the path from $v_2$ to $v_3$ starts with $v_2v_3$. Two distinct labels thus fall into the part $Z=\{v_2\}$ of size one, so no choice of a single designated boundary edge can make Lemma~\ref{lem:compatibility} hold.

(c) \emph{Why the label depends only on the part and the support.} Suppose that each orientation $c$ could label its part $P_c(w)$ in the proof below by an edge of $\bd{P_c(w)}$ of its own choice. Let $Q$ be a star with centre $z$ and $s\ge3$ vertices, and let all $s-1$ orientations direct every edge towards $z$. For weights of full support, $z$ is then the unique minimiser for every orientation, and the $s-1$ orientations could carry the $s-1$ distinct edges at $z$ as labels. The limit argument below would then place $s-1$ orientations in the part $\{z\}$ of size one. The rule $\Lambda$ excludes this, because the label of a part depends only on the part and the support; all these orientations receive the label $\Lambda(\{z\},D)=\max D$. By~(b), independence of the orientation is not enough on its own, and the leaf-elimination order is needed as well.
\end{remark}

\subsection{The colourful KKM theorem and the limit argument}
For a finite set $I$, let
\[
 \Delta_I=\Bigl\{w\in\R^I_{\ge0}:\sum_{x\in I}w_x=1\Bigr\},
\]
let $\supp(w)=\{x\in I:w_x>0\}$ for $w\in\Delta_I$, and for nonempty $A\subseteq I$ let
\[
 \Delta_I(A)=\{w\in\Delta_I:\supp(w)\subseteq A\}
\]
be the face spanned by the unit vectors of $A$. A family $(C_x)_{x\in I}$ of closed subsets of $\Delta_I$ is a \emph{KKM cover} if $\Delta_I(A)\subseteq\bigcup_{x\in A}C_x$ for every nonempty $A\subseteq I$.

\begin{theorem}[Colourful KKM theorem, Gale~\cite{Gale1984}]
\label{thm:gale}
Let $I$ be a finite set with $|I|=L\ge1$, and for every $c\in[L]$ let $(C^c_x)_{x\in I}$ be a KKM cover of $\Delta_I$. Then there exist $w^*\in\Delta_I$ and a bijection $x\mapsto c_x$ from $I$ onto $[L]$ such that $w^*\in C^{c_x}_x$ for every $x\in I$.
\end{theorem}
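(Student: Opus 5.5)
We plan to reduce the statement to a doubly stochastic matrix obtained from an approximate fixed-point argument. Then we extract a permutation by Birkhoff's theorem and pass to the limit using the closedness of the sets $C^c_x$.

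\emph{Step 1: continuous face-preserving maps.} Fix $\varepsilon>0$. For $c\in[L]$, $x\in I$ and $w\in\Delta_I$, put
\[
 g^c_x(w)=w_x\cdot\max\bigl(0,\varepsilon-\operatorname{dist}(w,C^c_x)\bigr).
\]
These functions are continuous and nonnegative. We claim $\sum_x g^c_x(w)>0$ for every $w$. By the KKM property applied to $A=\supp(w)$, we have $w\in\Delta_I(A)\subseteq\bigcup_{x\in A}C^c_x$. So some $x$ with $w_x>0$ has $w\in C^c_x$, and for it $g^c_x(w)=w_x\varepsilon>0$.

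Normalising gives $\psi^c_x=g^c_x/\sum_y g^c_y$. We then set $f^c(w)=\sum_x\psi^c_x(w)\,e_x$ and $F=\frac1L\sum_{c}f^c$. Since $\psi^c_x(w)>0$ only when $x\in\supp(w)$, each $f^c$, and hence $F$, maps every face $\Delta_I(A)$ into itself.

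\emph{Step 2: surjectivity of $F$.} A continuous self-map of the simplex that preserves every face is surjective. Here is the standard argument, by induction on dimension or by degree theory. The straight-line homotopy $(1-\tau)\,\mathrm{id}+\tau F$ also preserves every face, so it never maps a boundary point into the interior. Hence $F|_{\partial\Delta_I}$ has the same degree onto $\partial\Delta_I$ as the identity, namely $1$. If $F$ missed some interior point, $F$ would factor through a retraction of $\Delta_I$ minus that point onto $\partial\Delta_I$, which is impossible. Alternatively, one can invoke Brouwer via the classical KKM theorem.

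So there is $w_\varepsilon$ with $F(w_\varepsilon)=\frac1L\sum_x e_x$, the barycentre.

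\emph{Step 3: extracting a bijection.} Consider the $L\times L$ matrix $M_{c,x}=\psi^c_x(w_\varepsilon)$. Each row sums to $1$ by normalisation. The equation $F(w_\varepsilon)=$ barycentre says exactly that each column sums to $1$. So $M$ is doubly stochastic, and by the Birkhoff--von Neumann theorem (equivalently, Hall's theorem) there is a bijection $x\mapsto c_x$ with $M_{c_x,x}>0$. By construction, $M_{c_x,x}>0$ forces $\operatorname{dist}(w_\varepsilon,C^{c_x}_x)<\varepsilon$.

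\emph{Step 4: limit.} Let $\varepsilon=1/m\to0$. There are finitely many bijections, so one bijection occurs for infinitely many $m$. By compactness of $\Delta_I$, along that subsequence $w_{1/m}\to w^*$. Since the sets $C^{c_x}_x$ are closed, $w^*\in C^{c_x}_x$ for every $x$.

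The main obstacle is Step 2, the surjectivity of a face-preserving map, which is the only genuinely topological input. We would either cite it as a standard consequence of Brouwer's theorem or the classical KKM lemma, or prove it by the degree argument above. The role of the factor $w_x$ in Step 1 is precisely to make the face-preservation hold.
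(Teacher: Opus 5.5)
Your proof is correct. The paper gives no proof of this theorem to compare against: it quotes the result from Gale. In the remark after the proof of Theorem~\ref{thm:weights}, it notes only that, for its application, the theorem could be replaced by Bapat's permutation-based Sperner lemma together with a compactness argument.

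Your argument is the standard proof of the colourful KKM theorem. Each cover yields a face-preserving partition of unity subordinate to the $\varepsilon$-neighbourhoods of the sets $C^c_x$. You average these to $F$, pull back the barycentre, apply Birkhoff--von Neumann to the doubly stochastic matrix $(\psi^c_x(w_\varepsilon))$, and let $\varepsilon\to0$ using closedness. The details check out:
\begin{itemize}
\item The KKM condition with $A=\{x\}$ gives $e_x\in C^c_x$, so every distance is finite.
\item The factor $w_x$ gives $\supp F(w)\subseteq\supp(w)$.
\item The rows of the matrix sum to $1$ by normalisation, and the columns sum to $1$ because $F(w_\varepsilon)_x=1/L$.
\end{itemize}

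The only informal point is Step~2, and it has a one-line proof from the classical KKM theorem that avoids degree theory. The closed sets $K_x=\{w\in\Delta_I:F(w)_x\ge1/L\}$ form a KKM cover. Indeed, for $w\in\Delta_I(A)$ we have $\sum_{x\in A}F(w)_x=1\ge|A|/L$, so some $x\in A$ satisfies $F(w)_x\ge1/L$. A common point $w$ of all the $K_x$ then has all $L$ coordinates of $F(w)$ at least $1/L$, so $F(w)$ is the barycentre.

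Compared with the Bapat route mentioned in the paper, yours reduces the colourful statement to the monochromatic KKM (or Brouwer) theorem plus a matching argument. Bapat's lemma instead works directly with $L$ Sperner labellings of a triangulation and uses only combinatorics and compactness. Both routes finish with the same subsequence-and-closure limit as your Step~4.
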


For $L$ identical covers, this is the classical KKM theorem~\cite{KKM1929}. Bapat's permutation-based generalisation of Sperner's lemma~\cite{Bapat1989} is its combinatorial counterpart; see~\cite{AsadaEtAl2018} for further discussion and generalisations.

\begin{proof}[Proof of Theorem~\ref{thm:weights}]
Put $L=s-1=|D|$. Fix a leaf-elimination order, that is, a choice of $\ell_1,\ldots,\ell_{s-1}$ as in Section~\ref{sec:labels}, and an arbitrary linear order of $W$. For $c\in[L]$ and $w\in\Delta_D$, let $y_c(w)$ be the least element of $M_w(\omega_c)$, let $P_c(w)=Z_{\supp(w)}(y_c(w))$ be the $\supp(w)$-part containing it, and define
\[
 \lambda_c(w)=\Lambda\bigl(P_c(w),\supp(w)\bigr)\in\supp(w),
\]
where the membership holds by~\eqref{eq:label-support}. By Lemma~\ref{lem:min}(ii),
\begin{equation}
 P_c(w)\subseteq M_w(\omega_c).
 \label{eq:Pc-min}
\end{equation}
For $x\in D$, let $C^c_x$ be the closure of $\lambda_c^{-1}(x)$ in $\Delta_D$. For every $c$, the family $(C^c_x)_{x\in D}$ is a KKM cover: if $w\in\Delta_D(A)$, then $x=\lambda_c(w)$ lies in $\supp(w)\subseteq A$, and $w\in C^c_x$. Theorem~\ref{thm:gale} gives $w^*\in\Delta_D$ and a bijection $x\mapsto c_x$ from $D$ onto $[L]$ with $w^*\in C^{c_x}_x$ for all $x$. Let \mbox{$F=\supp(w^*)$}; then $F\ne\varnothing$.

Fix $x\in D$. Since $w^*$ lies in the closure of $\lambda_{c_x}^{-1}(x)$, there are $v^{(m)}\in\Delta_D$, $m\ge1$, with $\lambda_{c_x}(v^{(m)})=x$ and $v^{(m)}\to w^*$. The pair $\bigl(\supp(v^{(m)}),P_{c_x}(v^{(m)})\bigr)$ takes only finitely many values, so after passing to a subsequence, it is constant, equal to $(J_x,P_x)$, say. Then $x=\Lambda(P_x,J_x)$, and
\begin{equation}
 F\subseteq J_x,
 \label{eq:support-inclusion}
\end{equation}
because a positive coordinate of $w^*$ is positive in $v^{(m)}$ for all large $m$. Choose $y_x\in P_x$. By~\eqref{eq:Pc-min}, $y_x\in M_{v^{(m)}}(\omega_{c_x})$ for all $m$ in the subsequence, and Lemma~\ref{lem:min}(iii) yields $y_x\in M_{w^*}(\omega_{c_x})$. Let $Z_x=Z_F(y_x)$. By~\eqref{eq:support-inclusion} and Lemma~\ref{lem:tree}(iv), the $J_x$-part $P_x$ lies in an $F$-part, which must be $Z_x$. Lemma~\ref{lem:min}(ii), applied at $w^*$, gives
\begin{equation}
 Z_x\subseteq M_{w^*}(\omega_{c_x}),
 \label{eq:limit-parts}
\end{equation}
and Lemma~\ref{lem:compatibility}, applied to $F\subseteq J_x$ and $P_x\subseteq Z_x$, gives
\begin{equation}
 x\in S_F(Z_x).
 \label{eq:limit-labels}
\end{equation}

We now assign the orientations to the vertices. For $Z\in\CC F$, let $A_Z=\{c_x:x\in D,\ Z_x=Z\}$. Since $x\mapsto c_x$ is a bijection, every $c\in[L]$ lies in exactly one set $A_Z$, and by~\eqref{eq:limit-labels} and~\eqref{eq:capacity},
\[
 |A_Z|=|\{x\in D:Z_x=Z\}|\le|S_F(Z)|=|Z| .
\]
Choose an injection $\beta_Z\colon A_Z\to Z$ for every $Z\in\CC F$. Since the $F$-parts are pairwise disjoint, these injections combine into an injection $\beta\colon[L]\to W$. If $c=c_x$, then $\beta(c)\in Z_x\subseteq M_{w^*}(\omega_c)$ by~\eqref{eq:limit-parts}.
\end{proof}

\begin{remark}
(a) The argument uses no continuity of the minimisers $y_c(w)$, the parts $P_c(w)$, or the labels $\lambda_c(w)$. Of the dependence on $w$, it uses only the closedness in Lemma~\ref{lem:min}(iii) and the fact that supports cannot shrink near $w^*$, which gives~\eqref{eq:support-inclusion}, besides the combinatorial Lemmas~\ref{lem:min}(ii) and~\ref{lem:compatibility}.

(b) Theorem~\ref{thm:gale} can be replaced by Bapat's lemma~\cite{Bapat1989}. By~\eqref{eq:label-support}, the restrictions of $\lambda_1,\ldots,\lambda_L$ to the vertices of a triangulation of $\Delta_D$ are Sperner labellings, that is, every vertex $v$ receives a label in $\supp(v)$. For triangulations $\mathcal T_m$ of $\Delta_D$ whose mesh tends to zero, Bapat's lemma gives an $(L-1)$-dimensional simplex of $\mathcal T_m$ with vertices $v^{(m)}_x$, $x\in D$, and a bijection $x\mapsto c^{(m)}_x$ from $D$ onto $[L]$ such that $\lambda_{c^{(m)}_x}(v^{(m)}_x)=x$. After passing to a subsequence, the bijection does not depend on $m$, and by compactness all vertices $v^{(m)}_x$ converge to a common point $w^*\in\Delta_D$. From here on, the argument above applies with $v^{(m)}=v^{(m)}_x$. In this form, the proof uses only Sperner-type combinatorics and compactness.
\end{remark}

\section{Sharpness and limitations}
\label{sec:sharpness}

\subsection{One additional edge}

\begin{proposition}
\label{prop:sharp}
For every $k\ge1$ there is a path $T$ with singly coloured edges and colours $0,1,\ldots,k-1$ such that
\[
 |E_0|=k,\qquad |E_c|=k-1\quad(1\le c\le k-1),
\]
and no choice of one component of $T-E_c$ for every colour $c$ covers $V(T)$. Equivalently, there are connected partitions $\mathcal P_0,\ldots,\mathcal P_{k-1}$ of the vertex set of a path with $|\mathcal P_0|=k+1$ and $|\mathcal P_c|=k$ for $c\ge1$ such that no choice of one part from each partition covers all vertices.
\end{proposition}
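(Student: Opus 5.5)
The plan is to work in the edge-colour form on a path: choose an explicit word of edge colours and show, by a greedy argument, that no choice of components covers the path. Write the path as $v_0v_1\cdots v_N$ and read the colours of its edges from left to right as a word. The components of $T-E_c$ are then the maximal intervals containing no edge of colour $c$. The equivalent formulation in terms of partitions follows at once from Proposition~\ref{prop:equivalence}'s translation $\mathcal P_c=\Comp(T-E_c)$, since $|\mathcal P_c|=|E_c|+1$. For $k=1$ I take a single edge of colour $0$; this case is covered by the general construction as well.

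The word I would use is
\[
 w_k \;=\; 0\,\sigma\,0\,\sigma\,0\cdots0\,\sigma\,0,\qquad \sigma=1\,2\cdots(k-1),
\]
which has $k-1$ copies of the block $\sigma$ separated and flanked by $k$ zeros. Every colour $c\ge1$ then occurs $k-1$ times and colour $0$ occurs $k$ times, as required.

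\emph{Step 1: reduce covering to a subsequence condition.} Suppose intervals $K_c$, one component of $T-E_c$ for each colour, cover the path. Discarding redundant intervals and ordering the rest by left endpoint gives a chain $K_{c_1},\dots,K_{c_m}$ with distinct colours. Here $K_{c_1}$ contains $v_0$, and each $K_{c_{i+1}}$ contains the vertex just after the right end of $K_{c_i}$. A component of $T-E_{c_i}$ that does not reach $v_N$ ends just before an edge of colour $c_i$, and the next interval must start after that edge. It follows that if the cover fails to reach $v_N$ for every ordering of all $k$ colours, then $w_k$ contains every permutation of $\{0,\dots,k-1\}$ as a subsequence. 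The argument is an exchange argument: replacing each interval by the one that the greedy rule forces can only move right endpoints further to the right. Conversely, a chain covering the path whose colours (padded to a full ordering) do not appear as a subsequence would give a cover. So it suffices to show that $w_k$ contains every permutation as a subsequence. This is the direction of Proposition~\ref{prop:paths} announced in the introduction, and I would prove it directly here.

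\emph{Step 2: embed every permutation.} Let $\pi$ be a permutation, and write $\pi=\alpha\,0\,\beta$, where $\alpha,\beta$ are words in the colours $1,\dots,k-1$ with $|\alpha|+|\beta|=k-1$. Cut $\alpha$ into maximal increasing runs. Each run embeds into one copy of $\sigma$, so $\alpha$ embeds into $a\le|\alpha|$ consecutive blocks, and likewise $\beta$ into $b\le|\beta|$ blocks. Since $a+b\le k-1$, I embed $\alpha$ into the first $a$ blocks and use the zero immediately following block $a$ for the letter $0$. This zero exists even when $a=0$, because of the leading zero. I then embed $\beta$ into the next $b$ blocks.

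The main obstacle is making Step 1 rigorous, namely the claim that an arbitrary cover can be normalised to the greedy chain. I would handle it by induction along the chain. At each stage the greedy interval of colour $c_i$ starting at the current leftmost uncovered vertex reaches at least as far right as $K_{c_i}$, because both intervals avoid colour-$c_i$ edges and contain that vertex. Hence the greedy intervals also cover the path, and their stopping edges spell $c_1c_2\cdots$ as a subsequence of $w_k$. If the greedy cover succeeds, then some colour's interval reaches $v_N$ before all $k$ colours' edges have been matched. This contradicts the fact that every ordering (completed by the unused colours) is a subsequence, since that subsequence forces a stopping edge for each colour before the end of the path.
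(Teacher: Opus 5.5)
Your argument is correct in substance. Your word $0\,\sigma\,0\,\sigma\cdots\sigma\,0$ is exactly the paper's word $(0,1,\ldots,k-1)^{k-1}0$; what differs is how you show that no choice of components covers the path.

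\textbf{Comparison with the paper.} The paper uses a counting argument. Every component of every $T-E_c$ contains at most $k-1$ edges, because consecutive occurrences of a colour are $k-1$ edges apart and the end runs are short. In any cover, an edge $uv$ of colour $c$ has an endpoint outside $K_c$, so both its endpoints lie in some $K_d$ with $d\ne c$. Hence $k(k-1)+1=|E(T)|\le\sum_c|E(T[K_c])|\le k(k-1)$, a contradiction.

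You instead pass through the subsequence characterisation, essentially (ii)$\Rightarrow$(i)$\Rightarrow$(iii) of Proposition~\ref{prop:paths}, which the paper proves only afterwards. You then check that the word contains every permutation, using the decomposition into increasing runs and $a+b\le|\alpha|+|\beta|=k-1$.

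What each route buys: the paper's route is shorter and needs no normalisation of covers. It also shows that the one extra edge of colour $0$ is exactly what exceeds the capacity $k(k-1)$. Your route gives the stronger structural fact that the word contains all $k!$ orderings. This links the example to the universal-word results behind Proposition~\ref{prop:average}, at the cost of reproving the greedy reduction.

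\textbf{Problems in your write-up of Step 1.} Your final paragraph repairs these, but they should be fixed.
\begin{enumerate}
\item The sentence ``if the cover fails to reach $v_N$ for every ordering \ldots\ then $w_k$ contains every permutation'' states the implication in the direction you do not need. What you need is that a cover yields an ordering that is not a subsequence.
\item The claim that the next interval ``must start after that edge'' is false for the original components. $K_{c_{i+1}}$ may contain the edge of colour $c_i$ at which $K_{c_i}$ ends, since that edge does not have colour $c_{i+1}$.
\end{enumerate}

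The claim in (ii) becomes true only after trimming. Let $v_{r_i}$ be the right end of $K_{c_i}$ and put $r_0=-1$. Trim $K_{c_i}$ to $\{v_{r_{i-1}+1},\ldots,v_{r_i}\}$. In a minimal chain ordered by left endpoints, $K_{c_i}$ contains $v_{r_{i-1}+1}$, so these pieces lie in the $K_{c_i}$. The edge $v_{r_i}v_{r_i+1}$ has colour $c_i$ for $i<m$, and $r_m=N$. The induction $p_j\ge r_j+1$ from the proof of Proposition~\ref{prop:paths} then shows that $(c_1,\ldots,c_m,\ldots)$ is not a subsequence of the word.
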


\begin{proof}
For $k=1$, take a single edge of colour $0$. Let $k\ge2$, and let $T=v_0v_1\cdots v_N$ be the path with $N=k(k-1)+1$ edges in which the colours of the edges $v_0v_1,v_1v_2,\ldots$ form the word
\begin{equation}
 (0,1,\ldots,k-1)^{k-1}\,0,
 \label{eq:sharp-word}
\end{equation}
the concatenation of $k-1$ copies of $0,1,\ldots,k-1$ followed by a single $0$; see Figure~\ref{fig:sharp} for $k=3$. Numbering the edges $1,\ldots,N$ from the left, colour $0$ occurs at the positions $jk+1$ for $0\le j\le k-1$, and colour $c\ge1$ at the positions $jk+c+1$ for $0\le j\le k-2$. The components of $T-E_c$ are the maximal runs of edges avoiding colour $c$. Exactly $k-1$ edges separate consecutive occurrences of a colour. The outer components contain $0$ edges for colour $0$, and $c$ and $k-c$ edges for colour $c\ge1$. Hence, every component of every $T-E_c$ contains at most $k-1$ edges.

Suppose that components $K_c$ of $T-E_c$ cover $V(T)$. Then every edge $uv$ is in some $T[K_d]$. Indeed, let $c$ be the colour of $uv$. Since $uv$ is a bridge of $T$ and belongs to $E_c$, the vertices $u$ and $v$ lie in different components of $T-E_c$, so some endpoint of $uv$ lies outside $K_c$ and hence in some $K_d$ with $d\ne c$. As $uv$ is an edge of $T-E_d$, both endpoints lie in $K_d$. Therefore
\[
 k(k-1)+1=|E(T)|\le\sum_{c=0}^{k-1}|E(T[K_c])|\le k(k-1),
\]
a contradiction. The equivalent formulation follows from Lemma~\ref{lem:tree}(iv), applied to the partitions \mbox{$\mathcal P_c=\Comp(T-E_c)$}.
\end{proof}

\begin{figure}[t]
\centering
\begin{tikzpicture}[scale=1.0,vtx/.style={circle,fill=black,inner sep=1.2pt}]
\foreach \i in {0,...,7} {\coordinate (v\i) at (1.35*\i,0);}
\foreach \i/\c [evaluate=\i as \j using int(\i+1)] in {0/0,1/1,2/2,3/0,4/1,5/2,6/0}
  {\draw (v\i)--(v\j) node[midway,above,font=\scriptsize]{$\c$};}
\foreach \i in {0,...,7} {\node[vtx] at (v\i) {}; \node[below,font=\scriptsize] at (v\i) {$v_{\i}$};}
\begin{scope}[yshift=-0.75cm]
\node[anchor=east,font=\scriptsize] at (-0.3,0) {$\mathcal P_0$};
\foreach \a/\b in {0/0,1/3,4/6,7/7} {\draw[line width=2.4pt,line cap=round,red!70!black] ($(1.35*\a,0)+(-0.08,0)$)--($(1.35*\b,0)+(0.08,0)$);}
\end{scope}
\begin{scope}[yshift=-1.2cm]
\node[anchor=east,font=\scriptsize] at (-0.3,0) {$\mathcal P_1$};
\foreach \a/\b in {0/1,2/4,5/7} {\draw[line width=2.4pt,line cap=round,blue!70!black] ($(1.35*\a,0)+(-0.08,0)$)--($(1.35*\b,0)+(0.08,0)$);}
\end{scope}
\begin{scope}[yshift=-1.65cm]
\node[anchor=east,font=\scriptsize] at (-0.3,0) {$\mathcal P_2$};
\foreach \a/\b in {0/2,3/5,6/7} {\draw[line width=2.4pt,line cap=round,green!55!black] ($(1.35*\a,0)+(-0.08,0)$)--($(1.35*\b,0)+(0.08,0)$);}
\end{scope}
\end{tikzpicture}
\caption{The path of Proposition~\ref{prop:sharp} for $k=3$, with edge colours $0120120$, and the partitions $\mathcal P_c=\Comp(T-E_c)$ drawn as intervals. Every part contains at most two of the seven edges, so three chosen parts contain at most six edges. A covering would have to contain every edge inside some chosen part (see the proof), so no three parts cover the path.}
\label{fig:sharp}
\end{figure}

\subsection{Paths and permutation words}
Let $T=v_0v_1\cdots v_N$ be a path with singly coloured edges, the edge $v_{p-1}v_p$ having colour $\sigma_p\in C$, where $|C|=k$. Thus $E_c=\{v_{p-1}v_p:\sigma_p=c\}$. We call $\sigma=\sigma_1\cdots\sigma_N$ the \emph{colour word} of $T$. An ordering $(c_1,\ldots,c_k)$ of $C$ is a \emph{subsequence} of $\sigma$ if there are positions $p_1<\cdots<p_k$ with $\sigma_{p_j}=c_j$ for all $j$.

\begin{proposition}
\label{prop:paths}
For a path $T$ with singly coloured edges and colour word $\sigma$, the following are equivalent.
\begin{enumerate}
\item The instance $(T,(E_c)_{c\in C})$ has a solution.
\item One can choose a component $K_c$ of $T-E_c$ for every $c\in C$ so that $\bigcup_cK_c=V(T)$.
\item Some ordering of $C$ is not a subsequence of $\sigma$.
\end{enumerate}
\end{proposition}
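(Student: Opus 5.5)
I will prove the cycle (i)$\Rightarrow$(ii)$\Rightarrow$(iii)$\Rightarrow$(i).

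\emph{(i)$\Rightarrow$(ii).} This is the covering part of Proposition~\ref{prop:equivalence}. From a solution, let $K_c$ be the component of $T-E_c$ containing $B_c$ when $B_c\ne\varnothing$, and an arbitrary component otherwise.

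\emph{(ii)$\Rightarrow$(iii).} Suppose every ordering of $C$ is a subsequence of $\sigma$, and let components $K_c$ cover $V(T)$. Each $K_c$ is an interval $v_{a_c}\cdots v_{b_c}$ of the path. Order the colours by their left endpoints $a_c$, breaking ties arbitrarily, and call the result $(c_1,\ldots,c_k)$; empty edge sets cause no trouble, since the component is then the whole path. I claim that covering forces the following: every position $p$ of $\sigma$ lies inside some interval $K_d$ with $d\ne\sigma_p$. The argument is the one used in the proof of Proposition~\ref{prop:sharp}. The edge $v_{p-1}v_p$ is not contained in $K_{\sigma_p}$. Some endpoint of it is therefore covered by some $K_d$ with $d\ne\sigma_p$, and since $v_{p-1}v_p$ is an edge of $T-E_d$, the whole edge lies in $K_d$.

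Now take the reversed ordering $(c_k,\ldots,c_1)$, which by assumption is a subsequence of $\sigma$ at positions $p_1<\cdots<p_k$. A cleaner way to reach a contradiction is a greedy sweep. Starting at $v_0$, repeatedly let the covering interval reaching furthest right among the unused colours be the current one. Each time an interval ends at $v_b$ with $b<N$, the next edge $v_bv_{b+1}$ has the colour of that interval. So the colours of the consecutive intervals used in the sweep appear in $\sigma$ in order. The sweep exhausts all $k$ colours before reaching $v_N$ exactly when the ordering of the colours used is a subsequence of $\sigma$.

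To make this precise I will run it in the contrapositive form (iii)$\Leftarrow$(ii), by a greedy on the chosen intervals. Since the $K_c$ cover the path, there is a chain $K_{d_1},K_{d_2},\ldots$ with $v_0\in K_{d_1}$ in which each interval contains the vertex just after the right end of the previous one. The edge leaving $K_{d_j}$ to the right has colour $d_j$. Hence the colours appear in $\sigma$ at strictly increasing positions, in the order of the chain. If all $k$ colours were used with the last interval ending before $v_N$, this would be consistent; so the goal is to show that the completed ordering $(d_1,\ldots)$ is \emph{not} a subsequence. I expect this bookkeeping to be the delicate point.

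\emph{(iii)$\Rightarrow$(i).} Fix an ordering $(c_1,\ldots,c_k)$ that is not a subsequence of $\sigma$, and allocate greedily from the left. Let $B_{c_1}$ be the longest prefix interval $v_0\cdots v_{q_1-1}$ such that the edge $v_{q_1-1}v_{q_1}$ is the first occurrence of $c_1$. Let $B_{c_2}$ run from $v_{q_1}$ up to just before the first occurrence of $c_2$ after position $q_1$, and continue in the same way. None of these pieces contains an edge of its own colour. If the process ran out of the path only after using all $k$ colours, the occurrences found would embed $(c_1,\ldots,c_k)$ as a subsequence. So at some step the required occurrence does not exist, the current piece extends to $v_N$, and the remaining pieces are empty. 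This yields a solution.

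For (ii)$\Rightarrow$(iii), the cleanest route is to avoid the chain argument and go through (i). Given covering components, I will show directly that the greedy procedure of (iii)$\Rightarrow$(i) succeeds for the ordering of the colours by the right endpoints $b_c$ of the $K_c$. One inducts on the claim that after placing $j$ pieces the frontier vertex is at least $b_{c_j}$. Consequently, for this ordering, the $j$-th colour's first occurrence after the previous frontier lies at or beyond the right end of $K_{c_j}$. That ordering then cannot be a subsequence, because the required occurrences would have to exceed $b_{c_k}=N$. Verifying this frontier inequality, which uses that consecutive intervals in the cover overlap or abut, is the main obstacle.
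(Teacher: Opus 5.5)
Your (i)$\Rightarrow$(ii) and (iii)$\Rightarrow$(i) are correct and coincide with the paper's. The gap is (ii)$\Rightarrow$(iii). Your cycle needs this implication, and you never prove it. The left-endpoint ordering, the sweep and the chain are each abandoned, and your final plan rests on an invariant that is false.

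With colours ordered by right endpoints and the greedy of (iii)$\Rightarrow$(i), you want the $j$-th frontier to be at least $b_{c_j}$. But $K_{c_j}$ may begin to the right of the current frontier. Its left boundary edge, which has colour $c_j$, then stops the greedy early. For example, take $\sigma=CACACB$ on $v_0\cdots v_6$. The components $K_A=\{v_2,v_3\}$, $K_B=\{v_0,\ldots,v_5\}$, $K_C=\{v_5,v_6\}$ cover the path, and the right-endpoint order is $(A,B,C)$. The first occurrence of $A$ is the edge $v_1v_2$, so the greedy's first piece is $\{v_0,v_1\}$ and its frontier is $v_2$, with $2<b_A=3$. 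The conclusion that $(A,B,C)$ is not a subsequence is still true here; only your mechanism fails. Also, the fact you do establish for the chain, that its colours occur in $\sigma$ in order, points the wrong way: occurring in order is what a subsequence needs, not an obstruction to it.

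The missing idea is to turn the cover into a partition first. The paper does this by proving (ii)$\Rightarrow$(i) and (i)$\Rightarrow$(iii) instead of (ii)$\Rightarrow$(iii).

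\emph{(ii)$\Rightarrow$(i).} Your sweep is exactly the paper's argument. At the current vertex $v_x$, pick among the unused colours whose interval contains $v_x$ one reaching furthest right, say to $v_y$, and put $B_c=\{v_x,\ldots,v_y\}$. Every earlier chosen interval ended before $v_x$, so any colour covering $v_x$ is unused, and the sweep yields a solution.

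\emph{(i)$\Rightarrow$(iii).} List the nonempty pieces from left to right as $B_{c_j}=\{v_{b_{j-1}+1},\ldots,v_{b_j}\}$, with $b_0=-1$ and $b_q=N$, and extend $(c_1,\ldots,c_q)$ to an ordering of $C$. Suppose $p_1<\cdots<p_k$ embedded this ordering in $\sigma$. Then $p_j\ge b_j+1$ by induction, because $p_j>p_{j-1}\ge b_{j-1}+1$ and none of the positions $b_{j-1}+2,\ldots,b_j$ inside $B_{c_j}$ carries $c_j$. Hence $p_q\ge N+1$, a contradiction. This is the frontier inequality you wanted. It holds here because consecutive pieces abut, so there is no gap in which $c_j$ can occur early.

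Alternatively, your right-endpoint claim can be proved directly by working from the right. Since $b_{c_k}=N$, any occurrence of $c_k$ lies at a position at most $a_{c_k}$. So $c_1,\ldots,c_{k-1}$ would embed in the word of the prefix $v_0\cdots v_{a_{c_k}-1}$. That prefix is covered by the truncations of $K_{c_1},\ldots,K_{c_{k-1}}$; replace an empty truncation by the component containing $v_{a_{c_k}-1}$, which keeps the right endpoints ordered. Then induct on $k$.
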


\begin{proof}
(i)$\Rightarrow$(ii) is Proposition~\ref{prop:equivalence}.

(ii)$\Rightarrow$(i). The sets $K_c$ are intervals of the path. Construct the solution from left to right. Let $x=0$ and let $U=C$ be the set of unused colours. Among the colours $c\in U$ with $v_x\in K_c$, choose one whose interval $K_c$ extends furthest to the right, say to $v_y$; put $B_c=\{v_x,\ldots,v_y\}\subseteq K_c$, remove $c$ from $U$, and continue with $x=y+1$ as long as $x\le N$. Every colour chosen earlier had an interval ending before $v_x$. Since the sets $K_c$ cover $V(T)$, some colour has an interval containing $v_x$, and every such colour is unused, so the procedure succeeds. Unused colours receive empty sets. Each $B_c$ is a subset of $K_c$ and therefore contains no edge of colour $c$.

(i)$\Rightarrow$(iii). Let $(B_c)$ be a solution. Its nonempty sets are intervals $B_{c_1},\ldots,B_{c_q}$, listed from left to right; write $B_{c_j}=\{v_{b_{j-1}+1},\ldots,v_{b_j}\}$ with $b_0=-1$ and $b_q=N$. Extend $(c_1,\ldots,c_q)$ to an ordering $(c_1,\ldots,c_k)$ of $C$, and suppose that $p_1<\cdots<p_k$ are positions with $\sigma_{p_j}=c_j$. The edges inside $B_{c_j}$ are the positions $b_{j-1}+2,\ldots,b_j$, none of which has colour $c_j$. By induction on $j$, we get $p_j\ge b_j+1$: for $j=1$ this holds because $p_1\ge1$ and $p_1$ is not inside $B_{c_1}$, and for $j\ge2$ because $p_j>p_{j-1}\ge b_{j-1}+1$ and $p_j$ is not inside $B_{c_j}$. For $j=q$ this gives $p_q\ge N+1$, which is impossible.

(iii)$\Rightarrow$(i). Let $(c_1,\ldots,c_k)$ be an ordering that is not a subsequence of $\sigma$. Put $t_0=0$, and for $j\ge1$ let $t_j$ be the first position after $t_{j-1}$ carrying colour $c_j$, as long as such a position exists. Since the ordering is not a subsequence, there is a first index $j_0\le k$ for which $t_{j_0}$ does not exist. Put $B_{c_j}=\{v_{t_{j-1}},\ldots,v_{t_j-1}\}$ for $j<j_0$, put $B_{c_{j_0}}=\{v_{t_{j_0-1}},\ldots,v_N\}$, and let the remaining colours receive empty sets. By the choice of the positions $t_j$, no set $B_{c_j}$ contains an edge of colour $c_j$.
\end{proof}

\begin{corollary}[Paths]
\label{cor:paths}
If every letter occurs at most $k-1$ times in a word over a $k$-letter alphabet, then some ordering of the alphabet is not a subsequence of the word. Consequently, Theorem~\ref{thm:Aprime} holds for paths.
\end{corollary}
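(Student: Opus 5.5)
We need to prove Corollary~\ref{cor:paths}: a word in which each of $k$ letters occurs at most $k-1$ times misses some ordering of the alphabet as a subsequence, and hence Theorem~\ref{thm:Aprime} holds for paths.

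The approach is a greedy adversarial construction of an ordering, in the spirit of Kleitman and Kwiatkowski. Let $\sigma=\sigma_1\cdots\sigma_N$ be the word. We build the ordering $(c_1,\ldots,c_k)$ one letter at a time while tracking a pointer $t_j$, defined as the position of the first occurrence of $c_j$ after $t_{j-1}$, with $t_0=0$. The ordering is a subsequence of $\sigma$ exactly when all the $t_j$ exist, because greedy leftmost embedding is optimal for subsequence testing. So it suffices to choose each $c_j$ among the unused letters so as to push $t_j$ as far right as possible. Concretely, $c_j$ is the unused letter whose first occurrence after $t_{j-1}$ is latest, or one that does not occur after $t_{j-1}$ at all, in which case we stop and have already succeeded.

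The key counting step is as follows. Suppose all $t_1,\ldots,t_k$ exist. At step $j$ there are $k-j+1$ unused letters, and each of them occurs in the window $(t_{j-1},t_j]$, since $c_j$'s first occurrence after $t_{j-1}$ is the latest among them. In particular, every letter $c_i$ with $i\ge j$ occurs at least once in each window $(t_{j-1},t_j]$ for $j\le i$. These windows are disjoint, so $c_k$ occurs at least $k$ times in total, once in each of the $k$ windows. This contradicts the bound of $k-1$ occurrences. Hence the greedy ordering is not a subsequence.

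The final step transfers this to paths. Given an admissible instance on a path, Lemma~\ref{lem:normalization} reduces to singly coloured edges. This is legitimate for paths: contraction and subdivision of a path yield a path, and we may skip saturation, or note that pendant edges would break the path, so we simply avoid that step. The colour word then has each letter at most $k-1$ times. By the counting above, some ordering is not a subsequence, and the implication (iii)$\Rightarrow$(i) of Proposition~\ref{prop:paths} gives a solution, including the covering statement via Proposition~\ref{prop:equivalence}.

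The only delicate points are checking that the normalisation preserves being a path, and verifying precisely that every unused letter lies in each greedy window. Both are routine.
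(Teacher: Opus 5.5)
Your proposal is correct and follows essentially the same route as the paper. It uses the same greedy choice of the unused letter whose next occurrence is latest, the same leftmost-embedding observation, and the same count placing $c_k$ once in each of $k$ disjoint windows. The transfer to paths is also the same: Lemma~\ref{lem:normalization}(i) without saturation, then (iii)$\Rightarrow$(i) of Proposition~\ref{prop:paths}, with the covering statement from Proposition~\ref{prop:equivalence}.
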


\begin{proof}
For $k=1$ the word is empty, and the claim is trivial, so let $k\ge2$. Suppose that all orderings are subsequences; then every letter occurs. Let $c_1$ be the letter whose first occurrence is the latest, and let $p_1$ be this occurrence. Recursively, for $j=2,\ldots,k$, every letter $c\notin\{c_1,\ldots,c_{j-1}\}$ occurs after $p_{j-1}$, because an ordering beginning with $c_1,\ldots,c_{j-1},c$ is a subsequence, and if $q_1<\cdots<q_{j-1}$ are positions of $c_1,\ldots,c_{j-1}$, then $q_i\ge p_i$ for every $i$, by induction on $i$, since $p_1$ is the first occurrence of $c_1$ and $p_i$ is the first occurrence of $c_i$ after $p_{i-1}$. Let $c_j$ be the letter among them whose first occurrence after $p_{j-1}$ is the latest, and let $p_j$ be this occurrence. The letter $c_k$ then occurs before $p_1$, between $p_{j-1}$ and $p_j$ for every $2\le j\le k-1$, and at $p_k$, which gives at least $k$ occurrences. For the second statement, apply Lemma~\ref{lem:normalization}(i) to an admissible instance on a path. Contracting the uncoloured edges, whose components are subpaths, and subdividing the edges with several colours turn a path into a path; we obtain an admissible instance on a path with singly coloured edges and $|\widehat E_c|=|E_c|\le k-1$ for every $c$, a solution of which yields a solution of the original instance. By the first statement, some ordering of $C$ is not a subsequence of its colour word, so it has a solution by Proposition~\ref{prop:paths}. The covering statement then follows from Proposition~\ref{prop:equivalence}.
\end{proof}

Proposition~\ref{prop:paths} also shows that the individual bounds in Theorem~\ref{thm:Aprime} cannot be replaced by a bound on their sum, which would be natural in view of the total count $k(k-1)$ in a saturated instance.

\begin{proposition}
\label{prop:average}
For every $k\ge4$ there is a path $T$ with singly coloured edges and $k$ colours such that $\sum_c|E_c|=k^2-2k+4\le k(k-1)$, with strict inequality for $k\ge5$, and no choice of one component of $T-E_c$ for every colour covers $V(T)$.
\end{proposition}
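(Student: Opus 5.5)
The plan is to reduce Proposition~\ref{prop:average} to the existence of short \emph{universal} words, that is, words over a $k$-letter alphabet that contain every ordering of the alphabet as a subsequence, and then to apply Proposition~\ref{prop:paths}. Suppose $\sigma$ is a word over $C$, $|C|=k$, of length $N=k^2-2k+4$ that contains all $k!$ orderings of $C$ as subsequences. Let $T=v_0v_1\cdots v_N$ be the path whose edge $v_{p-1}v_p$ carries the single colour $\sigma_p$. Then every edge is singly coloured, the sets $E_c$ partition $E(T)$, and so $\sum_c|E_c|=N=k^2-2k+4$. The inequality $k^2-2k+4\le k(k-1)$ is equivalent to $k\ge4$, with strict inequality exactly when $k\ge5$.

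Since every ordering of $C$ is a subsequence of $\sigma$, condition~(iii) of Proposition~\ref{prop:paths} fails. By the equivalence (ii)$\Leftrightarrow$(iii) of that proposition, condition~(ii) also fails: no choice of a component of $T-E_c$ for every $c$ covers $V(T)$. This is exactly the claim. Corollary~\ref{cor:paths} shows that such a word must have some letter occurring at least $k$ times. This is consistent with the construction, because only the total length is bounded.

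The remaining, and only substantive, step is the existence of universal words of length $k^2-2k+4$ for every $k\ge4$. This is what I expect to be the main obstacle, and I would rely on the literature on words containing all permutations~\cite{Newey1973,Adleman1974,KoutasHu1975,Radomirovic2012,EngenVatter2021}.
\begin{enumerate}
\item For $k\ge7$, I would cite Radomirovi\'c's construction~\cite{Radomirovic2012} of a universal word of length $k^2-2k+4$.
\item For $k=4,5,6$, I would use the known minimal lengths $12$, $19$ and $28$. These coincide with $k^2-2k+4$, and the corresponding words are listed in~\cite{Newey1973,KoutasHu1975}; see the survey~\cite{EngenVatter2021}.
\end{enumerate}
For $k=4$, I would also display an explicit word of length $12$, such as $123412314213$, and indicate how to check by hand that all $24$ orderings embed. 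This makes the smallest case self-contained and shows that the bound $k(k-1)$ on the total number of edges is attained without any individual bound being respected.
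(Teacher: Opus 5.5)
Your proposal is correct and is the paper's own argument: a universal word of length $k^2-2k+4$, used as the colour word of a path, makes condition~(iii) of Proposition~\ref{prop:paths} fail, hence (ii) fails; your example $123412314213$ does contain all $24$ orderings. One citation detail: Radomirovi\'c~\cite{Radomirovic2012} is known for words that are \emph{shorter} than $k^2-2k+4$ for large $k$, so instead of splitting into cases, either cite the classical constructions of length exactly $k^2-2k+4$ for all $k\ge3$~\cite{Adleman1974,KoutasHu1975}, as the paper does, or pad a shorter universal word with arbitrary letters, which keeps it universal.
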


\begin{proof}
For every $k\ge3$ there are words of length $k^2-2k+4$ over a $k$-letter alphabet containing every ordering of the alphabet as a subsequence~\cite{Newey1973,Adleman1974,KoutasHu1975}; see~\cite{EngenVatter2021}. Use such a word as the colour word of a path and apply Proposition~\ref{prop:paths}.
\end{proof}

For example, for $k=5$ the shortest words over a five-letter alphabet containing all $120$ orderings as subsequences have length $19=k^2-2k+4<k(k-1)$~\cite{Newey1973}; see~\cite[Section~4]{EngenVatter2021}. By Corollary~\ref{cor:paths}, some letter of such a word occurs at least $5$ times.

\subsection{Cycles}
The four-cycle example of Section~\ref{sec:why} generalises as follows.

\begin{proposition}[Cycles]
\label{prop:cycles}
Let $G$ be a cycle and $k\ge2$.
\begin{enumerate}
\item If $\mathcal P_1,\ldots,\mathcal P_k$ are connected partitions of $V(G)$ with $|\mathcal P_i|\le k-1$, then there is a rainbow partition subordinate to $(\mathcal P_i)$ in $G$, that is, an allocation $(B_i)_{i\in[k]}$ of $V(G)$ in which every nonempty $B_i$ is connected in $G$ and contained in a member of $\mathcal P_i$.
\item If $G$ has at least four vertices, then there are connected partitions $\mathcal P_1,\mathcal P_2$ of $V(G)$ with two parts each such that no part of $\mathcal P_1$ together with a part of $\mathcal P_2$ covers $V(G)$.
\item If $G$ has exactly $k^2$ vertices, then there are connected partitions $\mathcal P_1,\ldots,\mathcal P_k$ of $V(G)$ with $|\mathcal P_i|=k$ that admit no rainbow partition.
\end{enumerate}
\end{proposition}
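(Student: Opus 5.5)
Part~(i) is proved by cutting the cycle open so that Theorem~\ref{thm:A} applies on a path. Fix any edge $e=uv$ of $G$ that joins two distinct parts of $\mathcal P_1$; if $|\mathcal P_1|=1$, take any edge. Deleting $e$ gives a path $T=G-e$. For each $i$, the restriction of $\mathcal P_i$ to $T$ is a connected partition of $V(T)$: a part that is an arc of $G$ avoiding $e$ stays connected, and the one part containing both $u$ and $v$ through $e$ may split into two arcs. Hence every restricted partition has at most $k$ parts.

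Theorem~\ref{thm:A} for the path $T$ (equivalently Corollary~\ref{cor:paths}) then gives an allocation $(B_i)$ of $V(T)=V(G)$ in which each nonempty $B_i$ is connected in $T$, hence in $G$, and lies in a restricted part. A restricted part lies in a part of $\mathcal P_i$, so $(B_i)$ is a rainbow partition in $G$ subordinate to $(\mathcal P_i)$. Note that we never used the special choice of $e$: deleting any edge raises the part count by at most one, and the bound $k-1$ absorbs that. This step is essentially routine.

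Part~(ii) generalises the four-cycle example. Label the vertices $0,\dots,N-1$ cyclically with $N\ge4$. Let $\mathcal P_1$ split the cycle into the two arcs $\{0,\dots,a-1\}$ and $\{a,\dots,N-1\}$. Let $\mathcal P_2$ split it into two arcs whose two cut edges interleave with the cut edges of $\mathcal P_1$, for instance cut between $N-1,0$ and $a,a+1$ for $\mathcal P_1$, and between $0,1$ and $a+1,a+2$ for $\mathcal P_2$. This requires $a\ge2$ and $N-a\ge2$. Since the cut points alternate around the cycle, every part of $\mathcal P_1$ meets both parts of $\mathcal P_2$. Take one part $X\in\mathcal P_1$ and one part $Y\in\mathcal P_2$. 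The complement of $X$ is the other arc of $\mathcal P_1$, and $Y$ misses some vertex of it, because that arc also meets the other part of $\mathcal P_2$. So $X\cup Y\ne V(G)$.

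Part~(iii) is the real construction and the main obstacle. The plan is to mimic Proposition~\ref{prop:sharp} on the cycle with $N=k^2$ edges, coloured cyclically by the word $(1,2,\dots,k)^k$. Colour $i$ then occurs on exactly $k$ edges, and the parts of $\mathcal P_i=\Comp(G-E_i)$ are $k$ arcs, each containing $k-1$ edges. Suppose $(B_i)$ is a rainbow partition. The nonempty pieces are arcs partitioning the cycle. Each arc has at most $k$ vertices, because it lies inside a part of $\mathcal P_i$, and there are at most $k$ arcs. Since there are $k^2$ vertices, all pieces are nonempty, every piece has exactly $k$ vertices, and each piece $B_i$ equals a whole part of $\mathcal P_i$.

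So each boundary between consecutive pieces is an edge, and the edges leaving $B_i$ on both sides have colour $i$. In particular, the edge between consecutive arcs $B_i$ and $B_j$ must have colour $i$ and colour $j$ at once, which forces $i=j$. This is a contradiction, since the pieces have distinct indices and $k\ge2$ gives at least two pieces.

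The one check to make is that a part of $\mathcal P_i$ is bounded on both sides by edges of colour $i$. This holds because the parts are the components obtained by deleting exactly the edges of colour $i$. If the singly-coloured word wrapping around causes issues at the seam, I would verify the periodic word $(12\cdots k)^k$ directly; it is cyclically periodic, so there is no seam problem.
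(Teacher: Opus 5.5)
Your proposal is correct and follows the paper's proof in all three parts. In (i) you delete one edge and apply Theorem~\ref{thm:A} to the resulting path, and in (ii) you use two pairs of interleaving cut edges. In (iii) your colouring $(1,2,\dots,k)^k$ produces exactly the paper's partitions, and your remark that the edge between two consecutive pieces would need two colours is the paper's first-index-modulo-$k$ contradiction rephrased in edge colours.

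The only flaw is that the explicit cut edges in your example for (ii) are off by one. The arcs $\{0,\dots,a-1\}$ and $\{a,\dots,N-1\}$ are cut at $\{N-1,0\}$ and $\{a-1,a\}$, so $\mathcal P_2$ should be cut at $\{0,1\}$ and $\{a,a+1\}$. These interleave precisely when $2\le a\le N-2$, which matches your stated requirement; for example, $a=2$ works.
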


\begin{proof}
(i) Delete an edge $uv$ of $G$ to obtain a path $T$. A member $X$ of $\mathcal P_i$ not containing both $u$ and $v$ satisfies $T[X]=G[X]$. If $X\ne V(G)$ contains both $u$ and $v$, then $G[X]$ is a path containing the edge $uv$, and $X$ splits into two connected sets of $T$; if $X=V(G)$, then $X$ is connected in $T$. Since at most one member of $\mathcal P_i$ contains both $u$ and $v$, we obtain connected partitions of $T$ with at most $k$ parts that refine the partitions $\mathcal P_i$. Theorem~\ref{thm:A} for $T$ gives the claim, because connected sets of $T$ are connected in $G$.

(ii) Let $e_1,f_1,e_2,f_2$ be distinct edges of $G$ in this cyclic order, and let $\mathcal P_1=\Comp(G-\{e_1,e_2\})$ and $\mathcal P_2=\Comp(G-\{f_1,f_2\})$. Each part of $\mathcal P_1$ and each part of $\mathcal P_2$ is the union of two of the four components of $G-\{e_1,f_1,e_2,f_2\}$, and a part of $\mathcal P_1$ and a part of $\mathcal P_2$ always share one of these components. Hence, their union misses one of the four components.

(iii) Let $G=v_0v_1\cdots v_{k^2-1}v_0$, with indices taken modulo $k^2$, and for $i\in[k]$ let $\mathcal P_i$ consist of the $k$ sets $\{v_{i+tk},v_{i+tk+1},\ldots,v_{i+tk+k-1}\}$, $0\le t\le k-1$. These are connected partitions with $|\mathcal P_i|=k$, and all their members have exactly $k$ vertices. If $(B_i)_{i\in[k]}$ were a rainbow partition subordinate to them, then $|B_i|\le k$ and $\sum_i|B_i|=k^2$, so every $B_i$ would be a member of $\mathcal P_i$. Thus $V(G)$ would be partitioned into $k$ sets of $k$ consecutive vertices. Two such sets that follow each other along the cycle have first vertices whose indices differ by $k$, so all first indices would be congruent modulo $k$; this is well defined because $k$ divides $k^2$. But the first index of $B_i$ is congruent to $i$ modulo $k$, and $1\not\equiv2\pmod k$, a contradiction.
\end{proof}

For $k=2$, part~(iii) is the four-cycle of Figure~\ref{fig:small}(a). Thus on cycles the bound $k-1$ in~(i) is best possible for every $k\ge2$, whereas the triangle satisfies the conclusion of Theorem~\ref{thm:A} for every $k$: every nonempty set of vertices of a triangle is connected, so for $k\ge3$ singletons suffice, and for $k=2$ one can take a largest part of $\mathcal P_1$ as $B_1$ and its complement, which has at most one vertex, as $B_2$.

\subsection{Covers versus partitions}
\label{sec:covers}
A covering by components need not contain a solution, even under the hypothesis of Theorem~\ref{thm:Aprime}. Let $T$ be the tree with edges $xa,xy,xc,xd,yz$, let $k=3$, and let $E_1=\{xy,yz\}$, $E_2=\{xa,xy\}$, and $E_3=\{xc,xd\}$. The components $K_1=\{y\}$ of $T-E_1$, $K_2=\{x,c,d\}$ of $T-E_2$, and $K_3=\{x,a,y,z\}$ of $T-E_3$ cover $V(T)$. Suppose that a solution satisfies $B_i\subseteq K_i$ for all $i$. The vertices $c$ and $d$ lie in no $K_i$ other than $K_2$, and $a$ and $z$ lie in no $K_i$ other than $K_3$. Hence $B_2\supseteq\{c,d\}$ and $B_3\supseteq\{a,z\}$, and both connected sets contain $x$, a contradiction. (The solution $B_1=\{x,a,c,d\}$, $B_2=\{y,z\}$, $B_3=\varnothing$ uses other components.) On paths, by contrast, every covering by components contains a solution, by the proof of Proposition~\ref{prop:paths}. Thus, the partition statement of Theorem~\ref{thm:Aprime} cannot be obtained by trimming an arbitrary covering.

\section{Fair division consequences}
\label{sec:chores}

We use the model of Section~\ref{sec:results}: the vertices of a tree $T$ are chores, the $n$ agents have monotone costs $\kappa_i$, and $\MMS_i(T,n)$ is defined by~\eqref{eq:intro-mms} as a minimum over the finite nonempty family $\Pi_n(T)$, which contains $\{V(T)\}$.

\subsection{Conventions}

\begin{lemma}[Exactly $n$ bundles]
\label{lem:exactly}
If $|V(T)|\ge n$, then
\[
 \MMS_i(T,n)=\min_{\mathcal P}\ \max_{P\in\mathcal P}\ \kappa_i(P),
\]
where $\mathcal P$ ranges over the connected partitions of $V(T)$ with exactly $n$ parts. If $|V(T)|<n$, then there is no such partition. For every $T$, the benchmark $\MMS_i(T,n)$ also equals the minimum of $\max_{j\in[n]}\kappa_i(B_j)$ over all connected allocations $(B_j)_{j\in[n]}$ of $V(T)$.
\end{lemma}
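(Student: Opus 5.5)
The plan is to prove the three assertions of Lemma~\ref{lem:exactly} in turn, using only monotonicity and the tree facts of Lemma~\ref{lem:tree}. The middle assertion is immediate: parts of a partition are nonempty and pairwise disjoint, so a partition with exactly $n$ parts needs at least $n$ vertices.

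For the first assertion, suppose $|V(T)|\ge n$. Every partition with exactly $n$ parts lies in $\Pi_n(T)$, so the minimum over exact partitions is at least $\MMS_i(T,n)$. For the converse, I would take $\mathcal P\in\Pi_n(T)$ attaining the minimum and with $|\mathcal P|<n$, and refine it without increasing $\max_{P\in\mathcal P}\kappa_i(P)$. Since $|V(T)|\ge n>|\mathcal P|$, some part $P$ has at least two vertices, so $T[P]$ is a tree with an edge $e$. Deleting $e$ splits $P$ into two connected sets $P',P''\subseteq P$, and monotonicity gives $\kappa_i(P'),\kappa_i(P'')\le\kappa_i(P)$. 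Replacing $P$ by $P'$ and $P''$ increases the number of parts by one and does not increase the maximum cost. Iterating until there are exactly $n$ parts yields an exact partition of cost at most $\MMS_i(T,n)$, which gives equality.

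For the allocation formulation, I would compare the two minima through a pair of translations. Given $\mathcal P\in\Pi_n(T)$, list its members as $B_1,\ldots,B_{|\mathcal P|}$ and set the remaining $B_j=\varnothing$. This is a connected allocation with the same maximum cost, because $\kappa_i(\varnothing)=0\le\kappa_i(X)$ for every $X$. Conversely, the nonempty bundles of a connected allocation form a connected partition with at most $n$ parts. Its maximum cost is the same, since empty bundles contribute $0$ and the maximum over the nonempty ones is nonnegative; when $V(T)\ne\varnothing$ at least one bundle is nonempty. Both minima are over finite families, so they are attained and the two values coincide.

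All three steps are routine. The only point needing care is the refinement step, where one must check that both halves $P',P''$ are connected (by Lemma~\ref{lem:tree}(iv) applied to the tree $T[P]$) and that the cost does not increase, which is exactly monotonicity.
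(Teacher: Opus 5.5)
Your proposal is correct and follows the paper's proof essentially step by step. It uses the same edge-deletion refinement of a part with at least two vertices, with monotonicity guaranteeing the cost does not increase, for the exact-$n$ formulation. For the allocation formulation, it uses the same translation between members of $\Pi_n(T)$ and connected allocations by adding or discarding empty bundles.
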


\begin{proof}
One inequality is immediate. For the other, let $\mathcal P\in\Pi_n(T)$ have $q<n$ parts. Since $|V(T)|\ge n>q$, some part $P$ has at least two vertices, and deleting an edge of the tree $T[P]$ splits $P$ into two connected sets, each of cost at most $\kappa_i(P)$ by monotonicity. After $n-q$ such steps, we reach a partition with exactly $n$ parts and no higher maximum cost. For the last statement, the nonempty members of a connected allocation form a partition in $\Pi_n(T)$ with the same maximum cost, because empty bundles cost $0$ and some bundle is nonempty; conversely, every member of $\Pi_n(T)$ becomes a connected allocation after adding empty bundles.
\end{proof}

\begin{remark}[The convention of Xiao, Qiu, and Huang]
\label{rem:xqh}
In~\cite{XiaoEtAl2023arXiv}, agent $i$ has an additive disutility $u_i\le0$, a \emph{valid $n$-partition} is an assignment of $n$ bundles inducing connected subgraphs, and $\mms_i$ is the maximum over valid $n$-partitions of the minimum utility of a bundle. With $u_i=-\kappa_i$, Lemma~\ref{lem:exactly} gives $\mms_i=-\MMS_i(T,n)$ whenever $|V(T)|\ge n$, whether or not empty bundles are admitted in valid partitions. Hence, when $|V(T)|\ge n$, their MMS allocations are exactly our connected MMS allocations, with all bundles nonempty if their definition requires it, and Theorem~\ref{thm:B}(a) provides such allocations.
\end{remark}

\subsection{Proof of Theorem~\ref{thm:B}}

\begin{proof}[Proof of Theorem~\ref{thm:B}(a)]
For every $i$, choose $\mathcal P_i\in\Pi_n(T)$ attaining $\MMS_i(T,n)$; if such partitions are prescribed, use them. Theorem~\ref{thm:A} with $k=n$ gives a connected allocation $(B_i)$ in which every nonempty $B_i$ lies in some $P_i\in\mathcal P_i$, so that
\[
 \kappa_i(B_i)\le\kappa_i(P_i)\le\MMS_i(T,n)
\]
by monotonicity; for $B_i=\varnothing$ the inequality holds because $\kappa_i(\varnothing)=0$. If $|V(T)|\ge n$, the pieces can be chosen nonempty by Theorem~\ref{thm:A}.
\end{proof}

The converse rests on the following construction, in which the costs are chosen so that the benchmark partition is essentially the only way to satisfy the agent.

\begin{lemma}
\label{lem:costs}
Let $T$ be a tree, let $k\ge1$, let $\mathcal P$ be a connected partition of $V(T)$ with exactly $k$ parts, and let $\eta=1/(4k)$. There is an additive cost $\kappa$, with vertex costs $\kappa(v)\ge0$, such that $\kappa(P)\le1-\eta$ for every $P\in\mathcal P$, and every connected set $S$ with $\kappa(S)\le1-\eta$ is contained in a member of $\mathcal P$.
\end{lemma}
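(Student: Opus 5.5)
The plan is to build $\kappa$ on the quotient tree $Q=T/\mathcal P$, which is a tree on $k$ nodes whose edges correspond to the $k-1$ cut edges of $\mathcal P$ (Lemma~\ref{lem:tree}(iii)). I root $Q$ at an arbitrary part $P_0$ and write $d(P)$ for the depth of a part $P$.

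For every non-root part $P$, the cut edge to its parent part is an edge $p_Pc_P$ with $c_P\in P$; I call $c_P$ the \emph{entry vertex} of $P$ and $p_P$ the \emph{exit vertex} in the parent part. Let $m(P)\le k-1$ be the number of children of $P$. I put $\varepsilon_d=\eta k^{-d}$ and assign vertex costs additively; a vertex may collect several contributions if it plays several roles.
\begin{enumerate}
\item Every exit vertex lying in a part $P$ gets $\varepsilon_{d(P)}$, once for each child edge leaving $P$ through it.
\item Every entry vertex $c_P$ of a non-root part gets $1-\eta-m(P)\,\varepsilon_{d(P)}$.
\end{enumerate}
All other vertices cost $0$.

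\emph{Nonnegativity and part totals.} Since $m(P)\varepsilon_{d(P)}\le(k-1)\eta<1-\eta$, all costs are nonnegative. A non-root part then has total cost exactly $(1-\eta-m(P)\varepsilon_{d(P)})+m(P)\varepsilon_{d(P)}=1-\eta$. The root has total cost $m(P_0)\eta\le(k-1)/(4k)<1-\eta$. So $\kappa(P)\le1-\eta$ for every $P\in\mathcal P$.

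\emph{Connected sets leaving a part.} Let $S$ be connected and not contained in a single part. Then $S$ contains both endpoints of some cut edge $p_Pc_P$, by Lemma~\ref{lem:tree}(i) applied along a path between two parts. Let $d=d(P)$, so the parent part has depth $d-1$. Then $\kappa(S)\ge\kappa(c_P)+\kappa(p_P)$, counting each vertex once even if $c_P$ and $p_P$ carry several contributions, and
\[
\kappa(c_P)+\kappa(p_P)\ge 1-\eta-m(P)\,\varepsilon_{d}+\varepsilon_{d-1}.
\]
The key inequality is $\varepsilon_{d-1}=k\varepsilon_d>(k-1)\varepsilon_d\ge m(P)\varepsilon_d$. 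Hence $\kappa(S)>1-\eta$, and by contraposition every connected $S$ with $\kappa(S)\le1-\eta$ lies in one part.

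\emph{Main obstacle.} The hard step is balancing two demands. A part may contain up to $k-1$ exit vertices, yet its total must stay at most $1-\eta$. At the same time, every cut-edge pair must strictly exceed $1-\eta$. Symmetric weightings fail, for instance when the quotient is a star. The geometric decay of $\varepsilon_d$ with depth is exactly what resolves this, so the check I will do most carefully is that vertices combining several roles only add cost and never break the part-total count.
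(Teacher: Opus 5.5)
Your proof is correct and is essentially the paper's construction: you root the quotient tree, put on the child-side endpoint of each parent edge the amount $1-\eta$ minus the total placed on that part's exits to its children, so that every non-root part costs exactly $1-\eta$ and every cut edge carries more than $1-\eta$. The only difference is the choice of compensating weights. The paper puts $2\eta m_X$ on $u_X$, with $m_X$ the number of parts in the subtree of $X$, which gives the uniform bound $\kappa(u_X)+\kappa(v_X)\ge1+\eta$ with amounts linear in $\eta$. Your depth-geometric weights $\eta k^{-d}$ also work, via $m(P)\le k-1<k$, but they produce costs that are exponentially small in the depth of the quotient tree.
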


\begin{proof}
Root the quotient tree $T/\mathcal P$ at a part $R$. For a part $X\ne R$, let $u_Xv_X$ be the unique edge of $T$ joining $X$ to its parent part, with $v_X\in X$, and let $m_X\in[k-1]$ be the number of parts in the subtree of $T/\mathcal P$ rooted at $X$. Starting from $\kappa\equiv0$, add, for every part $X\ne R$, the amount $1-\eta(2m_X-1)$ to $\kappa(v_X)$ and the amount $2\eta\,m_X$ to $\kappa(u_X)$. Since $2m_X-1\le 2k-3$, all amounts are nonnegative.

A part $X\ne R$ receives the amount $1-\eta(2m_X-1)$ at $v_X$ and the amounts $2\eta\,m_Y$ at $u_Y\in X$ for its children $Y$, and nothing else. Since $\sum_Ym_Y=m_X-1$, we get
\[
 \kappa(X)=1-\eta(2m_X-1)+2\eta(m_X-1)=1-\eta.
\]
 The root receives $\kappa(R)=2\eta(k-1)<1/2\le1-\eta$. Every edge of $T$ joining two distinct parts is of the form $u_Xv_X$, and
\[
 \kappa(u_X)+\kappa(v_X)\ge2\eta\,m_X+1-\eta(2m_X-1)=1+\eta .
\]
A connected set $S$ meeting two parts contains two adjacent vertices in different parts, and hence both endpoints of such an edge; thus $\kappa(S)\ge1+\eta$.
\end{proof}

\begin{proof}[Proof of Theorem~\ref{thm:B}(b)]
Let $\mathcal P_1,\ldots,\mathcal P_n$ be connected partitions of $V(T)$ with $|\mathcal P_i|\le n$. If $|V(T)|\le n$, assigning the vertices to distinct indices gives a rainbow partition. Otherwise, refine every $\mathcal P_i$ to a connected partition $\mathcal P_i'$ with exactly $n$ parts by splitting parts along edges, as in the proof of Lemma~\ref{lem:exactly}. Let $\kappa_i$ be the cost given by Lemma~\ref{lem:costs} for $\mathcal P'_i$ and $k=n$. Then $\MMS_i(T,n)\le1-\eta$. By hypothesis, there is a connected MMS allocation $(B_i)$ for these costs. Every nonempty $B_i$ is connected with $\kappa_i(B_i)\le1-\eta$, so it lies in a member of $\mathcal P'_i$ and hence in a member of $\mathcal P_i$. By Proposition~\ref{prop:nonempty}, the pieces can moreover be chosen nonempty.
\end{proof}

\subsection{The packing counterpart for goods}
The following proposition is a combinatorial form of the last-diminisher argument of Bouveret et~al.~\cite{BouveretEtAl2017}. It explains why the case of goods on trees is easy.

\begin{proposition}
\label{prop:goods}
Let $T$ be a tree, let $k\ge1$, and let $\mathcal F_1,\ldots,\mathcal F_k$ be families, each consisting of at least $k$ pairwise disjoint connected subsets of $V(T)$. Then there is a connected partition $\{A_1,\ldots,A_k\}$ of $V(T)$, indexed by $[k]$, such that every $A_i$ contains a member of $\mathcal F_i$.
\end{proposition}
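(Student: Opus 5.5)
We will argue by induction on $k$, using the greedy ``deepest vertex'' step described in Section~\ref{sec:why}. For $k=1$ we take $A_1=V(T)$; it contains any member of $\mathcal F_1$, which is nonempty since it has at least one set. For $k\ge2$, root $T$ at an arbitrary vertex $\rho$ and write $T_v$ for the vertex set of the subtree rooted at $v$. Call $v$ \emph{good} if $T_v$ contains a member of $\bigcup_i\mathcal F_i$. The root is good, so we may choose a good vertex $v$ with no good child, say with $T_v\supseteq F$ for some $F\in\mathcal F_i$. Put $A_i=T_v$; this set is connected.

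The key claim is that every other family $\mathcal F_j$, $j\ne i$, has at most one member meeting $T_v$. Let $X\in\mathcal F_j$ meet $T_v$. Since $v$ has no good child, $X$ is not contained in any $T_u$ with $u$ a child of $v$. Hence either $v\in X$, or $X$ meets $T_v$ and leaves it. In the second case connectivity forces $X$ to contain the edge from $v$ to its parent, and therefore $v$ again. So every member of $\mathcal F_j$ meeting $T_v$ contains $v$. Members of $\mathcal F_j$ are pairwise disjoint, so there is at most one such member.

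Now let $T'=T-T_v$. If $v$ is not the root, $T'$ is a nonempty tree. For $j\ne i$, let $\mathcal F_j'$ consist of the members of $\mathcal F_j$ disjoint from $T_v$. These lie in $V(T')$, are connected in $T'$ (since $T'$ is an induced subtree), and number at least $k-1$. The induction hypothesis applied to $T'$, the $k-1$ indices $j\ne i$ and the families $\mathcal F_j'$ gives a connected partition $\{A_j\}_{j\ne i}$ of $V(T')$ with each $A_j$ containing a member of $\mathcal F_j$. Adding $A_i=T_v$ gives the desired partition of $V(T)$.

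The main obstacle is the degenerate case $v=\rho$, where $T'$ is empty. Here no proper subtree contains a member of any family, so every member of every family contains $\rho$. Then each $\mathcal F_j$ has at most one member, contradicting $|\mathcal F_j|\ge k\ge2$. So this case cannot occur for $k\ge2$, and the induction is complete.
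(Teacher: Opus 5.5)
Your proposal is correct and follows essentially the same argument as the paper. Like the paper, you take a minimal vertex $v$ whose subtree $T_v$ contains a member of some family $\mathcal F_i$ (the paper picks one of maximum depth), show that every member of any family meeting $T_v$ must contain $v$, deduce that $v\ne\rho$ and that every other family keeps at least $k-1$ members in $T-T_v$, and apply induction there with $A_i=T_v$.
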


\begin{proof}
We use induction on $k$; for $k=1$ take $A_1=V(T)$. Let $k\ge2$. Root $T$ at a vertex $\rho$, and for a vertex $v$ let $T_v$ be the set consisting of $v$ and its descendants. Choose a vertex $v$ of maximum depth such that $T_v$ contains a member of some family, say of $\mathcal F_i$. If $X$ is a member of any family that meets $T_v$ but does not contain $v$, then $X$ lies in a component of $T-v$ meeting $T_v$, that is, in $T_u$ for a child $u$ of $v$, contrary to the choice of $v$. Hence, every member of any family meeting $T_v$ contains $v$. In particular, since $\mathcal F_i$ has at least two disjoint members, $v\ne\rho$, and $T-T_v$ is a tree. Because the members of each family are pairwise disjoint, every family $\mathcal F_j$ with $j\ne i$ has at least $k-1$ members contained in $V(T)\setminus T_v$, which are connected in $T-T_v$. The induction hypothesis applied to these subfamilies on $T-T_v$ yields a connected partition $\{A_j:j\ne i\}$ of $V(T)\setminus T_v$, and we put $A_i=T_v$.
\end{proof}

For goods with monotone valuations, the maximin share of an agent is the maximum, over connected partitions of $V(T)$ into exactly $n$ parts, of the minimum value of a part (it is $0$ if $|V(T)|<n$). Applying Proposition~\ref{prop:goods} to the parts of partitions attaining the agents' maximin shares yields connected MMS allocations of goods on trees, which recovers the existence part of~\cite{BouveretEtAl2017}. Theorems~\ref{thm:A} and~\ref{thm:B} are the covering counterparts of Proposition~\ref{prop:goods}.

\section{Beyond trees}
\label{sec:beyond}

A very natural question is whether the conclusions of Theorem~\ref{thm:A} and Proposition~\ref{prop:goods} extend to graphs that are not trees. We show that the classification of such graphs reduces to the case of trees, and hence, by our results, is complete.

Throughout this section, $G$ is a finite connected graph. The notions of a connected set, a connected partition, and an allocation are as in Section~\ref{sec:results}, with $G$ in place of $T$. We say that $G$ has the \emph{rainbow partition property} if, for every $k\ge1$ and all connected partitions $\mathcal P_1,\ldots,\mathcal P_k$ of $V(G)$ with $|\mathcal P_i|\le k$, there is an allocation $(B_i)_{i\in[k]}$ of $V(G)$ in which every nonempty $B_i$ is connected and contained in a member of $\mathcal P_i$. We say that $G$ has the \emph{packing property} if, for every $k$ with $1\le k\le|V(G)|$ and all connected partitions $\mathcal P_1,\ldots,\mathcal P_k$ of $V(G)$ with exactly $k$ parts each, there is a connected partition $\{A_1,\ldots,A_k\}$ of $V(G)$, indexed by $[k]$, such that every $A_i$ contains a member of $\mathcal P_i$. By Theorem~\ref{thm:A} and Proposition~\ref{prop:goods}, every tree has both properties. As in the proofs of Theorem~\ref{thm:B}(a) and of the goods counterpart after Proposition~\ref{prop:goods}, the rainbow partition property yields connected MMS allocations of chores for monotone costs, and the packing property yields connected MMS allocations of goods for monotone valuations, where the benchmarks are defined by connected partitions of $V(G)$ exactly as for trees.

\subsection{Graphs with a long cycle}
The following lemma extends Proposition~\ref{prop:cycles}(ii) from cycles to all graphs containing a cycle of length at least four, and it treats goods as well.

\begin{lemma}
\label{lem:long-cycle}
If $G$ contains a cycle of length at least four as a subgraph, then there are connected partitions $\mathcal P_1,\mathcal P_2$ of $V(G)$ with two parts each for which there is neither a rainbow partition subordinate to $(\mathcal P_1,\mathcal P_2)$ nor a connected partition $\{A_1,A_2\}$ of $V(G)$ with $A_i$ containing a member of $\mathcal P_i$ for $i=1,2$. In particular, $G$ has neither the rainbow partition property nor the packing property.
\end{lemma}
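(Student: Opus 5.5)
The plan is to generalise the four-arc construction of Proposition~\ref{prop:cycles}(ii) and to absorb the vertices off the cycle without losing connectivity. Let $C$ be a cycle of $G$ of length at least four, and choose four distinct edges $e_1,f_1,e_2,f_2$ of $C$ in this cyclic order. Deleting them from $C$ leaves four nonempty vertex-disjoint arcs $X_1,X_2,X_3,X_4$, in cyclic order, where $X_j$ and $X_{j+1}$ (indices modulo $4$) are joined by one of the deleted edges. Chords of $C$ do no harm: only the edges of $C$ itself are used to certify connectivity below, and we never need any non-adjacency.

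\emph{Step 1 (extension to a partition of $V(G)$).} We grow the arcs into a partition $\{Y_1,Y_2,Y_3,Y_4\}$ of $V(G)$ with $X_j\subseteq Y_j$ and every $Y_j$ connected. Start with $Y_j=X_j$. While some vertex lies in no $Y_j$, connectivity of $G$ provides an unassigned vertex $u$ adjacent to some $Y_j$; add $u$ to one such $Y_j$. Each step preserves connectivity and disjointness, and the process ends with a partition of $V(G)$. Since $X_j$ and $X_{j+1}$ are joined by an edge of $C$, every union $Y_j\cup Y_{j+1}$ is connected. Put
\[
\mathcal P_1=\{Y_1\cup Y_2,\ Y_3\cup Y_4\},\qquad \mathcal P_2=\{Y_2\cup Y_3,\ Y_4\cup Y_1\}.
\]
These are connected partitions of $V(G)$ with exactly two parts each.

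\emph{Step 2 (no rainbow partition).} Each part of $\mathcal P_1$ and each part of $\mathcal P_2$ is a union of two cyclically consecutive sets $Y_j$. A part of $\mathcal P_1$ and a part of $\mathcal P_2$ always share exactly one $Y_j$, so together they cover only three of the four sets. If $(B_1,B_2)$ were a rainbow partition, then $B_1\cup B_2$ would lie in such a union. It would therefore miss a nonempty $Y_j$, which is a contradiction.

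\emph{Step 3 (no packing).} Suppose $\{A_1,A_2\}$ is a connected partition with $A_i$ containing a member of $\mathcal P_i$. Then $A_1\supseteq Y_a\cup Y_{a+1}$ and $A_2\supseteq Y_b\cup Y_{b+1}$, where $a$ is odd and $b$ is even. Two such index pairs always intersect, so $A_1\cap A_2\ne\varnothing$, which is a contradiction. The final assertion then follows by taking $k=2$. Here $|\mathcal P_i|=2\le k$ for the rainbow partition property, and $2\le|V(G)|$ for the packing property.

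The only step needing care is Step~1: the $Y_j$ must stay connected and the consecutive unions must remain connected. Both are ensured because the growth only attaches vertices through edges, and the cycle edges between consecutive arcs are never removed.
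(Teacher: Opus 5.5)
Your proof is correct and follows essentially the paper's route: split a cycle of length at least four into four arcs, build two bipartitions from pairs of cyclically consecutive arcs, and extend to $V(G)$ by attaching off-cycle vertices through edges. The only difference is minor: the paper extends the two bipartitions separately and argues via their traces on $V(C)$, whereas you extend a common four-block refinement $\{Y_1,\ldots,Y_4\}$, so the cycle argument carries over verbatim to $G$.
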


\begin{proof}
Let $C$ be a cycle of length $\ell\ge4$ in $G$, denote its consecutive vertices by $1,2,\ldots,\ell$, and put $m=\lfloor\ell/2\rfloor$, so that $2\le m\le\ell-2$. Define
\begin{align*}
 A_1&=\{1,\ldots,m\}, & A_2&=\{m+1,\ldots,\ell\},\\
 B_1&=\{2,\ldots,m+1\}, & B_2&=\{m+2,\ldots,\ell,1\}.
\end{align*}
Each of these four sets spans a path in $C$, so it is connected in $G$. They have the following two properties.
\begin{enumerate}[label=\textup{(\alph*)}]
\item Every set $A_j$ meets every set $B_{j'}$: indeed $2\in A_1\cap B_1$, $1\in A_1\cap B_2$, $m+1\in A_2\cap B_1$, and $\ell\in A_2\cap B_2$.
\item No set $A_j$ together with a set $B_{j'}$ covers $V(C)$: the unions $A_1\cup B_1$, $A_1\cup B_2$, $A_2\cup B_1$, and $A_2\cup B_2$ miss the vertices $\ell$, $m+1$, $1$, and $2$, respectively.
\end{enumerate}
Since $G$ is connected, we can extend $\{A_1,A_2\}$ and $\{B_1,B_2\}$ to connected partitions $\mathcal P_1=\{A_1',A_2'\}$ and $\mathcal P_2=\{B_1',B_2'\}$ of $V(G)$ by adding the vertices of $V(G)\setminus V(C)$ one at a time, each to a current part containing one of its neighbours. Then $A_j'\cap V(C)=A_j$ and $B_{j'}'\cap V(C)=B_{j'}$ for all $j,j'$.

Suppose that $(B_1^\circ,B_2^\circ)$ is a rainbow partition subordinate to $(\mathcal P_1,\mathcal P_2)$. Then $B_1^\circ\cap V(C)\subseteq A_j$ and $B_2^\circ\cap V(C)\subseteq B_{j'}$ for some $j,j'$, and by~(b) the union $B_1^\circ\cup B_2^\circ$ misses a vertex of $C$, a contradiction. Suppose next that $\{A_1^\circ,A_2^\circ\}$ is a connected partition with $A_1^\circ\supseteq A_j'$ and $A_2^\circ\supseteq B_{j'}'$ for some $j,j'$. Then $A_1^\circ\cap A_2^\circ\supseteq A_j\cap B_{j'}\ne\varnothing$ by~(a), again a contradiction.
\end{proof}

\subsection{Triangular cacti}
Lemma~\ref{lem:long-cycle} leaves exactly the connected graphs with no cycle of length at least four. A \emph{triangular cactus} is a connected graph in which every block (maximal connected subgraph without a cut vertex) is a single edge or a triangle; in particular, every tree is a triangular cactus, and in a triangular cactus, distinct triangles share no edge.

\begin{lemma}
\label{lem:no-long-cycle}
A connected graph $G$ contains no cycle of length at least four if and only if it is a triangular cactus.
\end{lemma}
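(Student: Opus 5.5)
We prove the two implications separately; both are purely graph-theoretic and use only the block structure of $G$.

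\emph{Triangular cactus $\Rightarrow$ no long cycle.} A cycle of $G$ is a $2$-connected subgraph, so it lies inside a single block. If every block is an edge or a triangle, then every cycle is contained in a triangle. A cycle with at least four vertices cannot fit inside a block with at most three vertices. Hence the only cycles are triangles.

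\emph{No long cycle $\Rightarrow$ triangular cactus.} Let $B$ be a block of $G$ with at least three vertices; we show $B$ is a triangle. Since $B$ is $2$-connected with $|V(B)|\ge3$, it contains a cycle, and by hypothesis every cycle of $B$ has length three. So $B$ contains a triangle $xyz$.

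Suppose $B$ had a fourth vertex $u$. By $2$-connectivity (Menger's theorem, or the fan lemma), there are two paths from $u$ to $\{x,y,z\}$ that meet only at $u$ and end at distinct vertices, say $x$ and $y$, with no interior vertices on the triangle. Concatenating these two paths with the path $x\,z\,y$ gives a cycle through $u,x,z,y$, which has length at least four. This contradicts the hypothesis, so $V(B)=\{x,y,z\}$.

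Moreover, $B$ contains no edges beyond those of the triangle, since a simple graph on three vertices has at most three edges. Thus every block is a single edge or a triangle, and $G$ is a triangular cactus.

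The only step needing care is the fan/Menger step, choosing the two disjoint paths from $u$ to the triangle. It is standard: in a $2$-connected graph, any vertex and any set of at least two vertices are joined by two paths that share only $u$. If needed, one can instead use an ear decomposition of $B$ starting from the triangle: the first ear added produces a cycle of length at least four. This is because an ear between two distinct triangle vertices, of length at least two, closes a cycle with the third vertex or with the direct edge. The one exception is an ear of length one, which would be a parallel edge and so cannot occur in a simple graph.
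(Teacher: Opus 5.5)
Your proof is correct and follows essentially the same route as the paper. For one direction you use that cycles lie in blocks; for the converse you apply a Menger/fan argument from a vertex outside a triangle of a $2$-connected block and close it up through the third triangle vertex, which gives a cycle of length at least four (you additionally spell out why a three-vertex block is exactly a triangle, which the paper leaves implicit).
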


\begin{proof}
Every cycle of a graph lies in one of its blocks, so a triangular cactus has only triangles as cycles. Conversely, suppose that $G$ has a block $H$ with at least four vertices. Then $H$ is $2$-connected. Let $C$ be a longest cycle of $H$; if $|C|\ge4$ we are done, so assume that $C=xyz$ is a triangle, and let $v$ be a vertex of $H$ outside $C$. By Menger's theorem, $H$ contains two paths from $v$ to $C$ that share only $v$ and end at distinct vertices of $C$, say $x$ and $y$. Together with the path $x\,z\,y$, they form a cycle of length at least four, a contradiction.
\end{proof}

\begin{lemma}
\label{lem:cacti}
Every triangular cactus has the rainbow partition property and the packing property.
\end{lemma}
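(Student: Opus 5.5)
Our plan is to reduce a triangular cactus $G$ to a tree $T$ on the same vertex set, so that the two properties for $G$ follow from Theorem~\ref{thm:A} and Proposition~\ref{prop:goods} applied to $T$. The key observation is that in a triangle every nonempty vertex set is connected. So a set $X\subseteq V(G)$ is connected in $G$ exactly when it is connected in the tree obtained by replacing each triangle $xyz$ with a suitable star, provided the centre is chosen according to $X$. A single fixed spanning tree would not work, since deleting one edge of each triangle can disconnect sets. Instead we use the standard auxiliary tree: add a new vertex $t_\Delta$ for every triangle $\Delta$, delete the three edges of $\Delta$, and join $t_\Delta$ to its three vertices. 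The bridges of $G$ remain edges. The result is a tree $T$ with vertex set $V(G)\cup\{t_\Delta\}$, because the block structure is preserved and each triangle becomes a star.

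For the rainbow partition property, let $\mathcal P_1,\ldots,\mathcal P_k$ be connected partitions of $V(G)$ with $|\mathcal P_i|\le k$. We lift each to a connected partition $\mathcal P_i^T$ of $V(T)$ with the same number of parts. A part $X$ that meets $\Delta$ in at least two vertices is connected only if those vertices are joined within $G[X]$, and any such path must use an edge of $\Delta$ because $\Delta$ is a block. In this case $t_\Delta$ joins $X$, and $X\cup\{t_\Delta\}$ is connected in $T$. Since distinct parts are disjoint, at most one part meets $\Delta$ in two or more vertices. If no part does, the three vertices of $\Delta$ lie in three different parts, and we add $t_\Delta$ to any one of them, which keeps that part connected via a single edge. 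Lifting in the same way shows that every connected set of $G$, enlarged by the hubs of the triangles it meets in at least two vertices, is connected in $T$. Theorem~\ref{thm:A} then gives an allocation $(B_i^T)$ of $V(T)$. We set $B_i=B_i^T\cap V(G)$ and must check that each nonempty $B_i$ is connected in $G$. A path in $T$ between two original vertices projects to a walk in $G$: a segment $x\,t_\Delta\,y$ becomes the edge $xy$ of $\Delta$. Hence the $G$-vertices of a $T$-connected set span a connected subgraph of $G$, provided the set contains at least one original vertex. The containment $B_i\subseteq$ a member of $\mathcal P_i$ is inherited. A piece $B_i^T$ consisting only of hub vertices has $B_i=\varnothing$, which is allowed.

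For the packing property, we lift each partition in the same way, so that every part of $\mathcal P_i^T$ is connected in $T$ and there are exactly $k$ disjoint parts. Proposition~\ref{prop:goods} gives a connected partition $\{A_i^T\}$ of $V(T)$, and we put $A_i=A_i^T\cap V(G)$. Each $A_i$ contains a lifted member's intersection with $V(G)$, which is the original member, so $A_i$ is nonempty and, by the projection argument, connected in $G$.

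We expect the main obstacle to be confirming that the lifting is well defined and that connectivity transfers in both directions through the hubs. This rests on the block argument that a connected $X$ meeting a triangle in two vertices must use a triangle edge. We will verify it using the fact that every path between two vertices of a block stays inside that block.
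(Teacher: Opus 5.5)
Your proposal is correct and follows essentially the same route as the paper: the same auxiliary tree with a hub $t_\Delta$ for each triangle, the same lifting of partitions ($t_\Delta$ goes to the unique part meeting $\Delta$ in at least two vertices, otherwise to any part meeting it), and the same projection of $T$-paths back to $G$-walks, followed by Theorem~\ref{thm:A} and Proposition~\ref{prop:goods}. The only difference is that the paper shows $T$ is a tree by an edge count ($|E(G)|=|V(G)|-1+q$, so $|E(T)|=|V(T)|-1$) rather than by appealing to the block structure, and it argues connectivity of lifted parts directly edge by edge, without your observation that paths between vertices of a block stay in the block.
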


\begin{proof}
Let $G$ be a triangular cactus, and let $q$ be the number of its triangles. Let $T$ be the graph obtained from $G$ by adding, for every triangle $xyz$, a new vertex $t_{xyz}$ adjacent to $x$, $y$, and $z$, and deleting the edges $xy$, $xz$, and $yz$. Every deleted edge $xy$ is replaced by the path $x\,t_{xyz}\,y$, so $T$ is connected. Since every cycle of $G$ lies in a block, the cycles of $G$ are exactly its $q$ triangles, and each block that is a triangle has one edge more than a tree on its vertices; hence $|E(G)|=|V(G)|-1+q$. Hence $|E(T)|=|E(G)|=|V(T)|-1$, and $T$ is a tree.

Let $\mathcal P$ be a connected partition of $V(G)$. We extend it to a partition $\mathcal P'$ of $V(T)$ as follows. For every triangle $xyz$, at most one member of $\mathcal P$ contains two or more of the vertices $x,y,z$, since the members are disjoint. If such a member exists, we add $t_{xyz}$ to it. Otherwise, we add $t_{xyz}$ to an arbitrary member containing one of $x,y,z$. Every member $P$ of $\mathcal P$ thus gives a member $P'$ of $\mathcal P'$ with $P'\cap V(G)=P$, and $|\mathcal P'|=|\mathcal P|$. Each $P'$ is connected in $T$: every new vertex of $P'$ is adjacent in $T$ to a vertex of $P$, every edge of $G[P]$ not belonging to a triangle is an edge of $T$, and if an edge $xy$ of a triangle $xyz$ lies in $G[P]$, then $P$ contains two vertices of the triangle, so $t_{xyz}\in P'$ and the path $x\,t_{xyz}\,y$ lies in $T[P']$.

Conversely, if $R'\subseteq V(T)$ is connected in $T$, then $R=R'\cap V(G)$ is empty or connected in $G$. Indeed, a path of $T[R']$ between two vertices of $R$ visits a new vertex $t_{xyz}$ only between two of its neighbours $x,y,z$, which are adjacent in $G$; replacing every such detour by the corresponding edge of $G$ gives a walk in $G[R]$.

Now let $\mathcal P_1,\ldots,\mathcal P_k$ be connected partitions of $V(G)$ with $|\mathcal P_i|\le k$, and let $\mathcal P'_1,\ldots,\mathcal P'_k$ be the extended partitions of $V(T)$, which satisfy $|\mathcal P'_i|\le k$. By Theorem~\ref{thm:A} there is a rainbow partition $(R'_i)_{i\in[k]}$ of $V(T)$ subordinate to $(\mathcal P'_i)$. Put $R_i=R'_i\cap V(G)$. These sets form an allocation of $V(G)$, every nonempty $R_i$ is connected in $G$, and if $R'_i\subseteq P'$ with $P'\in\mathcal P'_i$, then $R_i\subseteq P'\cap V(G)$, which is a member of $\mathcal P_i$. Hence $G$ has the rainbow partition property.

For the packing property, let $k\le|V(G)|$ and let each $\mathcal P_i$ have exactly $k$ parts. The extended partitions $\mathcal P'_i$ of $V(T)$ have exactly $k$ parts, which are pairwise disjoint connected sets of $T$. Proposition~\ref{prop:goods}, applied to the families $\mathcal P'_1,\ldots,\mathcal P'_k$, gives a connected partition $\{A'_1,\ldots,A'_k\}$ of $V(T)$ with $A'_i\supseteq P'$ for some $P'\in\mathcal P'_i$. Then $A_i=A'_i\cap V(G)$ contains the member $P'\cap V(G)$ of $\mathcal P_i$, so it is nonempty and, by the previous paragraph, connected in $G$. The sets $A_i$ form the required connected partition of $V(G)$.
\end{proof}

\subsection{The characterization}
Combining Lemmas~\ref{lem:long-cycle}, \ref{lem:no-long-cycle}, and~\ref{lem:cacti}, we obtain a complete answer.

\begin{theorem}
\label{thm:cacti}
For a connected graph $G$, the following statements are equivalent.
\begin{enumerate}
\item $G$ is a triangular cactus.
\item $G$ contains no cycle of length at least four.
\item $G$ has the rainbow partition property.
\item $G$ has the packing property.
\end{enumerate}
\end{theorem}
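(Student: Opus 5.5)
The four statements are linked by the three lemmas just proved, so the proof only has to arrange them in a cycle of implications. I would establish (i)$\Leftrightarrow$(ii), then (i)$\Rightarrow$(iii) and (i)$\Rightarrow$(iv), and finally close with (iii)$\Rightarrow$(ii) and (iv)$\Rightarrow$(ii).

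\emph{Step 1: (i)$\Leftrightarrow$(ii).} This is exactly Lemma~\ref{lem:no-long-cycle}, which applies because $G$ is connected.

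\emph{Step 2: (i)$\Rightarrow$(iii) and (i)$\Rightarrow$(iv).} Both are given by Lemma~\ref{lem:cacti}. That lemma transfers Theorem~\ref{thm:A} and Proposition~\ref{prop:goods} from the tree $T$, obtained by replacing each triangle with a claw, back to $G$.

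\emph{Step 3: (iii)$\Rightarrow$(ii) and (iv)$\Rightarrow$(ii).} I would argue by contraposition. Suppose $G$ contains a cycle of length at least four. Then Lemma~\ref{lem:long-cycle} produces two connected partitions with two parts each that admit no rainbow partition, which violates (iii) with $k=2$. The same two partitions admit no packing partition $\{A_1,A_2\}$. This violates (iv) with $k=2$, and $k=2$ is allowed because $|V(G)|\ge4\ge2$.

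Chaining these gives (iii)$\Rightarrow$(ii)$\Rightarrow$(i)$\Rightarrow$(iii), and the same chain with (iv) in place of (iii). Hence all four statements are equivalent.

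There is no real obstacle, since every ingredient is already available. The only points to check are that the hypotheses match in the definitions: the packing property requires $k\le|V(G)|$, and the partitions in Lemma~\ref{lem:long-cycle} have exactly two parts. Both are immediate.
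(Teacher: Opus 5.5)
Your proposal is correct and follows the paper's proof exactly: (i)$\Leftrightarrow$(ii) by Lemma~\ref{lem:no-long-cycle}, (i)$\Rightarrow$(iii) and (i)$\Rightarrow$(iv) by Lemma~\ref{lem:cacti}, and (iii)$\Rightarrow$(ii) and (iv)$\Rightarrow$(ii) by Lemma~\ref{lem:long-cycle}. Your check that $k=2\le|V(G)|$ is admissible in the packing property is a harmless extra detail that the paper leaves implicit.
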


\begin{proof}
Statements (i) and (ii) are equivalent by Lemma~\ref{lem:no-long-cycle}. Statement (i) implies (iii) and (iv) by Lemma~\ref{lem:cacti}, and each of (iii) and (iv) implies (ii) by Lemma~\ref{lem:long-cycle}.
\end{proof}

In terms of fair division, let $G$ be a connected graph of chores and let $\MMS_i(G,n)$ be defined by~\eqref{eq:intro-mms} with connected partitions of $V(G)$ into at most $n$ parts. If $G$ is a triangular cactus, then connected MMS allocations of chores exist for all monotone costs, by Theorem~\ref{thm:cacti} and the proof of Theorem~\ref{thm:B}(a). If $G$ is not a triangular cactus, take the partitions $\mathcal P_1,\mathcal P_2$ of Lemma~\ref{lem:long-cycle} and, for $n=2$, the monotone costs $\kappa_i(X)=0$ if $X$ is contained in a member of $\mathcal P_i$ and $\kappa_i(X)=1$ otherwise. Then $\MMS_i(G,2)=0$, and a connected MMS allocation would be a rainbow partition subordinate to $(\mathcal P_1,\mathcal P_2)$, which does not exist. Thus, a connected graph admits connected MMS allocations of chores for all monotone costs and all numbers of agents if and only if it is a triangular cactus; the analogous statement for goods follows in the same way from the packing property.

\section{Algorithms}
\label{sec:algorithm}

The proof of Theorem~\ref{thm:weights} is not constructive. Nevertheless, a rainbow partition can be found by a finite search whose superpolynomial part depends only on the number of colours. We use the standard RAM model and assume that the tree and the edge sets (or the partitions) are given explicitly.

\begin{theorem}
\label{thm:algorithm}
Given a tree $T$ and edge sets $E_1,\ldots,E_k$ with $|E_i|\le k-1$, a solution as in Theorem~\ref{thm:Aprime} can be computed in time $k^{O(k)}+\poly(|V(T)|,k)$. The same holds for a rainbow partition subordinate to connected partitions $\mathcal P_1,\ldots,\mathcal P_k$ with $|\mathcal P_i|\le k$.
\end{theorem}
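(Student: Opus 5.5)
We follow the inductive proof of Theorem~\ref{thm:Aprime} and make each step effective. The only non-constructive ingredient is Theorem~\ref{thm:C}, which we use as an existence guarantee inside a brute-force search on a tree with only $k$ vertices. The plan has three stages. First, preprocess in polynomial time. Second, recurse on the number of colours, solving a Theorem~\ref{thm:C}-type search on the quotient tree at each level. Third, lift the solutions back through the normalisation.

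For preprocessing, given partitions $\mathcal P_i$, we compute $E_i$ as the set of edges joining distinct members, as in Proposition~\ref{prop:equivalence}. We then apply Lemma~\ref{lem:normalization}(i), which takes polynomial time and returns an instance $\widehat T$ with singly coloured edges; the lift-back of its solutions is also polynomial. Next we saturate as in Lemma~\ref{lem:normalization}(ii) by adding pendant edges. After contracting uncoloured edges, the tree has at most $k(k-1)+1$ vertices, so saturation costs only $\poly(k)$ and the saturated tree has size $O(k^2)$. From then on, everything runs on a tree whose size depends only on $k$. At the end we restrict to $\widehat T$ and lift back.

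For the recursion on a saturated instance with colour set $C$ of size $k$, we fix colour $0$ and compute $\mathcal H=\Comp(T-E_0)$, the quotient $Q$ on $k$ vertices, and the counts $e_c(H)$ of \eqref{eq:counts}. We then enumerate all candidates $(\{Z_j\},(G_j))$. A candidate is a connected partition of $V(Q)$ with $r\ge2$ parts, which is determined by a nonempty subset of the $k-1$ edges of $Q$, giving at most $2^{k-1}$ choices. Together with it we take an assignment of colours to parts with $|G_j|=|Z_j|$, giving at most $k!$ choices. For each candidate we check the condition $e_c(Z_j)\le|Z_j|-1$ for all $c\in G_j\setminus\{0\}$, which takes $\poly(k)$ time.

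Corollary~\ref{cor:integer}, which rests on Theorem~\ref{thm:C}, guarantees that some candidate passes. This is the key point: the search is not constructive in a clever way, but it is guaranteed to succeed. For the passing candidate we form the local instances on $T_j=T[\pi^{-1}(Z_j)]$. These are admissible, as shown in the proof of Theorem~\ref{thm:Aprime}. We renormalise each one (saturating it again if needed) and recurse, then combine the pieces as in that proof.

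For the running time, one level costs $2^{k}k!\,\poly(k)=k^{O(k)}$. The recursion tree has at most $k$ leaves, because the group sizes partition $k$ and every group is a proper subset. It therefore has $O(k)$ nodes, and each node handles at most $k$ colours. Summing gives $k\cdot k^{O(k)}=k^{O(k)}$. Adding the polynomial preprocessing and lifting yields $k^{O(k)}+\poly(|V(T)|,k)$.

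The step needing most care is ensuring that each recursive call works on a tree whose size depends only on $k$ and not on $|V(T)|$. This holds if we work with the saturated instance from the start: every subtree $T_j$ has at most $k(k-1)+1$ vertices, and re-saturation only adds pendant edges. Without this, the local instances could carry polynomial-size trees through up to $k$ levels, but even that stays within the stated bound, since each level is polynomial apart from the $k^{O(k)}$ search.
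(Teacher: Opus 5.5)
Your algorithm is correct, but it is organised differently from the paper's. The paper does not unroll the induction. After Lemma~\ref{lem:normalization}(i) the tree has at most $k(k-1)$ singly coloured edges, and the paper enumerates all edge sets $F$ with $|F|\le k-1$ (at most $k^{2k-1}$ of them). For each $F$ it tests, by a bipartite matching, whether the components of $\widehat T-F$ can be assigned to distinct colours none of whose edges they contain. Theorem~\ref{thm:Aprime} is used purely as a black box guaranteeing that some $F$ succeeds. You instead make the inductive proof effective. At each of the at most $2k-1$ nodes of the recursion you brute-force the splitting of Corollary~\ref{cor:integer} on the $k$-vertex quotient tree $Q$, relying on Corollary~\ref{cor:integer} (hence Theorem~\ref{thm:C}) to guarantee that some candidate passes, and you renormalise, recurse and lift back at every node. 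The paper's version is shorter, needs no per-level bookkeeping, and does not depend on how Theorem~\ref{thm:Aprime} was proved. Yours uses smaller individual searches: at most $2^{k-1}k!$ candidates on a $k$-vertex tree, with no matching step. More usefully, it isolates the only superpolynomial step. A polynomial-time procedure for the splitting of Corollary~\ref{cor:integer} would give, through $O(k)$ calls and otherwise polynomial work, a polynomial-time algorithm for rainbow partitions; the flat search provides no such reduction.
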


\begin{proof}
For $k=1$ the problem is trivial, so let $k\ge2$. Apply Lemma~\ref{lem:normalization}(i). The resulting tree $\widehat T$ has singly coloured edges, and it has
\[
 N=|E(\widehat T)|=\sum_{c}|\widehat E_c|\le k(k-1)
\]
edges; thus, normalisation produces, in polynomial time, an equivalent instance of size bounded by a function of $k$ alone. If $N=0$, then $\widehat T$ has a single vertex, which forms a solution by itself, so let $N\ge1$. A connected partition of $V(\widehat T)$ into $q$ parts is determined by the set $F$ of $q-1$ edges between its parts (Lemma~\ref{lem:tree}(iii),(iv)). For every $F\subseteq E(\widehat T)$ with $|F|\le k-1$, form the bipartite graph between $\CC F=\Comp(\widehat T-F)$ and the colours in which $Z\in\CC F$ is adjacent to $c$ if $E(\widehat T[Z])\cap\widehat E_c=\varnothing$, and test whether it has a matching covering $\CC F$. Such a matching yields a solution, with empty sets for unmatched colours. Conversely, the nonempty sets of any solution, which exists by Theorem~\ref{thm:Aprime}, form a connected partition into $q\le k$ parts; its set $F$ of boundary edges has $q-1\le k-1$ elements, $\CC F$ consists of exactly these parts, and the solution defines a matching covering $\CC F$. Hence, the search succeeds. The number of candidate sets is
\[
 \sum_{j=0}^{k-1}\binom Nj\le k\,N^{k-1}\le k^{2k-1},
\]
and each is processed in time polynomial in $k$, for instance with the Hopcroft--Karp algorithm~\cite{HopcroftKarp1973}. Transferring the solution back to $T$ takes polynomial time by Lemma~\ref{lem:normalization}(i). For input partitions, first compute the edge sets $E_i$ of edges joining distinct members of $\mathcal P_i$, as in Proposition~\ref{prop:equivalence}.
\end{proof}

\begin{corollary}
\label{cor:mms-algorithm}
Given a tree $T$ and nonnegative rational additive costs $\kappa_1,\ldots,\kappa_n$ on $V(T)$, a connected MMS allocation, with nonempty bundles if $|V(T)|\ge n$, can be computed in time $n^{O(n)}$ plus a polynomial in the input length. In particular, it can be computed in polynomial time for every fixed number of agents.
\end{corollary}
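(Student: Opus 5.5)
The plan is to reduce Corollary~\ref{cor:mms-algorithm} to Theorem~\ref{thm:algorithm}. The only extra work is computing, for each agent, a benchmark partition attaining $\MMS_i(T,n)$.

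First handle the trivial case $|V(T)|<n$. Here we assign the vertices to distinct agents as singletons and leave the remaining bundles empty. Each bundle then has cost at most the cost of a single vertex. By Lemma~\ref{lem:exactly}, $\MMS_i(T,n)$ is the minimum over connected allocations of the maximum bundle cost, and the vertex-singleton allocation attains the smallest possible maximum, since every allocation contains each vertex in some bundle. So this allocation is an MMS allocation.

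Now assume $|V(T)|\ge n$. For each agent $i$ we compute a partition $\mathcal P_i$ into exactly $n$ connected parts attaining $\MMS_i(T,n)$; by Lemma~\ref{lem:exactly} this is the min--max tree partitioning problem. I would invoke the polynomial-time algorithm of Becker, Schach, and Perl or Frederickson, which returns an optimal partition and not only its value. To avoid depending on that detail, there is a self-contained alternative:
\begin{enumerate}
\item Observe that the optimal value is $\kappa_i(P)$ for some connected set $P$, and a greedy bottom-up check decides, for a threshold $\theta$, whether a partition into at most $n$ connected parts of cost at most $\theta$ exists.
\item In the check, process the vertices in postorder. Keep the current component of each vertex, and cut off children's residual components in decreasing order of cost whenever the threshold would be exceeded.
\item Binary search or parametric search over candidate values then yields an optimal partition.
\end{enumerate}
All of this is polynomial in the input length because the costs are rational.

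Then apply Theorem~\ref{thm:algorithm} with $k=n$ to $\mathcal P_1,\ldots,\mathcal P_n$, which runs in time $n^{O(n)}+\poly(|V(T)|,n)$. This gives a rainbow partition $(B_i)$ with each nonempty $B_i$ contained in a part of $\mathcal P_i$. By additivity and nonnegativity, $\kappa_i(B_i)\le\MMS_i(T,n)$, exactly as in the proof of Theorem~\ref{thm:B}(a). Finally, make all bundles nonempty using the procedure of Proposition~\ref{prop:nonempty}. That procedure moves a leaf of a piece with at least two vertices into an empty piece, and it terminates after at most $n$ polynomial-time steps.

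The main obstacle is the first step, obtaining optimal benchmark partitions in time polynomial in the input length with rational costs; the remaining steps are direct applications of earlier results. I would cite the known tree-partitioning algorithms as producing the partition itself, noting that the value-to-partition step is standard.
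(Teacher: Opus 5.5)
Your proposal is correct and follows the paper's proof: singletons when there are fewer vertices than agents, then Becker--Schach--Perl (or Frederickson), via Lemma~\ref{lem:exactly}, to obtain benchmark partitions attaining $\MMS_i(T,n)$, then Theorem~\ref{thm:algorithm} and Proposition~\ref{prop:nonempty}. The self-contained alternative you sketch is not needed, since the paper simply cites the known algorithm. If you keep it, note that the connected sets can be exponentially many, so it works as a Kundu--Misra-style greedy check combined with binary search over multiples of a common denominator of the costs, not over the connected sets themselves.
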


\begin{proof}
If $|V(T)|\le n$, give the vertices to distinct agents. This is an MMS allocation: every partition in $\Pi_n(T)$ has a part containing a vertex of maximum cost for agent $i$, so $\MMS_i(T,n)$ is at least the largest vertex cost $\max_v\kappa_i(v)$. If $|V(T)|>n$, then by Lemma~\ref{lem:exactly} every optimal solution of the min--max tree partitioning problem with vertex weights $\kappa_i$ and exactly $n-1$ deleted edges is a partition in $\Pi_n(T)$ attaining $\MMS_i(T,n)$. Such a solution can be computed in polynomial time by the shifting algorithm of Becker, Schach, and Perl~\cite{BeckerEtAl1982}; see also~\cite{Frederickson1991}. Apply Theorem~\ref{thm:algorithm} to the $n$ partitions obtained, and then Proposition~\ref{prop:nonempty}; the proof of Theorem~\ref{thm:B}(a) shows that the result is an MMS allocation.
\end{proof}

\begin{remark}
\label{rem:tfnp}
A rainbow partition always exists and can be verified in polynomial time, so the search problem of finding one belongs to the class TFNP of total search problems in NP~\cite{MegiddoPapadimitriou1991}. The same holds for connected MMS allocations of additive chores on trees, since the benchmarks can be computed in polynomial time (see the proof of Corollary~\ref{cor:mms-algorithm}). We do not know whether these problems can be solved in time polynomial in $|V(T)|$ and $k$, or whether they belong to the class PPAD~\cite{Papadimitriou1994}, which contains the search problems associated with Sperner's lemma. The labellings in the proof of Theorem~\ref{thm:weights} are discontinuous, and the limit argument provides no bound on the fineness of an approximation that would suffice.
\end{remark}

\section{Open problems}
\label{sec:open}

\begin{problem}[Complexity]
Can a rainbow partition subordinate to $k$ connected partitions of a tree, or a connected MMS allocation of additive chores on a tree, be computed in time polynomial in the size of the tree and the number of agents? Is the search problem in PPAD, or hard for it? By Lemma~\ref{lem:normalization}(i), it suffices to treat singly coloured instances with at most $k(k-1)$ edges.
\end{problem} 

\begin{problem}[Graphs with long cycles]
By Theorem~\ref{thm:cacti}, the conclusion of Theorem~\ref{thm:A} holds for all $k$ and all connected partitions into at most $k$ parts exactly on triangular cacti. For other connected graphs, one may ask for a weaker bound on the number of parts: by Proposition~\ref{prop:cycles}, every cycle satisfies the conclusion under the bound $k-1$. Does a bound of the form $k-g(G)$, depending on the cyclomatic number or on another parameter of $G$, suffice in general? Analogous questions for approximate connected MMS allocations of chores on graphs with cycles are open as well; for three agents on cycles, $7/6$-approximate connected MMS allocations of chores were obtained in~\cite{XiaoEtAl2023arXiv}.
\end{problem}

\begin{problem}[Unequal entitlements]
Theorem~\ref{thm:A} concerns $k$ partitions, each with at most $k$ parts, for which $\sum_i1/|\mathcal P_i|\ge1$. The latter condition does not suffice in general: for the path with colour word $3231323$, the partitions $\mathcal P_c=\Comp(T-E_c)$ have $2$, $3$, and $5$ parts for $c=1,2,3$, so that $\frac12+\frac13+\frac15=\frac{31}{30}>1$, but every ordering of $\{1,2,3\}$ is a subsequence of the word, and by Proposition~\ref{prop:paths} there is no covering by one part of each partition. Which conditions on the numbers $|\mathcal P_i|$ guarantee a rainbow partition? Such conditions would bear on connected versions of weighted maximin shares for chores.
\end{problem}

\noindent\textbf{AI disclosure.} The project was initiated by Lonc, who presented to the remaining authors the conjecture that Theorem~\ref{thm:A} should hold. Then the authors worked (mostly independently) and obtained several partial results supporting the conjecture. The final steps of the proof were aided by generative AI. The authors take full responsibility for the contents of the paper.

\bibliographystyle{amsplain}
\bibliography{references}

\end{document}